\newtheorem{theorem}{Theorem}
\newtheorem{lemma}[theorem]{Lemma}
\newtheorem{proposition}{Proposition}
\newtheorem{corollary}[theorem]{Corollary}
\theoremstyle{definition}
\newtheorem{remark}{Remark}
\newtheorem{example}{Example}
\newcommand{\Diff}{\mathcal{D}}
\newcommand{\Diffmu}{\Diff_{\mu}}
\newcommand{\id}{\mathrm{id}}
\DeclareMathOperator{\sgrad}{sgrad}
\newcommand{\Laplacian}{\Delta}
\newcommand{\grad}{\nabla}
\DeclareMathOperator{\diver}{div}
\newcommand{\normal}{\nu}
\DeclareMathOperator{\Jac}{Jac}
\DeclareMathOperator{\curl}{curl}
\DeclareMathOperator{\cn}{cn}
\DeclareMathOperator{\sn}{sn}
\DeclareMathOperator{\dn}{dn}
\DeclareMathOperator{\zn}{zn}
\newcommand{\ellipticE}{\mathbf{E}}
\newcommand{\ellipticK}{\mathbf{K}}
\newcommand{\greekfcn}{\zeta}
\title{Conjugate point criteria on the area-preserving diffeomorphism group}
\author{Stephen C. Preston}
\begin{document}

\maketitle

\begin{abstract}
This paper answers some questions about conjugate points along the geodesics corresponding to steady 2D Euler flows, posed by a paper of Drivas-Misio{\l}ek-Shi-Yoneda. We present a new sufficient criterion for the existence of conjugate points, which improves on the criterion of Misio{\l}ek. It applies in any rotational cell of a steady 2D Euler flow, and in case of rotational symmetry it captures all known conjugate points. We give a general construction of the surfaces that admit a steady fluid with given area form, velocity profile, and vorticity profile, and from this we show how to detect conjugate points in a single rotational cell of a steady flow. When the velocity profile has a local extremum, the criterion becomes particularly simple. Several examples are provided, and in an appendix we use the Misio{\l}ek criterion to give some new examples of conjugate points along Kolmogorov flows on the torus.
\end{abstract}

\tableofcontents

\section{Introduction}\label{introsection}

Suppose $M$ is a two-dimensional compact surface, possibly curved and possibly with boundary, with a Riemannian metric
inducing an area form $\mu$ and a Poisson bracket $\{ \cdot, \cdot\}$. If $f\colon M\to \mathbb{R}$ satisfies the PDE
\begin{equation}\label{steadyeuler}
\{f, \Laplacian f\}= 0,
\end{equation}
then $f$ generates
a steady solution of the 2D Euler equation for ideal fluids, via $U = \sgrad f$. The velocity field $U$ in turn generates
a geodesic $\eta$ defined by
\begin{equation}\label{flowdef}
\frac{\partial \eta}{\partial t}(t,p) = U(\eta(t,p)), \qquad \eta(0,p)=p,
\end{equation}
and $\eta(t)$ a curve in the group of area-preserving diffeomorphisms $\Diffmu(M)$ with $\eta(0) = \id$.
In this paper we are interested in the question of whether there are conjugate points along this geodesic, i.e.,
whether $\eta(T)$ is conjugate to $\eta(0)$ for some $T>0$. This is closely related to the question of whether the
geodesic is minimizing between its endpoints, and is also related to the question of the stability of particle
trajectories.

Misio{\l}ek~\cite{Mis1993} was the first to find conjugate points in $\Diffmu(M)$ for the case $M=S^2$, along a rigid rotation (along with conjugate points on the $3$-ball and the $3$-sphere), taking advantage of the fact that $S^2$, $S^3$, and the spheres making up the $3$-ball all have conjugate points along them. A more difficult problem is to find conjugate points along a fluid geodesic when $M$ itself does not have conjugate points; this was solved soon after by Misio{\l}ek as well~\cite{misiolekconjugate}, who found them along the geodesic which comes from stream function $f(x,y) = \cos{6x} \cos{2y}$ on the torus $\mathbb{T}^2$. The method he used in this paper generalizes to give a technique for finding conjugate points along any steady flow, and is based on the well-known criterion in Riemannian geometry~\cite{docarmo}: if for some $T>0$ there is a vector field $\mathbf{Y}\colon [0,T]\to \Diffmu(M)$ with $\mathbf{Y}(0)=\mathbf{Y}(T)=0$ such that the index form
\begin{equation}\label{generalindexform}
I(\mathbf{Y},\mathbf{Y}) = \int_0^T \left\langle \frac{D\mathbf{Y}}{dt}, \frac{D\mathbf{Y}}{dt}\right\rangle + \langle \mathbf{R}(\mathbf{Y}, \dot{\eta})\dot{\eta}, \mathbf{Y}\rangle \, dt
\end{equation}
is negative, then there is a Jacobi field $J(t)$ vanishing at both $t=0$ and $t=\tau$ for some $\tau<T$, and hence $\eta(\tau)$ is conjugate to $\eta(0)$.

By considering variation fields of the form $\mathbf{Y}(t) = Y(t)\circ\eta(t)$ with $Y(t) = \sin{(\tfrac{t}{T})} W$, where $W$ is a time-independent vector in $T_{\id}\Diffmu(M)$ (i.e., a divergence-free vector field on $M$), Misio{\l}ek showed that negativity of the index form $I(\mathbf{Y},\mathbf{Y})$ for sufficiently large $T$ is implied by positivity of the following quantity:
\begin{equation}\label{misiolekcurvature}
MC(W,W) = \int_M \langle \nabla_{[U,W]}U + \nabla_U[U,W], U\rangle \, d\mu.
\end{equation}
Given a particular steady solution $U$ of the 2D Euler equation (for example $U=\sgrad f$ as above), finding a single divergence-free $W$ which makes $MC(W,W)>0$ is sufficient to prove that there are eventually conjugate points along the geodesic $\eta$ which is the flow of $U$. This is far easier than actually solving the Jacobi equation to find the conjugate points explicitly. The quantity $MC(W,W)$ is called ``Misio{\l}ek curvature'' by Tauchi and Yoneda (see \cite{tauchiyonedacoriolis}--\cite{tauchiyonedaarnold}) and the condition $MC(W,W)>0$ called the M-criterion (see \cite{tauchiyonedaellipsoid}). This criterion has been successfully applied to find conjugate points along many steady flows, and even along nonsteady Rossby-Haurwitz waves by Benn~\cite{benn}.

Nonetheless it is quite far from being a complete characterization. One drawback is that the test fields $Y$ are not necessarily related in any obvious way to the given steady field $U$; in fact the easiest way to apply the M-criterion is to guess the form of $Y$ based on some parameters and solve for them numerically. A bigger drawback is that although the criterion applies for certain rotationally symmetric surfaces like spheres with a bulge around the equator, it does not apply to rigid rotation on the standard sphere, which is the only case where we know all conjugate points and Jacobi fields explicitly. (See Example \ref{sphericaljacobis} below for details.) We can compute that $MC(W,W)\le 0$ for every divergence-free field $W$, yet we can show by direct construction that the rotation $\eta(\tau)$ is conjugate to the identity for $\tau = \frac{n(n+1)\pi}{k}$ whenever $k,n\in\mathbb{N}$ with $k\le n$. Essentially the difference is that instead of choosing $Y(t) = \sin{(\tfrac{t}{T})} \, W$, we choose $Y(t) = \sin{(\tfrac{t}{T})}\, \eta(\alpha t)_*W$, where we push-forward the velocity field by the Lagrangian flow for some constant $\alpha$.

It turns out that incorporating this additional drift term gives a much stronger condition, which captures many more conjugate points along two-dimensional steady flows. Moreover we obtain a simpler criterion which only depends on a single function. The idea is that we can work things out fairly well in the rotationally symmetric case, and the Jacobi field that generates the earliest conjugate point will have a $\theta$-dependence of the form $\cos{\theta}$; higher frequencies generate conjugate points happening later in time (but the frequency needs to be nonzero to generate any conjugate points). Even those steady flows which are not rotationally symmetric still generically have closed trajectories, and split $M$ into cells where they either rotate clockwise or counterclockwise. We will construct a variation field supported in one of these cells, with a low-frequency oscillation term and a time-dependent drift.

Now let us make this precise. We suppose $U$ is a steady solution of the Euler equation with $U=\sgrad f$ for a stream function $f$. Suppose $f$ has a nondegenerate local minimum at a point $p_0\in M$ (i.e., its Hessian is positive definite), and that $f$ has no other critical points and remains negative in the interior of a cell $C$, while $f=0$ on the boundary $\partial C$. (A stream function is only determined up to a constant, so this is no loss of generality.) The basic examples we have in mind are a function $f$ that depends solely on the radius on a rotationally symmetric manifold, and the Kolmogorov flows generated by Laplacian eigenfunctions $f(x,y) = -\cos{mx} \cos{ny}$ on the $2$-torus.
Within the cell $C$, the flow $\eta$ of $U$ consists of counterclockwise rotation along level sets of $f$ with finite period depending only on the value of $f$.

\begin{figure}[!ht]
\centering
\includegraphics[scale=0.35]{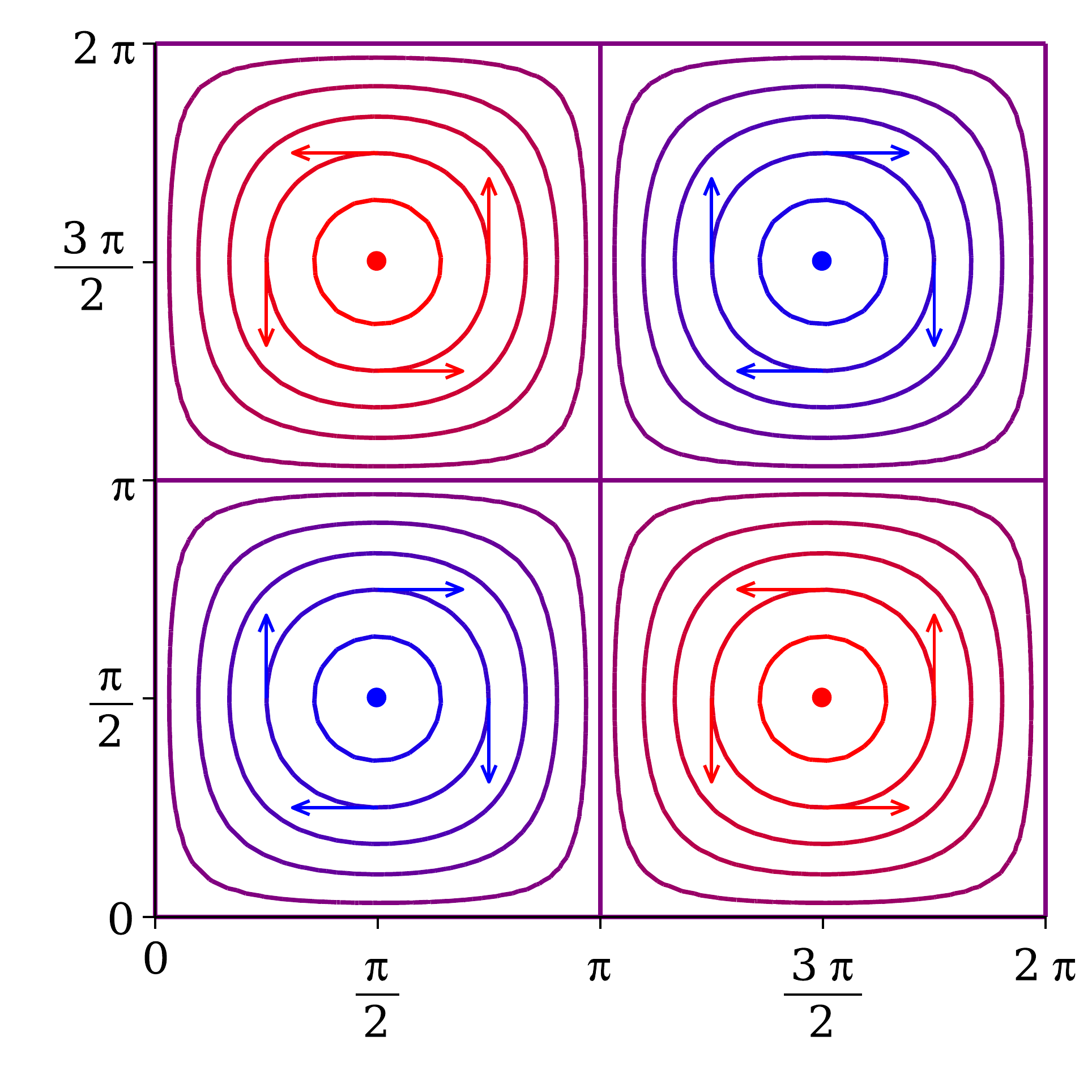}
\caption{Level sets of the function $f(x,y) = \sin{x}\sin{y}$, the simplest Kolmogorov flow on the torus. Red curves are negative values of $f$, while blue curves are positive values. The fluid rotates in each of the four cells independently, while the corner points are all hyperbolic fixed points.}
\end{figure}


We will show in Lemma \ref{polarcoordsgeneral} that in this situation, we can define a polar coordinate system $(r,\theta)$ with $r\in (0,R)$ and $\theta\in [0,2\pi)$ such that the stream function is given by $f = F(r)$, the steady velocity field is given by $U = u(r) \, \partial_{\theta}$, and the area form is given by $\mu = \varphi(r)\, dr\wedge d\theta$, for some functions $F$, $u$, and $\varphi$ satisfying $u(r) = F'(r)/\varphi(r)$. Here the metric components are given by
$$ds^2 = g_{11}(r,\theta) \, dr^2 + 2g_{12}(r,\theta) \, dr d\theta + g_{22}(r,\theta) \, d\theta^2,$$
with no simplifications except that
$$g_{11}g_{22}-g_{12}^2 = \varphi(r)^2 \qquad \text{and}\qquad
\frac{\partial}{\partial \theta}\big( u(r) g_{12}(r,\theta)\big) = \frac{\partial}{\partial r}\big( u(r) g_{22}(r,\theta)\big) - \varphi(r) \omega(r).$$
In the case where the speed $\lvert U\rvert$ is constant along trajectories, the manifold must be rotationally symmetric already, and these polar coordinates will reduce to the standard ones.

We prove in Lemma \ref{polarcoordsconverse} that the functions $F(r)$, $\varphi(r)$, and $G(r)$ may be specified arbitrarily (subject to the right behavior at the origin $r=0$ and the requirements that $F'$, $G$, and $\varphi$ are all positive for $r>0$) together with a single arbitrary but sufficiently small function $\greekfcn(r,\theta)$, and any such choice will generate a steady flow in a cell on some curved surface. When the function $\greekfcn$ is identically zero, we obtain rotationally symmetric examples. This principle can be used to make up more interesting examples, especially since it can be quite difficult to compute these quantities if the function and manifold are already given.

The main theorem of this paper is the following criterion for conjugate points along the geodesic $\eta$.

\begin{theorem}\label{mainindexthm}
Suppose we are given a solution $f$ of the steady Euler equation \eqref{steadyeuler}, with a unique critical point in a cell $C$ which is a local minimum, with $f=0$ on $\partial C$. Construct a polar coordinate chart $(r,\theta)$ as in Lemma \ref{polarcoordsgeneral}.
Define $E, G\colon [0,R]\to [0,\infty)$ by the formulas
\begin{equation}\label{EGdef}
E(r) = \frac{1}{2\pi} \int_0^{2\pi} g_{11}(r,\theta)\,d\theta, \qquad G(r) = \frac{1}{2\pi} \int_0^{2\pi} g_{22}(r,\theta)\,d\theta.
\end{equation}
Write $\Delta f = \omega(r)$ for the vorticity.

If for some $\alpha\in \mathbb{R}$ and some
$\xi\colon [0,R]\to \mathbb{R}$ such that $\xi(0)=\xi(R)=0$,
we have
\begin{equation}\label{alphacondition}
I = \int_0^R \frac{G(r)}{\varphi(r)} \left( \frac{d}{dr}\Big[ \big(\alpha - u(r))\xi(r)\Big]\right)^2 + \frac{E(r)}{\varphi(r)} \big( \alpha - u(r)\big)^2 \xi(r)^2 - \omega'(r)\big( \alpha - u(r)\big) \xi(r)^2 \, dr < 0,
\end{equation}
then $\eta(\tau)$ is conjugate to $\eta(0)=\id$ for some $\tau>0$.
\end{theorem}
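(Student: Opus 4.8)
The plan is to apply the classical sufficient condition recalled in the introduction: it suffices to exhibit, for some $T>0$, a field $\mathbf{Y}$ along $\eta$ on $[0,T]$ with $\mathbf{Y}(0)=\mathbf{Y}(T)=0$ and $I(\mathbf{Y},\mathbf{Y})<0$. Following the idea sketched there, I would take a rotating, oscillating perturbation supported in the cell $C$: let $Y(t)$ be the divergence-free field on $M$ whose stream function in the polar chart of Lemma \ref{polarcoordsgeneral} is $\psi(t,r,\theta)=\sin(\pi t/T)\,\xi(r)\cos(\theta-\alpha t)$, extended by $0$ outside $\overline{C}$ (the hypotheses $\xi(0)=\xi(R)=0$, together with the behavior of $\varphi$ near $r=0$ from Lemma \ref{polarcoordsgeneral} — one may also ask $\xi(r)=O(r)$ at the center — make $Y(t)$ a genuine element of $T_{\id}\Diffmu(M)$), and set $\mathbf{Y}(t)=Y(t)\circ\eta(t)$. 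The lowest angular frequency $\cos\theta$ is dictated by the heuristic of the introduction (higher frequencies only postpone the conjugate point), and $\alpha$ is the drift speed, so that $\psi$ is a traveling wave of angular phase speed $\alpha$ riding on the shear $U=u(r)\,\partial_\theta$; we will use the rigid angular rotation $\theta\mapsto\theta-\alpha t$ rather than the Lagrangian flow $\eta(\alpha t)_*$, because the latter grows secularly off the rotationally symmetric case while the former keeps $Z(t):=\sgrad(\xi(r)\cos(\theta-\alpha t))$ a bounded $2\pi/\alpha$-periodic family on $M$.

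Next I would reduce the index form to a time average. Write $\mathbf{Y}(t)=\phi(t)\,\mathbf{Z}(t)$ with $\phi(t)=\sin(\pi t/T)$ and $\mathbf{Z}(t)=Z(t)\circ\eta(t)$. The product rule for $D/dt$ and the identity $\langle\mathbf{Z},\tfrac{D\mathbf{Z}}{dt}\rangle=\tfrac12\tfrac{d}{dt}\|\mathbf{Z}\|^2$ give
$$I(\mathbf{Y},\mathbf{Y})=\int_0^T \phi'(t)^2\|\mathbf{Z}(t)\|^2+\phi(t)\phi'(t)\tfrac{d}{dt}\|\mathbf{Z}(t)\|^2+\phi(t)^2\left(\|\tfrac{D\mathbf{Z}}{dt}\|^2+\langle\mathbf{R}(\mathbf{Z},\dot\eta)\dot\eta,\mathbf{Z}\rangle\right)dt.$$
Integrating the middle term by parts and using $\phi''=-(\pi/T)^2\phi$ and $\phi(0)=\phi(T)=0$, the $\phi'^2\|\mathbf{Z}\|^2$ terms cancel, leaving
$$I(\mathbf{Y},\mathbf{Y})=\int_0^T \phi(t)^2\left((\tfrac{\pi}{T})^2\|\mathbf{Z}(t)\|^2+B(t)\right)dt,\qquad B(t):=\|\tfrac{D\mathbf{Z}}{dt}\|^2+\langle\mathbf{R}(\mathbf{Z},\dot\eta)\dot\eta,\mathbf{Z}\rangle.$$
Since $\eta$ preserves $\mu$, the $L^2$ norms $\|\mathbf{Z}(t)\|$ and — after expressing $D\mathbf{Z}/dt$ and the curvature operator of $\Diffmu(M)$ through their Eulerian representatives (Ebin–Marsden, Misio\l{}ek) — the quantity $B(t)$ depend on $t$ only through the explicit factors $\cos(\theta-\alpha t)$, $\sin(\theta-\alpha t)$ (quadratically), hence $B$ is bounded and $\tfrac{2\pi}{\alpha}$-periodic. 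Therefore $(\pi/T)^2\int_0^T\phi^2\|\mathbf{Z}\|^2\to 0$ and $\int_0^T\phi(t)^2B(t)\,dt=\tfrac{T}{2}\,\bar B+O(1)$ as $T\to\infty$, where $\bar B$ is the time average of $B$. Consequently it suffices to prove that the hypothesis $I<0$ of \eqref{alphacondition} forces $\bar B<0$; then $I(\mathbf{Y},\mathbf{Y})\to-\infty$, so for all large $T$ there is a conjugate point $\eta(\tau)$ with $\tau<T$.

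The remaining, and main, step is the explicit evaluation of $\bar B$. Here one uses that the Hamiltonian field $Z(t)=\sgrad\psi$ has components determined by the area form alone, $Z=\tfrac1{\varphi}(\psi_\theta\,\partial_r-\psi_r\,\partial_\theta)$, so only $g_{11},g_{12},g_{22}$ and $\varphi$ enter the metric pairings; expanding $\|D\mathbf{Z}/dt\|^2$ (the squared $L^2$-norm of the Leray projection of $\partial_t Z+\grad_U Z+\grad_Z U$) and the sectional-curvature term and averaging in $t$, the squares $\cos^2(\theta-\alpha t)$, $\sin^2(\theta-\alpha t)$ average to $\tfrac12$ while $\cos(\theta-\alpha t)\sin(\theta-\alpha t)$ averages to $0$; this removes $g_{12}$ and replaces $g_{11},g_{22}$, after the $\theta$-integration, by $E(r),G(r)$ of \eqref{EGdef}. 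The advective part of $D\mathbf{Z}/dt$ is $\sgrad$ of $\bigl(\partial_t\psi+u\,\partial_\theta\psi\bigr)=\sin(\pi t/T)\,(\alpha-u(r))\xi(r)\sin(\theta-\alpha t)$, which already produces the first two summands of $I$ in \eqref{alphacondition} (with $\tfrac{d}{dr}[(\alpha-u)\xi]$ coming from the $\partial_r$-derivative in $\sgrad$). The remaining term $-\omega'(r)(\alpha-u(r))\xi(r)^2$ must come from the $\grad_Z U$ and pressure contributions together with the curvature term; to extract it one uses the structural relations of Lemma \ref{polarcoordsgeneral} (in particular $g_{11}g_{22}-g_{12}^2=\varphi^2$ and the compatibility identity for $ug_{12},\,ug_{22}$), the relation $u=F'/\varphi$, the vorticity relation $\Delta f=\omega$, and an integration by parts in $r$ turning an $\omega$ term into $\omega'$. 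Carried through, this gives $\bar B=cI$ for a positive constant $c$ (the $\tfrac12$ from averaging times the $2\pi$ from the $\theta$-integration, i.e. $c=\pi$), which is exactly what is needed.

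I expect that last extraction — the curvature/$\grad_Z U$/pressure bookkeeping that yields the $-\omega'(\alpha-u)\xi^2$ term — to be the main obstacle: one must keep careful track of the two-form defining the curvature operator of $\Diffmu(M)$ and of the pressure corrections, and verify that after the $\theta$-integration all genuinely metric-dependent pieces collapse onto $E$, $G$, $\varphi$ and the prescribed profiles $u$, $\omega$ with the stated coefficients. This term is the analogue of the Rayleigh term in the spectral stability theory of shear flows — natural, since $\psi$ is a traveling normal mode of phase speed $\alpha$. Two minor technical points: one should record that the measure-zero set of radii where $\alpha=u(r)$ or where $u(r)\alpha/2\pi$ is rational is harmless, since everything is integrated in $r$; and the use of the rigid rotation $\theta\mapsto\theta-\alpha t$ (rather than $\eta(\alpha t)_*$) is what makes the time average literally a $\theta$-average and hence what forces the appearance of the $\theta$-means $E(r)$ and $G(r)$ in \eqref{EGdef}.
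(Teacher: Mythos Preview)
Your test field and large-$T$ strategy match the paper's exactly; the difference is in how the index form is evaluated, and there your plan takes a much harder road and does not actually reach the destination.

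The paper does \emph{not} compute the curvature operator or any Leray projections. Instead it relies on a prior simplification (Lemma \ref{indexsimplifiedlemma} and Corollary \ref{indexformcoro2D}): for a steady 2D flow $U=\sgrad f$ and $Y=\sgrad g$, the full index form \eqref{indexformrightinvariant} collapses to
\[
I(Y,Y)=\int_0^T\!\!\int_M \lvert\nabla h\rvert^2+(\Delta f)\,\{g,h\}\,d\mu\,dt,\qquad h:=g_t+\{f,g\},
\]
with no projections and no curvature tensor. With $g=\sin(t/T)\,\xi(r)\cos(\theta-\alpha t)$ one writes $h$ explicitly, computes $\lvert\nabla h\rvert^2$ and $\{g,h\}$ by hand, applies the elementary time-average Lemma \ref{timeintegrallemma}, integrates in $\theta$ (which is where $E(r),G(r)$ appear), and finally integrates by parts once in $r$: the second term gives $\int_0^R \omega(r)\,\tfrac{d}{dr}\!\big[(\alpha-u)\xi^2\big]\,dr=-\int_0^R\omega'(r)(\alpha-u)\xi^2\,dr$. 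That is the whole computation; the $\omega'$ term drops out in two lines, not from any ``curvature/$\nabla_ZU$/pressure bookkeeping.''

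By contrast, your route factors $\mathbf{Y}=\phi\,\mathbf{Z}$ and then proposes to evaluate $B(t)=\lVert D\mathbf{Z}/dt\rVert^2+\langle\mathbf{R}(\mathbf{Z},\dot\eta)\dot\eta,\mathbf{Z}\rangle$ directly from the Ebin--Marsden formulas. That is in principle possible but you have not carried it out, and the place you flag as ``the main obstacle'' is exactly the entire content of the computation. Two concrete slips suggest the bookkeeping is not yet under control: after your factorization, the Eulerian representative of $D\mathbf{Z}/dt$ is $\partial_tZ+P(\nabla_UZ)$, not $\partial_tZ+\nabla_UZ+\nabla_ZU$ as you wrote; and your ``advective part'' formula $\sin(\pi t/T)(\alpha-u)\xi\sin(\theta-\alpha t)$ still carries the factor $\sin(\pi t/T)$ that you have just stripped off into $\phi$. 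The clean way forward is to use the simplified index formula above and avoid the curvature tensor altogether.
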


The case where $f$ has a local maximum is identical and leads to a clockwise flow; we have chosen this case only because it gives the more standard polar coordinates.

Having chosen $\xi$, the quantity $I$ in \eqref{alphacondition} is a quadratic function $I = A \alpha^2 + 2B \alpha + C$ of $\alpha$, with $A>0$, which can be made negative for some $\alpha$ if and only if $B^2 - AC>0$.

If we assume that $u'(r)\ne 0$ in the cell (i.e., the velocity is both positive and monotone), we obtain the following alternative forms of Theorem \ref{mainindexthm}. The condition \eqref{indexVversion} is sometimes easier than \eqref{alphacondition}, while the condition \eqref{indexQversion} is typically harder than either, but the latter is useful to compare to previous results on the sign of the curvature of $\Diffmu(M)$, as well as for concluding that the index is always positive in some situations (see Section \ref{kolmosection}).

\begin{corollary}\label{indexthmQversion}
Suppose we have the same situation as in Theorem \ref{mainindexthm}. Define $v\colon [0,R]\to \mathbb{R}$ by
\begin{equation}\label{vdef}
v(r) = \frac{G'(r)u(r)}{2\varphi(r)}.
\end{equation}
Then  the index $I$ appearing in \eqref{alphacondition} can be written in the form
\begin{equation}\label{indexVversion}
I = \int_0^R \Big[ \frac{\big(\alpha - u(r)\big)^2}{\varphi(r)} \big( G(r) \xi'(r)^2 + E(r) \xi(r)^2\big) - 2 v'(r)\big(\alpha - u(r)\big) \xi(r)^2 \Big] \, dr.
\end{equation}
If in addition $u'(r)$ is nowhere zero on $(0,R)$, define
\begin{equation}\label{Qdef}
Q(r) = \frac{v'(r)}{u'(r)}.
\end{equation}
Then the index \eqref{indexVversion} may also be written in the form
\begin{equation}\label{indexQversion}
I = \int_0^R \big(\alpha - u(r)\big)^2 \left[ \frac{G(r)}{\varphi(r)} \Big( \xi'(r) - \frac{Q(r)\varphi(r)}{G(r)} \, \xi(r)\Big)^2 \\
- \frac{\varphi(r)}{G(r)} \, M(r) \xi(r)^2 \right] \, dr,
\end{equation}
where
\begin{equation}\label{Mdefinition}
M(r) := \frac{G(r)}{\varphi(r)} \, Q'(r) + Q(r)^2 - \frac{E(r)G(r)}{\varphi(r)^2}.
\end{equation}
\end{corollary}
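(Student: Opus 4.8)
The plan is to start from the index formula \eqref{alphacondition} in Theorem~\ref{mainindexthm} and transform it through two successive algebraic manipulations. First I would establish the equivalence of \eqref{alphacondition} with \eqref{indexVversion}. The only difference between the two expressions is that the cross term $-\omega'(r)(\alpha - u(r))\xi(r)^2$ in \eqref{alphacondition} is replaced by $-2v'(r)(\alpha-u(r))\xi(r)^2$ in \eqref{indexVversion}, and that the derivative term $\frac{d}{dr}[(\alpha-u)\xi]$ is expanded. Expanding the square gives $\big(\frac{d}{dr}[(\alpha-u)\xi]\big)^2 = (\alpha-u)^2(\xi')^2 - 2u'(\alpha-u)\xi\xi' + (u')^2\xi^2$, so I need to absorb the terms $-\frac{G}{\varphi}\big(2u'(\alpha-u)\xi\xi' - (u')^2\xi^2\big)$. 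The key identity to derive is that, modulo a total derivative integrating to zero (since $\xi(0)=\xi(R)=0$), one has
\[
\int_0^R \frac{G}{\varphi}\Big((u')^2\xi^2 - 2u'(\alpha-u)\xi\xi'\Big)\,dr = \int_0^R \big(\omega'(r) - 2v'(r)\big)(\alpha-u)\xi^2\,dr.
\]
This should follow by integrating $-2\frac{G}{\varphi}u'(\alpha-u)\xi\xi' = -\frac{G}{\varphi}u'(\alpha-u)\frac{d}{dr}(\xi^2)$ by parts and using the relation $u(r)=F'(r)/\varphi(r)$ together with the definition $v = G'u/(2\varphi)$ and the vorticity formula $\omega(r) = \Delta f$ expressed in the polar coordinates of Lemma~\ref{polarcoordsgeneral}. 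The main obstacle will be pinning down the precise expression for $\omega'(r)$ in terms of $E$, $G$, $\varphi$, $u$: this requires writing the Laplacian of $F(r)$ in the $(r,\theta)$ chart, averaging over $\theta$, and differentiating. I expect $\omega'$ to involve exactly the combination $\frac{d}{dr}\big(\frac{G'u}{\varphi}\big)$ plus lower-order pieces that cancel against the $\frac{E}{\varphi}u'(\alpha-u)$-type remainder, so that the bookkeeping closes.

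Second, assuming $u'(r)\neq 0$ on $(0,R)$, I would complete the square in \eqref{indexVversion} with respect to $\xi'$. Treating $\frac{(\alpha-u)^2 G}{\varphi}$ as the coefficient of $(\xi')^2$, the natural completion is
\[
\frac{(\alpha-u)^2 G}{\varphi}\Big(\xi' - \frac{Q\varphi}{G}\xi\Big)^2 = \frac{(\alpha-u)^2 G}{\varphi}(\xi')^2 - 2(\alpha-u)^2 Q\,\xi\xi' + \frac{(\alpha-u)^2 Q^2\varphi}{G}\xi^2.
\]
The cross term $-2(\alpha-u)^2 Q\xi\xi' = -(\alpha-u)^2 Q\frac{d}{dr}(\xi^2)$ must be integrated by parts; since $Q = v'/u'$, the derivative $\frac{d}{dr}\big((\alpha-u)^2 Q\big) = -2(\alpha-u)u' Q + (\alpha-u)^2 Q' = -2(\alpha-u)v' + (\alpha-u)^2 Q'$, which reproduces precisely the $-2v'(\alpha-u)\xi^2$ term present in \eqref{indexVversion}. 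Collecting all remaining $\xi^2$ coefficients — namely $\frac{(\alpha-u)^2 E}{\varphi} - \frac{(\alpha-u)^2 Q^2\varphi}{G} + (\alpha-u)^2 Q'$ from the integration by parts — and factoring out $(\alpha-u)^2\frac{\varphi}{G}$ yields exactly $-M(r)$ as defined in \eqref{Mdefinition}. This is mostly routine once the first equivalence is established; the only subtlety is keeping track of the sign and verifying that the boundary terms from both integrations by parts vanish because $\xi(0)=\xi(R)=0$ and the coefficients are bounded (which needs a brief remark about the behavior of $G$, $\varphi$, $Q$ near $r=0$ and $r=R$).

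I would present the proof as a chain of identities: state the two integrations by parts as separate displayed computations, invoke the relations among $F$, $u$, $\varphi$, $\omega$ from Lemma~\ref{polarcoordsgeneral}, and then simply remark that substituting and regrouping gives \eqref{indexVversion} and then \eqref{indexQversion}. The genuinely delicate point, and the one I would isolate as a lemma or an explicit computation, is the identity relating $\omega'(r)$ to $2v'(r)$ up to the $\frac{E}{\varphi}u'$ remainder; everything else is algebra driven by completing the square. If that vorticity identity turns out messier than hoped, the fallback is to verify \eqref{indexVversion} directly by differentiating the integrand of \eqref{alphacondition} and matching term by term, treating the appearance of $v'$ as the \emph{definition}-level consequence of how $\omega$ was computed in the construction of the polar chart.
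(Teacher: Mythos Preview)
Your approach is essentially the same as the paper's, which also shows that $I_1-I_2$ and $I_2-I_3$ are each integrals of an exact derivative using precisely the integrations by parts you describe. The one point you flag as uncertain is simpler than you fear: by formula \eqref{vorticityGdef} in Corollary~\ref{vorticitycoords} one has $\omega(r)=\tfrac{1}{\varphi(r)}\tfrac{d}{dr}\big(G(r)u(r)\big)$, so $\omega-2v = Gu'/\varphi$ exactly, with no $E$-terms and no ``lower-order remainder'' to cancel.
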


The condition on the function $M$ defined by \eqref{Mdefinition} that
\begin{equation}\label{Qconditioncurvature}
M(r) > 0 \text{  for some $r\in (0,R)$}
\end{equation}
is sufficient to have some positive sectional curvature of $\Diffmu(M)$ in a section containing $U$; see \cite{nonpositive}.
Existence of such positive curvature 
is necessary but not sufficient for the existence of conjugate points; Corollary \ref{indexthmQversion} shows that one needs \eqref{Qconditioncurvature} to hold not merely at one point but on a sufficiently large interval $(a,b)\subset (0,R)$.

If the manifold is rotationally symmetric, i.e., there is a polar coordinate system with $ds^2 = dr^2 + \varphi(r)^2 \, d\theta^2$ for some function $\varphi(r)$ already, then any purely radial function $f = F(r)$ will satisfy \eqref{steadyeuler}, and the polar coordinates we construct in Lemma \ref{polarcoordsgeneral} will reduce to those. In that case we will have $E(r) = 1$ and $G(r) = \varphi(r)^2$. Furthermore in that case we do not need to assume $f$ has only one critical point in the cell $C$, since the only purpose of that assumption is to construct the $\theta$ coordinate manually; instead we can work on the entire manifold $M$. This situation occurs if and only if the speed $\lvert U\rvert$ is constant along trajectories, as we will see in Remark \ref{constantspeedremark}.

We now demonstrate several consequences of Theorem \ref{mainindexthm}. We begin with ``isochronal flows'' on a rotationally symmetric manifold, as defined by Drivas et al.~\cite{drivasmisiolek}, or in other words Killing fields of the rotationally symmetric metric.

\begin{corollary}\label{killingcorollary}
Suppose 
that the steady Euler velocity field is given by $U = \partial_{\theta}$, i.e., $u(r)\equiv 1$, so that the field is isochronal (i.e., all orbits have the same period $2\pi$). 

Then there is a conjugate point along $\eta$ if and only if $\frac{d}{dr}(G'(r)/\varphi(r))$ is not identically zero on $[0,R]$. In the rotationally symmetric case where $E(r)=1$ and $G(r)=\varphi(r)^2$, this reduces to the condition that
the Gaussian curvature is not identically zero on $M$.
\end{corollary}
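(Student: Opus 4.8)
The plan is to substitute $u(r)\equiv1$ into Theorem~\ref{mainindexthm}, simplify the index, identify the resulting obstruction in terms of $G$, and then treat the degenerate case separately. For the ``if'' direction: when $u\equiv1$ the combination $\alpha-u(r)$ equals the constant $\beta:=\alpha-1$, so $\frac{d}{dr}\big[(\alpha-u)\xi\big]=\beta\,\xi'$ and \eqref{alphacondition} collapses to
\[
I=\beta^2\int_0^R\Big(\tfrac{G(r)}{\varphi(r)}\,\xi'(r)^2+\tfrac{E(r)}{\varphi(r)}\,\xi(r)^2\Big)\,dr-\beta\int_0^R\omega'(r)\,\xi(r)^2\,dr,
\]
a quadratic $A_0\beta^2-D\beta$ in $\beta$ with $A_0>0$ (since $E,G,\varphi>0$ and $\xi\not\equiv0$) and vanishing constant term, so $\min_\beta I=-D^2/(4A_0)$. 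This is negative precisely when $D=\int_0^R\omega'(r)\,\xi(r)^2\,dr\neq0$; taking $\xi$ to be a bump supported where $\omega'$ has a fixed sign shows such a $\xi$ exists iff $\omega'$ is not identically zero on $(0,R)$, in which case Theorem~\ref{mainindexthm} yields a conjugate point.

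To express this through $G$ rather than $\omega$: with $u\equiv1$ the second compatibility relation of Lemma~\ref{polarcoordsgeneral} reads $\partial_\theta g_{12}=\partial_r g_{22}-\varphi(r)\omega(r)$. Integrating in $\theta$ over $[0,2\pi]$, the left side integrates to $0$ by $2\pi$-periodicity of $g_{12}$ and the right side to $2\pi G'(r)-2\pi\varphi(r)\omega(r)$ by \eqref{EGdef}, giving $\omega(r)=G'(r)/\varphi(r)$ and hence $\omega'(r)=\frac{d}{dr}\big(G'(r)/\varphi(r)\big)$. So Theorem~\ref{mainindexthm}'s hypothesis holds exactly when $\frac{d}{dr}(G'/\varphi)$ is not identically zero on $(0,R)$, equivalently on $[0,R]$ by continuity; this establishes the ``if'' direction. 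In the rotationally symmetric case $E\equiv1$, $G=\varphi^2$, one has $G'/\varphi=2\varphi'$, so $\frac{d}{dr}(G'/\varphi)=2\varphi''=-2K\varphi$ with $K$ the Gaussian curvature, which vanishes identically iff $K\equiv0$ since $\varphi>0$ on $(0,R)$.

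The ``only if'' direction is the substantive part, because Theorem~\ref{mainindexthm} gives only a sufficient condition: one must show that when $\frac{d}{dr}(G'/\varphi)\equiv0$, i.e.\ $\omega$ is constant, $\eta$ has \emph{no} conjugate points. The plan is to prove the full index form \eqref{generalindexform} is nonnegative on every variation vanishing at both endpoints. The rotationally symmetric sub-case is clean: $\omega$ constant and $G=\varphi^2$ force $\varphi'$ constant, and the behavior required at the origin, $\varphi(0)=0$ and $\varphi'(0)=1$, then forces $\varphi(r)=r$, so the cell is flat and $\eta$ is rigid rotation of a flat disk, a geodesic known to be conjugate-point-free (consistent with the nonpositivity results of \cite{nonpositive} and with the Jacobi fields available explicitly for isochronal flows). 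For a non-symmetric cell one decomposes an admissible variation into $\theta$-Fourier modes; since the $\omega'$-term has disappeared, the mode-$k$ contribution to the index is of the form $\beta^2\int_0^R\big(\tfrac{G(r)}{\varphi(r)}\,\xi'(r)^2+k^2\tfrac{E(r)}{\varphi(r)}\,\xi(r)^2\big)\,dr\ge0$.

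The main obstacle is exactly this last point: one must justify that in the isochronal regime these modal index forms genuinely exhaust the second variation — that the family of time-dependent test fields used to prove Theorem~\ref{mainindexthm} is not merely sufficient but \emph{decisive} here — so that nonnegativity on them truly precludes conjugate points. This forces the argument to lean on the explicit solvability of the Jacobi equation along isochronal flows (the only setting where all Jacobi fields are known) rather than on the curvature/index estimates that suffice for the ``if'' direction. With that completeness in hand, the chain ``$\frac{d}{dr}(G'/\varphi)\equiv0\Rightarrow\omega$ constant $\Rightarrow$ index nonnegative on every mode $\Rightarrow$ no conjugate points'' closes the equivalence.
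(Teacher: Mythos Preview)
Your ``if'' direction is correct and matches the paper's proof: with $u\equiv 1$ the index \eqref{alphacondition} collapses to $A_0\beta^2 - D\beta$ with $A_0>0$, which can be made negative iff $D=\int_0^R \omega'\xi^2\,dr\neq 0$ for some admissible $\xi$, and this happens iff $\omega'=\frac{d}{dr}(G'/\varphi)$ is not identically zero. Your derivation of $\omega=G'/\varphi$ (which the paper simply quotes as \eqref{vorticityGdef}) and the rotationally symmetric reduction $G'/\varphi=2\varphi'$, $\varphi''=-\kappa\varphi$ are also right.

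For the ``only if'' direction you have correctly located the gap: nonnegativity of $I$ on the special family of test fields from Theorem~\ref{mainindexthm} does not by itself preclude conjugate points, since that family is not exhaustive. But the remedies you propose---a $\theta$-Fourier decomposition of general variations, or explicit solution of the Jacobi equation along isochronal flows---are much harder than what is needed and are not what the paper does. The clean argument is already available as Proposition~\ref{constantvorticityprop}: when $\omega=\Delta f$ is constant, the second term in the \emph{full} index form \eqref{indexformhardestsgrad} is
\[
\omega\int_C \{g,h\}\,d\mu \;=\; \omega\int_C dg\wedge dh \;=\; \omega\int_{\partial C} g\,dh \;=\; 0,
\]
since $g$ and $f$ are constant on $\partial C$ and hence so is $h=g_t+\{f,g\}$. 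This leaves $I(Y,Y)=\int_0^T\!\int_C |\nabla h|^2\,d\mu\,dt>0$ for every nontrivial admissible variation, so no conjugate point can exist. The paper's written proof of the corollary only spells out the ``if'' direction; the ``only if'' is implicit from this earlier proposition (and, in the rotationally symmetric flat cases, from the nonpositive-curvature results cited in the introduction). You should invoke Proposition~\ref{constantvorticityprop} directly rather than attempt the modal or Jacobi-field route.
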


In particular Corollary \ref{killingcorollary} captures the conjugate points we know for rotations on the $2$-sphere, as well as for any ellipsoid (see \cite{tauchiyonedaellipsoid}). The only rotationally symmetric situations in which we will not have a conjugate point are the flat torus with $\varphi(r)\equiv 1$ and rigid rotations in a disc with $\varphi(r)=r$. It is already known that the curvature operator is nonnegative in the first case by Misio{\l}ek~\cite{Mis1993} and in the second case by the author~\cite{nonpositive}, so we could not have gotten conjugate points anyway.

Above we have considered the cases where $u'$ is identically zero and where $u'$ is nowhere zero. It is also interesting to consider the case where $u'(r_0)$ is zero at one point (while assuming for nondegeneracy that $u''(r_0)\ne 0$). In \cite{nonpositive} we saw (in the rotationally symmetric case) that this was sufficient to find some positive sectional curvature in a plane containing $U$. To obtain a conjugate point one needs a little more.

\begin{corollary}\label{uprimecorollary}
Consider again the situation of Theorem \ref{mainindexthm}.
Suppose that the velocity function $u(r)$ has an isolated critical point at $r_0\in (0,R)$, with $u'(r_0)=0$ and $u''(r_0)\ne 0$. If
\begin{equation}\label{localextremecondition}
-\frac{u(r_0)}{\varphi(r_0)u''(r_0)} \frac{d}{dr}\Big|_{r=r_0}\left( \frac{G'(r)}{2\varphi(r)}\right) > \frac{9}{16},
\end{equation}
then there is eventually a conjugate point along the geodesic $\eta$.
In the rotationally symmetric case where $G(r)=\varphi(r)^2$ and $\varphi''(r) = -\kappa(r)\varphi(r)$ in terms of the Gaussian curvature $\kappa$, condition \eqref{localextremecondition} reduces to
$$ \frac{\kappa(r_0) u(r_0)}{u''(r_0)} > \frac{9}{16}.$$
\end{corollary}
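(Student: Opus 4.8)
The plan is to derive Corollary~\ref{uprimecorollary} from the form \eqref{indexQversion} of the index given in Corollary~\ref{indexthmQversion} by localizing the test function $\xi$ near the critical point $r_0$, where $\alpha-u(r)$ does \emph{not} vanish (choose $\alpha$ away from the value $u(r_0)$, in fact we will see $\alpha$ should be taken close to $u(r_0)$ but not equal). The key observation is that near an isolated nondegenerate critical point of $u$, we have $u'(r)\approx u''(r_0)(r-r_0)$, so $Q(r) = v'(r)/u'(r)$ blows up like $1/(r-r_0)$ unless $v'(r_0)=0$; but generically $v'(r_0) = \frac{d}{dr}\big|_{r_0}\big(G'(r)u(r)/(2\varphi(r))\big)$ is nonzero, and since $u'(r_0)=0$ this equals $u(r_0)\,\frac{d}{dr}\big|_{r_0}\big(G'(r)/(2\varphi(r))\big)$. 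So near $r_0$, $Q(r)\sim \dfrac{c}{u''(r_0)(r-r_0)}$ where $c = u(r_0)\,\frac{d}{dr}\big|_{r_0}\big(G'(r)/(2\varphi(r))\big)$, and $Q(r)^2 \sim c^2/\big(u''(r_0)^2 (r-r_0)^2\big)$ dominates the other two terms in $M(r)$ from \eqref{Mdefinition}, both of which are $O(1/(r-r_0))$ at worst (the $Q'$ term) or $O(1)$. Thus $M(r) \approx Q(r)^2 \sim c^2/\big(u''(r_0)^2 (r-r_0)^2\big) > 0$ near $r_0$.

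Next I would choose $\xi$ supported in a small interval $(r_0-\epsilon, r_0+\epsilon)$ and compare the two terms in \eqref{indexQversion}. Factoring out $(\alpha-u(r))^2 \approx (\alpha - u(r_0))^2$, which is a nonzero constant to leading order, the index is negative provided
\[
\int \frac{G(r)}{\varphi(r)}\Big(\xi'(r) - \frac{Q(r)\varphi(r)}{G(r)}\xi(r)\Big)^2\,dr < \int \frac{\varphi(r)}{G(r)} M(r)\,\xi(r)^2\,dr.
\]
Since both $Q$ and (the dominant part of) $M$ have a $1/(r-r_0)$ and $1/(r-r_0)^2$ singularity respectively, and $G/\varphi$, $\varphi/G$ are bounded away from $0$ and $\infty$ near $r_0$, this reduces after rescaling $s = (r-r_0)/\epsilon$ to a one-dimensional Hardy-type inequality: writing $\beta = c/u''(r_0) = -u(r_0)/(\varphi(r_0)u''(r_0))\cdot\frac{d}{dr}\big|_{r_0}(G'/(2\varphi))\cdot(-1)$... more precisely the relevant model problem is whether there exists $\xi$ with $\xi(\pm 1)=0$ (or compactly supported) such that
\[
\int \Big(\xi'(s) - \frac{\beta}{s}\xi(s)\Big)^2 ds < \int \frac{\beta^2}{s^2}\xi(s)^2\,ds,
\]
equivalently $\int \xi'(s)^2 ds - 2\beta \int \frac{\xi(s)\xi'(s)}{s}ds < 0$, i.e. $\int \xi'(s)^2 ds + \beta\int \frac{\xi(s)^2}{s^2}ds < 0$ after integrating by parts the cross term. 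This is impossible as stated — so the correct model keeps the full $M(r)$, and the point is that the coefficient $9/16$ in \eqref{localextremecondition} is exactly the sharp constant in the relevant Hardy inequality on a half-line or the critical eigenvalue threshold, arising because the \emph{actual} leading behavior of $M$ includes a correction beyond $Q^2$: expanding $M(r)$ carefully, $\frac{G}{\varphi}Q' + Q^2 \sim \frac{c}{u''(r_0)(r-r_0)^2}\cdot\frac{-G}{\varphi}\cdot\frac{1}{\ldots}$, and one finds $M(r)\sim \frac{N}{(r-r_0)^2}$ with a specific constant $N$ depending on $\beta := -\frac{u(r_0)}{\varphi(r_0)u''(r_0)}\frac{d}{dr}\big|_{r_0}(G'/(2\varphi))$ through $N = \beta^2 - \beta$ or similar. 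The Hardy inequality $\int \xi'^2 \ge \frac14\int \xi^2/s^2$ on $(0,\infty)$ then shows \eqref{indexQversion} can be made negative precisely when $N > 1/4$ in the appropriate normalization, which after bookkeeping becomes $\beta > 9/16$; the extra shift from $1/4$ to $9/16 = 1/4 + 5/16$, or the $3/4$-vs-$1/4$ structure ($(3/4)^2 = 9/16$!), comes from the fact that near $r_0$ one has $u(r) - u(r_0) \sim \frac12 u''(r_0)(r-r_0)^2$ so that $(\alpha - u(r))^2$ is \emph{not} constant but rather $\sim(\alpha-u(r_0))^2$ with corrections, and more importantly the measure and the $Q$ term interact so the effective Hardy exponent is shifted — the sharp threshold for $\int(\xi' - \frac{\beta}{s}\xi)^2 < \int \frac{\beta^2 - \beta/2 \cdot (\text{stuff})}{s^2}\xi^2$ type inequality.

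The hard part will be pinning down the exact constant $9/16 = (3/4)^2$ and verifying it is sharp; this requires a careful asymptotic expansion of $Q(r)$, $Q'(r)$, $M(r)$, and crucially $(\alpha-u(r))^2$ near $r_0$ to \emph{second} order (since the quadratic vanishing of $u - u(r_0)$ matters), followed by an exact solution of the resulting Euler--Lagrange equation $\xi'' = (\text{const}/s^2)\xi$ whose indicial roots determine the threshold. Concretely, I expect to take $\xi(r) = |r-r_0|^{3/4}\chi(r)$ or a similar power with a cutoff $\chi$, plug into \eqref{indexQversion}, and show the leading $\int |r-r_0|^{-1/2}\,dr$-type integral (which converges!) has a negative coefficient exactly when \eqref{localextremecondition} holds, with the boundary/cutoff terms being lower order. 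The rotationally symmetric reduction is then immediate: with $G = \varphi^2$ one computes $\frac{d}{dr}(G'/(2\varphi)) = \frac{d}{dr}(\varphi') = \varphi'' = -\kappa\varphi$ at any point, so $\frac{d}{dr}\big|_{r_0}(G'/(2\varphi)) = -\kappa(r_0)\varphi(r_0)$, and \eqref{localextremecondition} becomes $-\frac{u(r_0)}{\varphi(r_0)u''(r_0)}\cdot(-\kappa(r_0)\varphi(r_0)) = \frac{\kappa(r_0)u(r_0)}{u''(r_0)} > \frac{9}{16}$, as claimed.
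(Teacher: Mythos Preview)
Your plan has two substantive problems that would prevent it from closing.

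First, you begin from formula \eqref{indexQversion}, but that form is derived in Corollary~\ref{indexthmQversion} under the explicit hypothesis that $u'(r)$ is nowhere zero on $(0,R)$. Here $u'(r_0)=0$, so $Q(r)=v'(r)/u'(r)$ blows up at $r_0$, and the individual terms in \eqref{indexQversion} are separately divergent for test functions not vanishing at $r_0$. You note this and hope the singularities organise themselves into a clean Hardy problem, but in fact the $1/s^2$ contributions from $Q^2$, from $(G/\varphi)Q'$, and from the cross term in the square all cancel exactly (as they must, since \eqref{indexQversion} equals the manifestly finite \eqref{indexVversion}). So the ``dominant'' $Q^2/s^2$ term you identify is illusory, and the actual leading behaviour is not visible in your expansion. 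The paper avoids all of this by working directly with \eqref{indexVversion}, where every coefficient is smooth at $r_0$.

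Second, and more seriously, your choice of $\alpha$ close to but not equal to $u(r_0)$ cannot work. If $\alpha\neq u(r_0)$ then $(\alpha-u(r))^2$ is bounded below by a positive constant on a neighbourhood of $r_0$; after rescaling $s=(r-r_0)/\varepsilon$ and $\xi(r)=\zeta(s)$, the positive term $\frac{(\alpha-u)^2}{\varphi}G\,\xi'^2$ in \eqref{indexVversion} is of order $\varepsilon^{-1}$ while the potentially negative term $-2v'(\alpha-u)\xi^2$ is of order $\varepsilon$, so the index is positive for small $\varepsilon$. The paper instead sets $\alpha=u(r_0)$ \emph{exactly}, so that $\alpha-u(r)\approx -\tfrac12 u''(r_0)(r-r_0)^2$ vanishes quadratically and $(\alpha-u)^2\approx\tfrac14 u''(r_0)^2(r-r_0)^4$ quartically. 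Then both surviving terms scale as $\varepsilon^3$, the $E$ term is $O(\varepsilon^5)$ and drops out, and the model problem is
\[
\tfrac14 u''(r_0)^2\,\frac{G(r_0)}{\varphi(r_0)}\int_{-1}^{1} s^4 \zeta'(s)^2\,ds \;+\; v'(r_0)\,u''(r_0)\int_{-1}^{1} s^2 \zeta(s)^2\,ds \;<\;0.
\]
The relevant sharp inequality is therefore the \emph{weighted} Hardy inequality $\int_{-1}^{1} s^4\zeta'^2\,ds \ge \tfrac{9}{4}\int_{-1}^{1} s^2\zeta^2\,ds$ for $\zeta(\pm1)=0$, with near-extremisers $\zeta(s)=(1-s^2)^2|s|^{-3/2+\delta}$; this is where the constant $\tfrac{9}{16}=\tfrac14\cdot\tfrac94$ comes from, not from the classical $\int\zeta'^2\ge\tfrac14\int\zeta^2/s^2$. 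Your attempted model $\int(\xi'-\beta\xi/s)^2<\int(\beta^2/s^2)\xi^2$ is the wrong one for exactly the reason you discovered: it collapses to $\int\xi'^2<0$ after the cross term is integrated by parts.

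Your final paragraph on the rotationally symmetric reduction is correct: with $G=\varphi^2$ one has $\frac{d}{dr}(G'/(2\varphi))=\varphi''=-\kappa\varphi$, and \eqref{localextremecondition} becomes $\kappa(r_0)u(r_0)/u''(r_0)>\tfrac{9}{16}$.
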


As the reader might guess, the trick here is to choose $\alpha$ in the index \eqref{indexVversion} to be equal to the critical value $u(r_0)$, which makes $(\alpha - u(r)) \approx -\tfrac{1}{2} u''(r_0)(r-r_0)^2$. For $r$ close to $r_0$, the first two positive terms will thus look like $(r-r_0)^4$ while the last one looks like $(r-r_0)^2$, and thus it can be made to dominate if the test function $\xi$ is supported in a small neighborhood of $r_0$.

The velocity function $u(r)$ typically has a local maximum or minimum at $r=0$, so we can get a similar criterion to Corollary \ref{uprimecorollary} at $r=0$ by using the same trick. Things work out differently however since $E(r) = O(1)$, $\varphi(r)=O(r)$, and $G(r) = O(r^2)$ near $r=0$.

\begin{corollary}\label{uprimeorigincorollary}
Consider again the situation of Theorem \ref{mainindexthm}. Suppose $u(0)\ne 0$, $u'(0)=0$, and $u''(0)\ne 0$. If
\begin{equation}\label{originextremecondition}
3E(0) +12 G''(0) < \frac{2u(0)}{u''(0)} \left( \frac{\phi'''(0)}{\phi'(0)} - G^{iv}(0)\right).
\end{equation}
then there is eventually a conjugate point along the geodesic $\eta$.
In the rotationally symmetric case where $G(r) = \varphi(r)^2$ and $E(r)=1$, this reduces to
\begin{equation}\label{originextremerotational}
\frac{\kappa(0) u(0)}{u''(0)} > \frac{9}{4}.
\end{equation}
\end{corollary}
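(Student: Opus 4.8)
The plan is to run Theorem~\ref{mainindexthm} through the equivalent form \eqref{indexVversion} of the index, with the ``resonant'' choice $\alpha=u(0)$ and a test function concentrating at the origin --- the same device flagged in the discussion before Corollary~\ref{uprimecorollary}, now adapted to $r=0$, where the relevant scalings are $E(r)=O(1)$, $\varphi(r)=\varphi'(0)r+O(r^{3})$, and $G(r)=\tfrac12 G''(0)r^{2}+O(r^{4})$. Since $u'(0)=0$ we have $\alpha-u(r)=-\tfrac12 u''(0)r^{2}+O(r^{4})$. Concretely I would set $\xi(r)=\psi(r/\epsilon)$ for a fixed $\psi\in C_{c}^{\infty}([0,\infty))$ with $\psi(0)=0$ and $\epsilon>0$ small, so that $\xi(0)=\xi(R)=0$ automatically, substitute into \eqref{indexVversion}, and rescale $r=\epsilon s$.

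The crux is an order-matching. Using the expansions above together with the vanishings $\varphi''(0)=0$ and $G'(0)=G'''(0)=0$ forced by smoothness of the metric at $r=0$ --- which in particular give $v'(0)=0$ --- all three terms of \eqref{indexVversion} land at order $\epsilon^{4}$, and one obtains $I=\epsilon^{4}u''(0)^{2}\big(\mathcal{L}(\psi)+o(1)\big)$ as $\epsilon\to0$, where
\[
\mathcal{L}(\psi)=\int_{0}^{\infty}\frac{G''(0)}{8\varphi'(0)}\,s^{5}\psi'(s)^{2}+\Big(\frac{E(0)}{4\varphi'(0)}+\frac{v''(0)}{u''(0)}\Big)s^{3}\psi(s)^{2}\,ds .
\]
The point is that the drift term $-2v'(\alpha-u)\xi^{2}$, which a crude count would make of lower order $\epsilon^{3}$ and hence dominant, in fact contributes at the very same order $\epsilon^{4}$ precisely because $v'(0)=0$; this is why the resulting criterion is not automatic. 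So it suffices to exhibit an admissible $\psi$ with $\mathcal{L}(\psi)<0$: then $I<0$ for $\epsilon$ small and Theorem~\ref{mainindexthm} produces the conjugate point.

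Such a $\psi$ exists if and only if $-v''(0)/u''(0)$ exceeds the sharp constant in the one-dimensional weighted Hardy inequality $\int_{0}^{\infty}s^{5}\psi'^{2}\,ds\ge 4\int_{0}^{\infty}s^{3}\psi^{2}\,ds$ (best constant $\big(\tfrac{5-1}{2}\big)^{2}=4$, with extremal profile $s\mapsto s^{-2}$ approached by truncations, and the constraint $\psi(0)=0$ not affecting it), that is,
\[
-\frac{v''(0)}{u''(0)}\;>\;\Lambda_{\ast}:=\frac{E(0)}{4\varphi'(0)}+\frac{G''(0)}{2\varphi'(0)}.
\]
Finally, from $v(r)=G'(r)u(r)/\big(2\varphi(r)\big)$ together with the expansions, a short Taylor computation at $r=0$ gives
\[
v''(0)=\frac{G''(0)\,u''(0)}{2\varphi'(0)}+\frac{u(0)}{6\varphi'(0)}\Big(G^{iv}(0)-\frac{G''(0)\,\varphi'''(0)}{\varphi'(0)}\Big),
\]
and substituting this into $-v''(0)/u''(0)>\Lambda_{\ast}$ and clearing denominators yields the inequality \eqref{originextremecondition}. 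The rotationally symmetric reduction is then a direct substitution: $E\equiv1$, $\varphi'(0)=1$, $G=\varphi^{2}$ give $G''(0)=2$ and $G^{iv}(0)=-8\kappa(0)$, while $\varphi'''(0)=-\kappa(0)$, so \eqref{originextremecondition} becomes $27<12\,\kappa(0)u(0)/u''(0)$, i.e.\ \eqref{originextremerotational}.

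I expect the main obstacle to be the Taylor bookkeeping at $r=0$: one must verify carefully that $\varphi''(0)=G'(0)=G'''(0)=0$ (hence $v'(0)=0$), since it is exactly these vanishings that push the drift term into the critical order $\epsilon^{4}$ rather than below it, and the surviving coefficients are what pin down the $\tfrac12$ in $\alpha-u\approx-\tfrac12u''(0)r^{2}$, the $4$ in the Hardy inequality, and hence the $9/4$ in the rotationally symmetric case. A secondary point that needs a line of justification is that truncating the extremal profile $s^{-2}$ so that it vanishes at the origin --- as admissibility demands --- does not change the Hardy constant, which holds because the truncation contributes only bounded amounts to both integrals whereas a logarithmically divergent tail controls the ratio.
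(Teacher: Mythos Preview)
Your approach is correct and essentially identical to the paper's: both set $\alpha=u(0)$, localize the test function at the origin via a rescaling $\xi(r)=\psi(r/\epsilon)$, extract the leading-order coefficient from \eqref{indexVversion} using the parity-forced vanishings $\varphi''(0)=G'(0)=G'''(0)=v'(0)=0$, and reduce to the sharp weighted Hardy inequality $\int s^{5}\psi'^{2}\,ds\ge 4\int s^{3}\psi^{2}\,ds$ before computing $v''(0)$ by Taylor expansion. One small remark worth recording: carrying your substitution through literally gives
\[
3E(0)+12G''(0)\;<\;\frac{2u(0)}{u''(0)}\Big(\frac{G''(0)\,\varphi'''(0)}{\varphi'(0)}-G^{iv}(0)\Big),
\]
which carries an extra factor $G''(0)$ on the $\varphi'''(0)/\varphi'(0)$ term compared to the printed \eqref{originextremecondition}; your version is the one that correctly reduces to $\kappa(0)u(0)/u''(0)>9/4$ in the rotationally symmetric case (and agrees with the paper's own formula \eqref{vprimeprimezero} for $v''(0)$), so you have in effect caught a typo in the statement.
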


Finally we will use the results above to answer the three open questions posed by Drivas, Misio{\l}ek, Shi, and Yoneda in \cite{drivasmisiolek}.
\begin{enumerate}
\item For what values of $(m,n)$ does the Kolmogorov stream function $f(x,y) = -\cos{mx}\cos{ny}$ for $n$ and $m\ge n$ positive integers have a conjugate point along its geodesic? This had been established by Misio{\l}ek in \cite{misiolekconjugate} for the case $m=6$ and $n=2$, and extended in \cite{drivasmisiolek} to a condition equivalent to $n\ge 2$ and $m\ge \sqrt{3} (n +2/n)$. Theorem \ref{mainindexthm} does not produce conjugate points along Kolmogorov flows since we will show in Section \ref{kolmosection} that \eqref{indexQversion} is never satisfied. However we show that $(m,1)$ for $m\ge 2$, along with $(2,2)$, $(3,2)$, and $(3,3)$ all have conjugate points along their flows in the Appendix. We conjecture that all values of $(m,n)$ produce conjugate points except possibly $(1,1)$, but at this point we cannot prove the full result.
\item Can any Arnold stable solution on a surface $M$ have conjugate points? \textbf{Yes.} For simplicity take $u(r)=1$, so that $F'(r) = \varphi(r)$ and $\omega(r) = 2\varphi'(r)$. Then the ratio $\omega'(r)/F'(r)$ is given by $-2\kappa(r)$, in terms of the Gaussian curvature $\kappa$. On a disc with strictly negative curvature, the Arnold stability condition is satisfied, and Corollary \ref{killingcorollary} yields a conjugate point along the geodesic of rigid rotations. One could construct many other examples using the criterion of Theorem \ref{mainindexthm}. We give the details for a nonisochronal flow in Section \ref{examplesection}.
\item Are vortices with constant strength on elliptical domains free of conjugate points? \textbf{Yes.} In fact this is easy from the general index formula, which shows that if the vorticity is constant, then the third term in the index formula disappears (and what remains must be positive). See Proposition \ref{constantvorticityprop} below.
\end{enumerate}

\begin{theorem}\label{kolmocase}
Suppose $m$ and $n$ are both positive integers and $M=\mathbb{T}^2 = (\mathbb{R}/2\pi \mathbb{Z})^2$ with the usual flat metric $ds^2=dx^2+dy^2$. Let $f(x,y) = -\cos{mx}\cos{ny}$ be a Kolmogorov stream function, satisfying $\Delta f = -(m^2+n^2)f$ and thus \eqref{steadyeuler}. Consider the cell $C$ defined by $\lvert x\rvert \le \frac{\pi}{2m}$ and $\lvert y\rvert \le \frac{\pi}{2n}$, and define the radial coordinate $r$ by $r = \sqrt{1-f^2}$. Then for every nontrivial radial function $\xi$ with $\xi(0)=\xi(1)=0$, the index $I$ appearing in Theorem \ref{mainindexthm} is positive. Hence we cannot detect conjugate points by considering variations supported in a single cell.
\end{theorem}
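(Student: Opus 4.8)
The plan is to compute every ingredient of the polar chart of Lemma~\ref{polarcoordsgeneral} for this flow in closed form (in terms of the complete elliptic integrals $\ellipticK$ and $\ellipticE$) and then to show that the index $I$ is positive by putting it in the form \eqref{indexQversion} and checking that the function $M$ of \eqref{Mdefinition} satisfies $M(r)\le 0$ on all of $(0,1)$.

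I would begin with the one-cell data. Since $f<0$ in $C$, the radial coordinate $r=\sqrt{1-f^2}$ gives $f=F(r)=-\sqrt{1-r^2}$, so $F'(r)=r/\sqrt{1-r^2}$ and $R=1$. The angular velocity $u(r)$ is $2\pi$ divided by the period of the orbit $\{f=-\sqrt{1-r^2}\}$ in $C$; parametrizing this orbit by $p=\cos mx$ and using the first integral $\cos mx\cos ny=\sqrt{1-r^2}$, together with the factorization $\lvert\nabla f\rvert^2=\big(p^2-(1-r^2)\big)\big(n^2p^2-m^2(1-r^2)\big)/p^2$ on the orbit, the time integral reduces to a complete elliptic integral of the first kind whose modulus is exactly $r$---this is precisely why $r=\sqrt{1-f^2}$ is the natural radial coordinate---yielding $u(r)=\pi mn/\big(2\ellipticK(r)\big)$. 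In particular $u$ is strictly decreasing on $(0,1)$, so $u'$ is nowhere zero and Corollary~\ref{indexthmQversion} applies. Then $\varphi(r)=F'(r)/u(r)$, the vorticity is $\omega(r)=\Delta f=-(m^2+n^2)F(r)$ so that $\omega'(r)=-(m^2+n^2)F'(r)=-(m^2+n^2)u(r)\varphi(r)$, and averaging the second identity of Lemma~\ref{polarcoordsgeneral} over $\theta$ gives $\big(u(r)G(r)\big)'=\varphi(r)\omega(r)$, which integrates---using $\tfrac{d}{dr}\big(\ellipticE(r)-(1-r^2)\ellipticK(r)\big)=r\ellipticK(r)$---to $u(r)G(r)=\tfrac{2(m^2+n^2)}{\pi mn}\big(\ellipticE(r)-(1-r^2)\ellipticK(r)\big)$, hence $G$ explicitly.

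It remains to compute $E(r)=\tfrac{1}{2\pi}\int_0^{2\pi}g_{11}(r,\theta)\,d\theta$. Using $g_{11}=(\varphi^2+g_{12}^2)/g_{22}$ and $g_{22}=\lvert\nabla f\rvert^2/u^2$, the diagonal part contributes $F'(r)^2\,\overline{\lvert\nabla f\rvert^{-2}}$, which the orbit parametrization above turns into another combination of complete elliptic integrals, while the off-diagonal part is fixed by flatness of $M$ (equivalently, by integrability of the chart $(r,\theta)\mapsto(x,y)$). With $u$, $\varphi$, $G$, $E$ and $\omega'$ all in hand one forms $v$, $Q$, $M$ via \eqref{vdef}, \eqref{Qdef}, \eqref{Mdefinition} and shows $M(r)\le 0$ for every $r\in(0,1)$ (as a checkpoint one verifies $M(0)=0$; this is where the bulk of the work lies). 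By \eqref{indexQversion} the integrand of $I$ is then a sum of two nonnegative terms for every $\alpha\in\mathbb{R}$, so $I\ge 0$; and $I=0$ would force $\xi'=\tfrac{Q\varphi}{G}\xi$ with $\xi(0)=0$, hence $\xi\equiv 0$. Thus $I>0$ for every nontrivial radial $\xi$, so no variation supported in a single cell can make the index of Theorem~\ref{mainindexthm} negative.

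The main obstacle is this last step: computing $E$ cleanly---the component $g_{12}$ is the only datum of the chart not read off directly from $F$, $u$, and $\varphi$---and then verifying $M(r)\le 0$, which, once every quantity is written through $\ellipticK(r)$ and $\ellipticE(r)$, becomes an inequality between complete elliptic integrals to be proved by hand (from their defining integrals, the Legendre relation, or standard monotonicity bounds). Should $M$ fail to be nonpositive somewhere, the fallback is to keep $\alpha$ generic in \eqref{indexVversion}: when $\alpha\notin[\,0,mn\,]=[\,\inf_{(0,1)}u,\;\sup_{(0,1)}u\,]$ one completes the square, $(\alpha-u)^2$ being bounded below; when $\alpha$ lies in this range one uses that $u'$ is nowhere zero, so $\alpha-u$ has only simple zeros and the indefinite term $-2v'(\alpha-u)\xi^2$ is odd about each zero and cannot overcome the positive terms---in contrast to Corollary~\ref{uprimecorollary}, where it is a double zero of $\alpha-u$ that allows $I<0$.
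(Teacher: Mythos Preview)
Your overall strategy---put the index in the form \eqref{indexQversion} and show $M(r)\le 0$ on $(0,1)$---is exactly the paper's. Your computations of $F$, $u$, $\varphi$, $\omega$, and $G$ also match (the paper does these via the explicit Jacobi-elliptic parametrization of the flow rather than the first-integral reduction you sketch, but the outputs agree). The one substantive divergence is precisely at the obstacle you flag: the paper does \emph{not} compute $E(r)$ in closed form. It obtains only an integral representation involving the Jacobi Zeta function and explicitly says it cannot simplify further. Instead it bypasses $E$ altogether with a Cauchy--Schwarz estimate:
\[
E(r)G(r)=\frac{1}{4\pi^2}\Bigl(\int_0^{2\pi}g_{11}\,d\theta\Bigr)\Bigl(\int_0^{2\pi}g_{22}\,d\theta\Bigr)\ge\frac{1}{4\pi^2}\Bigl(\int_0^{2\pi}\sqrt{g_{11}g_{22}}\,d\theta\Bigr)^2\ge\varphi(r)^2,
\]
the last step using $g_{11}g_{22}-g_{12}^2=\varphi^2$. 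This replaces $M$ by the explicit upper bound $\overline{M}(r)=G(r)Q'(r)/\varphi(r)+Q(r)^2-1$, for which one can write everything through $\ellipticK$ and $\ellipticE$ and check negativity on $(0,1)$ (the paper does this via endpoint asymptotics and a plot). The paper also first observes that every ingredient scales homogeneously in $(m,n)$, so $M_{mn}=\tfrac{(m^2+n^2)^2}{m^2n^2}\,M_{11}$ and it suffices to treat $m=n=1$; you do not need this, but it declutters the formulas.

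So your plan is sound but would stall at the step you yourself identify; the missing idea is the Cauchy--Schwarz bound $EG\ge\varphi^2$, which removes the need to know $E$ at all. Your fallback (arguing directly with \eqref{indexVversion} and the simplicity of the zeros of $\alpha-u$) is not pursued in the paper and, as written, is too heuristic to close the argument---``odd about each zero'' does not by itself control the sign of an integral against an arbitrary $\xi^2$.
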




Finally we discuss the three-dimensional case. Here it is much simpler to find conjugate points: instead of constructing a function $g(t,r,\theta)$ with $Y=\sgrad g$ a divergence-free vector field on $M$ and trying to show that $I(Y,Y)<0$, it is sufficient to find a vector field $Y(t,\eta(t,p_0))$ along a \emph{single} Lagrangian trajectory $t\mapsto \eta(t,p_0)$ such that the pointwise index form
$$ I_{p_0}(Y,Y) = \int_0^T \langle Z(t)_{\eta(t,p_0)}, Z(t)_{\eta(t,p_0)}\rangle + dU^{\flat}\big(Y(t)_{\eta(t,p_0)}, Z(t)_{\eta(t,p_0)}\big) \, dt < 0, \qquad Z(t) = \frac{DY}{dt} - \nabla_{Y(t)}U.$$
Given such a field along one trajectory, we can build a variation field supported in a small neighborhood of it which will make the full index form \eqref{generalindexform} negative on $\Diffmu(M)$. See \cite{hardest} for the criterion and some examples, along with \cite{wkb} for an improved criterion that captures the locations of these conjugate points. Examples in the axisymmetric case were given in \cite{washabaugh}. The fact that variations are easy to find in three dimensions is a reflection of the fact that the Riemannian exponential map is not Fredholm, which allows conjugate points to cluster along geodesics and have infinite order; see \cite{EMP}. The fact that we can easily find an infinite dimensional space of variations (all of which have disjoint supports) means that there are infinitely many conjugate points on a finite interval; the fact that the exponential map is Fredholm in two dimensions prevents this (see Misio{\l}ek~\cite{misiolekmorse}) and means that the energy-reducing variations must have overlapping support and cannot be selected in a small neighborhood of any point or trajectory. The only exceptions are Corollaries \ref{uprimecorollary} and \ref{uprimeorigincorollary}, where the supports of energy-reducing variations are in a small neighborhood of the fastest or slowest trajectory. 


Here is a summary of the paper's contents. In Section \ref{backgroundsection} we provide full details on conjugate points along 2D fluid flows, including the simplification of the index form in two dimensions along either steady or nonsteady flows. We also describe the explicit Jacobi fields and conjugate points along rigid rotations of the $2$-sphere, which inspires the main construction of the paper. In Section \ref{polarlemmasection} we describe the general construction of polar coordinates in a cell. We also prove Lemma \ref{polarcoordsconverse} which shows that any stream function $F(r)$, area form coefficient $\varphi(r)$, and vorticity function $\omega(r)$ of the radial coordinate can appear on some curved surface, and provides a parameterization of all such surfaces. In Section \ref{mainproofsection} we prove our main Theorem \ref{mainindexthm} by considering simple explicit variations and the long-time limit. We also prove Corollary \ref{indexthmQversion} by showing how to derive formulas \eqref{indexVorigin} and \eqref{indexQversion} from \eqref{alphacondition}. In Section \ref{corollaryproofsection} we prove Corollary \ref{killingcorollary} giving the necessary and sufficient condition for an isochronal flow to have a conjugate point. We also prove Corollaries \ref{uprimecorollary} and \ref{uprimeorigincorollary}, which are closely related and give the only local criteria we have along 2D flows. In Section \ref{kolmosection} we work out the polar coordinate representation for the Kolmogorov flows in terms of Jacobian elliptic functions, and prove Theorem \ref{kolmocase} which shows that the index form is positive. In Section \ref{examplesection} we work out the details of four examples of nontrivial flows where the new criteria of this paper are effective. Finally in the Appendix we use Misio{\l}ek's criterion from \cite{misiolekconjugate} to find additional Kolmogorov flows with conjugate points.

\section*{Acknowledgements:} The author conducted this reesarch while partially funded by Simons Foundation Collaboration Grant 318969. The author thanks Martin Bauer, Theo Drivas, and Gerard Misio{\l}ek for helpful discussions, as well as the organizers of the Workshop on Small Scale Dynamics in Fluid Motion at the Simons Center for Geometry and Physics, where the work was completed.

\section{Background}\label{backgroundsection}

We will give a brief introduction to the geometry of volumorphisms and the Euler equation as a geodesic on this group. For more details in the smooth case,
see Arnold-Khesin~\cite{AK1998}, along with the paper of Ebin-Marsden~\cite{EM1970} where the Sobolev manifold structure was discussed and Misio{\l}ek~\cite{Mis1993} where the basic properties of the Jacobi equation, conjugate points, and the index form were demonstrated. For what we need, we can assume everything is $C^{\infty}$, though the rigorous approach requires only that the diffeomorphisms and velocity fields are in $H^s$ for $s>\dim{M}/2+1$, and everything we do will make sense in that context as well.

Suppose $M$ is a Riemannian manifold, possibly with boundary $\partial M$. The Euler equation for a time -dependent vector field $U$, divergence-free and tangent to the boundary, is given by
\begin{equation}\label{eulerequation}
\frac{\partial U}{\partial t} + U\cdot \nabla U = -\grad p, \qquad U(0) = U_0.
\end{equation}
Here $p$ is a real-valued function determined up to a constant (so that $\grad p$ is uniquely determined) by
$$ \Laplacian p = -\diver{(U\cdot \nabla U)}, \qquad \langle \nabla p, \normal\rangle = -\langle U\cdot \nabla U, \normal\rangle.$$
We can write this more simply in terms of the projection $P$:
\begin{equation}\label{projectiondef}
P(X) = X + \grad f, \qquad \Laplacian f = -\diver{X}, \quad \langle \grad f, \normal\rangle = -\langle X, \normal\rangle.
\end{equation}
For any vector field $X$ on $M$, we see that $P(X)$ is divergence-free and tangent to $\partial M$. Hence equation \eqref{eulerequation}
becomes
\begin{equation}\label{eulerprojection}
\frac{\partial U}{\partial t} + P(\nabla_U U) = 0, \qquad U(0)=U_0.
\end{equation}

The flow of $U$ is a volume-preserving diffeomorphism $\gamma(t)$ given by
\begin{equation}\label{flowequation}
\frac{\partial \gamma}{\partial t}(t,x) = U\big(t,\gamma(t,x)\big), \qquad \gamma(0,x) = x,
\end{equation}
and in terms of $\gamma$ the Euler equation becomes the second-order equation
\begin{equation}\label{geodesiceuler}
\frac{\partial^2 \gamma}{\partial t^2} = -\grad p\circ\gamma, \qquad \gamma(0)=\id, \quad \dot{\gamma}(0) = U_0.
\end{equation}
The map $\eta$ satisfies, for each fixed time, the volume-preserving condition $\Jac(\eta)\equiv 1$,
and maps the boundary of $M$ to itself by the conditions on $U$. Thus it can be viewed as a curve in the
infinite-dimensional space $\Diffmu(M)$ of volume-preserving diffeomorphisms, or volumorphisms. Under composition,
this is a group. We now consider the manifold structure on it.

Elements of the tangent space $T_{\eta}\Diffmu(M)$ at a volumorphism $\eta\in \Diffmu(M)$ are given by
right-translations $\mathbf{Y} = Y\circ\eta$, where $Y\in T_{\id}\Diffmu(M)$ is a divergence-free vector field tangent to $\partial M$.
 The kinetic energy defines a Riemannian metric on $\Diffmu(M)$ by the formula
$$ \langle \mathbf{Y}, \mathbf{Y}\rangle_{\eta} = \int_M \langle Y\circ\eta, Y\circ\eta\rangle \, d\mu = \int_M \langle Y,Y\rangle \, d\mu,$$
the last formula representing right-invariance of the metric and following from the change of variables formula and $\Jac(\eta)\equiv 1$.
Along a curve $\eta(t)$, a time-dependent vector field $\mathbf{Y}(t)$ along $\eta(t)$ is similarly given by $\mathbf{Y}(t) = Y(t)\circ\eta(t)$
for each $t$. Right-invariance of the Riemannian metric leads to right-invariance of the covariant derivative and curvature.
The covariant derivative of $\mathbf{Y}$ along $\eta$ is given by
\begin{equation}\label{covariantderivative}
\frac{D\mathbf{Y}}{dt} = \left( \frac{\partial Y}{\partial t} + P(\nabla_{V(t)}Y(t))\right)\circ\eta(t),
\end{equation}
where $V$ is the right-translated tangent vector of $\eta$, i.e., $\frac{d\eta}{dt} = V\circ\eta$, and $\nabla_VY$ is the covariant derivative computed pointwise in $M$.

The Riemannian curvature tensor along the curve $\eta$ is given by
\begin{equation}\label{riemann}
\mathbf{R}(\mathbf{Y}, \dot{\eta})\dot{\eta} = \Big(P(\nabla_YP(\nabla_VV) - \nabla_VP(\nabla_YV) - \nabla_{[Y,V]}V)\Big)\circ\eta,
\end{equation}
again using the right-translations $\dot{\eta}=V\circ\eta$ and $\mathbf{Y}=Y\circ\eta$. Its importance comes from the equation in variations: if $\eta(t,\delta)$ is a family of geodesics depending on a parameter $\delta$, with $\eta(t,0)=\gamma(t)$, then the variation field
$$\mathbf{J}(t) = \frac{\partial \eta}{\partial \delta}\Big|_{\delta=0}$$
is called a Jacobi field, and satisfies the equation
\begin{equation}\label{jacobiequation}
\frac{D^2 \mathbf{J}}{dt^2} + \mathbf{R}(\mathbf{J}, \dot{\gamma})\dot{\gamma} = 0.
\end{equation}
This is the linearization of the geodesic equation.
If there is a time $T>0$ such that $\mathbf{J}(0)=0$ and $\mathbf{J}(T)=0$, then we say that there is a monoconjugate point along $\gamma$.
The length of the geodesic is infinitesimally minimizing up to the first monoconjugate point, and ceases to be minimizing after that. Jacobi
fields vanishing at two points represent stable perturbations of the particle paths, which return (to first order) to where they would have been
without the perturbation. By right-invariance, if $\mathbf{J}(t) = J(t)\circ\eta(t)$ for $J(t)\in T_{\id}\Diffmu(M)$, and $\dot{\gamma}(t)=U(t)\circ\eta(t)$, then the Jacobi equation \eqref{jacobiequation} may be rewritten as an equation on the Lie algebra:
\begin{equation}\label{jacobiequationtranslate}
\left(\frac{\partial}{\partial t} + \nabla_{U(t)}\right)^2 J(t) + \mathbf{R}\big(J(t), U(t)\big)U(t) = 0.
\end{equation}

The fact that the geodesic equation \eqref{geodesiceuler} splits into the two decoupled equations \eqref{eulerequation} is a consequence
of the right-invariance of the metric. It has the further consequence that the Jacobi equation \eqref{jacobiequation} splits into decoupled equations:
the system is given by
\begin{equation}\label{jacobisplit}
\frac{\partial J}{\partial t} + \nabla_UJ - \nabla_JU = K, \qquad \frac{\partial K}{\partial t} + P(\nabla_UK + \nabla_KU) = 0,
\end{equation}
where we view the first equation as defining $K$. The second equation is the linearized Euler equation, which is used to study perturbations
of the velocity field itself without looking at the perturbed particle paths. Solving the Jacobi equation in the form \eqref{jacobiequationtranslate} is rather complicated due to the fact that the curvature operator \eqref{riemann} is difficult to compute explicitly, due to the projections which
require solution of the nonlocal equation \eqref{projectiondef}. Even the decoupled version \eqref{jacobisplit} is difficult to find explicit solutions
for in general due to those same projections.
See \cite{prestonstability} for general discussion on this point and some simple cases. The following example is sketched in \cite{prestonthesis}, but we do not know if the full solution has ever appeared in print.

\begin{example}\label{sphericaljacobis}
On a simply-connected surface $M$, we may write $J = \sgrad{g}$ and $K = \sgrad{h}$, for some time-dependent functions $g$ and $h$ on $M$.
Equations \eqref{jacobisplit} then become the system
\begin{equation}\label{jacobisplit2d}
\frac{\partial g}{\partial t} + \{f,g\} = h, \qquad \frac{\partial \Delta h}{\partial t} + \{f, \Delta h\} + \{h, \Delta f\} = 0,
\end{equation}
where $U = \sgrad f$. See \cite{prestonstability} for details.

Now suppose $M$ is the round sphere, with spherical polar coordinates giving the metric  $ds^2 = dr^2 + \sin^2{r} d\theta$ and stream function $f = -\cos{r}$. Then the steady Euler velocity field is given by $U = \partial_{\theta}$, and the vorticity is given by $\Delta f = 2f$. The system \eqref{jacobisplit2d} simplifies in this case to the system
\begin{equation}\label{jacobi2sphere}
\frac{\partial g}{\partial t} + \frac{\partial g}{\partial \theta} = h, \qquad
\frac{\partial\Delta h}{\partial t} + \frac{\partial \Delta h}{\partial \theta} + 2 \, \frac{\partial h}{\partial \theta} = 0.
\end{equation}

If $P^k_n$ denotes the Legendre polynomial for positive integers $k$ and $n$, then the general solution of \eqref{jacobi2sphere}
with $g(0,r,\theta)=0$ is given by
\begin{multline*}
g(t,r,\theta) =  \sum_{n=1}^{\infty} c_n t P_n(\cos{r}) \\ +
\sum_{n=1}^{\infty} \sum_{k=1}^n \sin{\left(\frac{kt}{n(n+1)}\right)} \, P^k_n(\cos{r})
\left( a_{nk} \cos{k\left( \theta - t + \frac{t}{n(n+1)}\right)} + b_{nk}
\sin{k\left( \theta - t + \frac{t}{n(n+1)}\right)}\right),
\end{multline*}
for arbitrary coefficients $a_{nk}$, $b_{nk}$, $c_n$. This can be checked by plugging in, based on the fact that
$$\Delta \big(P^k_n(\cos{r}) \cos{k\theta}\big) = -n(n+1) P^k_n(\cos{r}) \cos{k\theta}.$$

Setting $c_n=0$ since those Jacobi fields grow linearly in time, we find that conjugate points happen at times $\tau = \frac{\pi n(n+1)}{k}$ for positive integers $k,n$ with $k\le n$. In addition the basic Jacobi field stream functions are all of the form
$$ g(t,r,\theta) = \sin{\left(\frac{t}{T}\right)} \xi(r) \cos{k(\theta-\alpha t)}$$
for $\alpha = 1-\frac{1}{n(n+1)}$. In particular $\alpha$ can \emph{never} be zero. See Figure \ref{jacobispherepics}.
\end{example}

\begin{figure}[!ht]
\centering
$t=0$ \hspace{1.2in} $t=\pi$  \hspace{1.2in} $t=2\pi$  \hspace{1.2in} $t=3\pi$ \\
\includegraphics[scale=0.45]{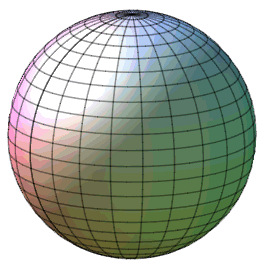} \quad 
\includegraphics[scale=0.45]{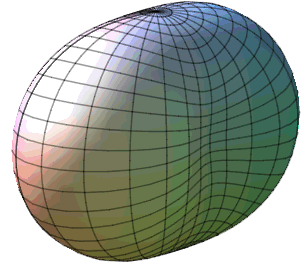} \quad
\includegraphics[scale=0.45]{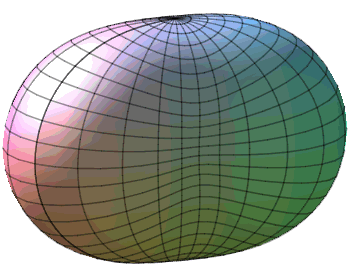} \quad
\includegraphics[scale=0.45]{spherejacobi-3.png}
\caption{The transported Jacobi field stream function $g(t,r,\theta+t)$, dragged by the uniform rotation, when $k=2$ and $n=2$, given by 
$g(t,r,\theta+t) = \sin{\frac{t}{3}} \sin^2{r} \sin{(2 \theta +\frac{t}{3})}$. As $t$ goes from $0$ to $\tau=3\pi$, we get a conjugate point as $g$ expands and shrinks, while rotating a quarter-turn around the sphere due to the drift.}\label{jacobispherepics}
\end{figure}

An easier way to detect conjugate points, without explicitly finding the Jacobi fields, is to use the index form. Multiplying the Jacobi equation \eqref{jacobiequation} by $\mathbf{J}(t)$ and integrating by parts from $t=0$ to $t=T$ gives the formula
$$ I(\mathbf{J},\mathbf{J}) = \int_0^T \left\langle \frac{D\mathbf{J}}{dt}, \frac{D\mathbf{J}}{dt}\right\rangle_{L^2(M)} - \langle \mathbf{R}(\mathbf{J},\dot{\gamma})\dot{\gamma}, \mathbf{J}\rangle_{L^2(M)} \, dt = 0$$
if $\mathbf{J}$ is a Jacobi field vanishing at $t=0$ and $t=T$. This index makes sense for any vector field along $\gamma$, not just a Jacobi field, and it can be shown that we have $I(\mathbf{Y},\mathbf{Y})=0$ for some $\mathbf{Y}$ satisfying $\mathbf{Y}(0)=\mathbf{Y}(T)=0$ if and only if $\mathbf{Y}$ is a Jacobi field. Furthermore if $I(\mathbf{Y},\mathbf{Y})<0$ for some such field $\mathbf{Y}$, then there is a Jacobi field $\mathbf{J}$ along $\gamma$ vanishing at $t=0$ and for $t=\tau$ for some $\tau<T$. Establishing negativity of the index form for some vector field $\mathbf{Y}$ is thus an effective way to show there must be a conjugate point without actually having to find it.

Since the metric, covariant derivative, and Riemann curvature tensor are all right-invariant, the index form is as well, and we can write (using the formulas \eqref{covariantderivative} and \eqref{riemann})
\begin{multline}\label{indexformrightinvariant}
I(Y,Y) = \int_0^T \int_M \left\langle\frac{\partial Y}{\partial t} + P(\nabla_UY), \frac{\partial Y}{\partial t} + P(\nabla_UY)\right\rangle
\\
- \langle P\big(\nabla_YP(\nabla_UU)\big) - P\big(\nabla_UP(\nabla_YU)\big) + P(\nabla_{[U,Y]}U), Y\rangle \, d\mu \, dt.
\end{multline}
In fact it turns out this formula simplifies quite a lot, and in fact the divergence-free projections $P$ do not need to be computed at all. The following version of the formula is similar to one that appears in the author's work \cite{hardest}. (Note that we do this slightly differently from Misio{\l}ek~\cite{misiolekconjugate} who first assumes that $U$ is a steady solution of the Euler equation.)

\begin{lemma}\label{indexsimplifiedlemma}
Suppose $U$ solves the Euler equation \eqref{eulerprojection} on $[0,T]$. For a time-dependent family of divergence-free vector fields $Y(t)$ on $M$, tangent to the boundary of $M$, and vanishing at $t=0$ and $t=T$, the index form \eqref{indexformrightinvariant} takes the form
\begin{equation}\label{indexformhardest}
I(Y,Y) = \int_0^T \int_M \langle Z(t), Z(t)\rangle + dU(t)^{\flat}\big(Y(t),Z(t)\big) \, d\mu \, dt, \qquad Z(t) := \dot{Y}(t) + [U(t),Y(t)].
\end{equation}
\end{lemma}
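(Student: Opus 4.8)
The plan is to reduce \eqref{indexformrightinvariant} to \eqref{indexformhardest} by pure linear algebra of the projection $P$ together with two integrations by parts (one in space, one in time). Three facts will be used repeatedly: (i) $P$ is the $L^2(M)$-orthogonal projection onto divergence-free fields tangent to $\partial M$, hence self-adjoint and idempotent, so $\langle P(X),W\rangle_{L^2} = \langle X,W\rangle_{L^2}$ whenever $W$ is divergence-free and tangent to $\partial M$; (ii) since $Y(t)$ is divergence-free and tangent to $\partial M$ for every $t$, the same holds for $\dot Y$ and for $[U,Y]$, hence for $Z = \dot Y + [U,Y]$, so $P(Z)=Z$ and $P(\dot Y)=\dot Y$; (iii) for divergence-free $U$ tangent to $\partial M$, $\int_M \langle \nabla_U A,B\rangle\,d\mu = -\int_M \langle A,\nabla_U B\rangle\,d\mu$ with no boundary term. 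I abbreviate $W := \nabla_Y U$ and use throughout the identity $\nabla_U Y = Z - \dot Y + W$, which is merely a rearrangement of $Z = \dot Y + \nabla_U Y - \nabla_Y U$.

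For the kinetic term, observe that $\dot Y + P(\nabla_U Y) = P(\dot Y + \nabla_U Y) = P(Z + W) = Z + P(W)$ by (ii); squaring and using (i)--(ii) to replace $\langle Z,P(W)\rangle$ by $\langle Z,W\rangle$, the kinetic integrand contributes, after integrating over $M$, the quantity $\int_M \big(\langle Z,Z\rangle + 2\langle Z,W\rangle + \|P(W)\|^2\big)\,d\mu$ at each time.

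For the curvature term in \eqref{indexformrightinvariant}, the outermost $P$ in each of its three pieces may be discarded because it meets the divergence-free boundary-tangent field $Y$ in the $L^2$ pairing. In the piece $-\langle \nabla_Y P(\nabla_U U),Y\rangle$ I use the Euler equation \eqref{eulerprojection} in the form $P(\nabla_U U) = -\partial_t U$ together with $\nabla_Y(\partial_t U) = \partial_t W - \nabla_{\dot Y}U$, and integrate by parts in $t$ (endpoint terms vanish since $Y(0)=Y(T)=0$); this piece contributes $\int_0^T\!\!\int_M \big(-\langle W,\dot Y\rangle - \langle \nabla_{\dot Y}U,Y\rangle\big)\,d\mu\,dt$. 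In the piece $\langle \nabla_U P(W),Y\rangle$ I integrate by parts in space via (iii) to get $-\langle P(W),\nabla_U Y\rangle$, then substitute $\nabla_U Y = Z - \dot Y + W$ and drop the $P$'s meeting $Z$ and $\dot Y$; this piece contributes $\int_0^T\!\!\int_M \big(-\langle W,Z\rangle + \langle W,\dot Y\rangle - \|P(W)\|^2\big)\,d\mu\,dt$. In the piece $-\langle \nabla_{[U,Y]}U,Y\rangle$ I expand $[U,Y] = \nabla_U Y - \nabla_Y U$ and again write $\nabla_U Y = Z - \dot Y + W$, which collapses it to $\int_0^T\!\!\int_M \big(-\langle \nabla_Z U,Y\rangle + \langle \nabla_{\dot Y}U,Y\rangle\big)\,d\mu\,dt$. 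Summing the three pieces, the terms $\langle W,\dot Y\rangle$ and $\langle \nabla_{\dot Y}U,Y\rangle$ cancel in pairs, leaving $\int_0^T\!\!\int_M \big(-\langle W,Z\rangle - \|P(W)\|^2 - \langle \nabla_Z U,Y\rangle\big)\,d\mu\,dt$ for the curvature term.

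Adding the kinetic and curvature contributions, the two copies of $\|P(W)\|^2$ cancel and we are left with $\int_0^T\!\!\int_M \big(\langle Z,Z\rangle + \langle \nabla_Y U,Z\rangle - \langle \nabla_Z U,Y\rangle\big)\,d\mu\,dt$. Since the Levi-Civita connection is torsion-free, $dU^\flat(Y,Z) = \langle \nabla_Y U,Z\rangle - \langle \nabla_Z U,Y\rangle$, which is exactly \eqref{indexformhardest}. I expect the only real obstacle to be bookkeeping: a $P$ may be dropped only when it is paired with a divergence-free boundary-tangent field (so \emph{not} when it meets $\nabla_Y U$ or $\nabla_U Y$, which are generically not divergence-free), and one must verify that the two integrations by parts generate precisely the cross terms $\langle W,\dot Y\rangle$ and $\langle \nabla_{\dot Y}U,Y\rangle$ that cancel. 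No individual step is deep; the content lies in carrying out these cancellations consistently.
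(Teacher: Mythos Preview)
Your proof is correct and follows essentially the same strategy as the paper's: both rewrite the covariant derivative as $Z+P(\nabla_Y U)$, drop the outermost projections against divergence-free test fields, invoke the Euler equation $P(\nabla_U U)=-\partial_t U$, integrate by parts once in space and once in time, and finish with the Cartan-type identity $dU^\flat(Y,Z)=\langle\nabla_Y U,Z\rangle-\langle\nabla_Z U,Y\rangle$. The only cosmetic difference is the order in which you handle the $\nabla_Y P(\nabla_U U)$ term (you apply the Euler equation before the time integration by parts, while the paper first moves $\nabla_Y$ off via a spatial integration by parts), but the cancellations and the final outcome are identical.
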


\begin{proof}
First express $\tfrac{\partial Y}{\partial t} = Z - [U,Y]$ where $Z$ is defined in terms of $Y$ by \eqref{indexformhardest}. Then the covariant derivative of $Y$ is
\begin{equation}\label{covarderivz}
\frac{\partial Y}{\partial t} + P(\nabla_UY) = Z + P(\nabla_UY) - [U,Y] = Z + P(\nabla_YU),
\end{equation}
using the fact that
$$ P(\nabla_UY) - P(\nabla_YU) = P(\nabla_UY-\nabla_YU) = P([U,Y]) = [U,Y],$$
since the Lie bracket of two vector fields in $T_{\id}\Diffmu(M)$ is also in it by the Lie group property.
Observe that $Z(t)$ must also be in $T_{\id}\Diffmu(M)$.

Using formula \eqref{covarderivz}, the index form becomes
\begin{multline}\label{indexstep1}
I(Y,Y) = \int_0^T \int_M \langle Z + P(\nabla_YU), Z + P(\nabla_YU) \rangle
- \langle P\big(\nabla_YP(\nabla_UU)\big), Y\rangle \\
+ \langle P\big(\nabla_UP(\nabla_YU)\big),Y\rangle - \langle P(\nabla_{[U,Y]}U), Y\rangle \, d\mu \, dt.
\end{multline}
To simplify further, we use the fact that if $V$, $W$, and $X$ are any fields in $T_{\id}\Diffmu(M)$, then we have
\begin{equation}\label{trivialprojection}
\int_M \langle P(\nabla_VW), X\rangle \, d\mu = \int_M \langle \nabla_VW, X\rangle \, d\mu,
\end{equation}
since $P$ is the $L^2$ orthogonal projection onto $T_{\id}\Diffmu(M)$ and $X$ is already in that subspace. Thus the index \eqref{indexstep1} becomes
\begin{multline}\label{indexstep2}
I(Y,Y) = \int_0^T \int_M \langle Z, Z\rangle + 2\langle \nabla_YU, Z\rangle + \langle P(\nabla_YU),\nabla_YU \rangle
- \langle \nabla_YP(\nabla_UU), Y\rangle \\
 + \langle \nabla_UP(\nabla_YU),Y\rangle - \langle \nabla_{[U,Y]}U, Y\rangle \, d\mu \, dt.
\end{multline}

We next use integration by parts, based on the fact that $\int_M \langle X,\grad f\rangle \, d\mu = 0$ for any function $f$ and any element $X\in T_{\id}\Diffmu(M)$. This implies that if $V, W,X\in T_{\id}\Diffmu(M)$, then
\begin{equation}\label{integrationbyparts}
\begin{split}
\int_M \langle \nabla_VW, X\rangle \, d\mu &= \int_M \langle V, \grad\langle W,X\rangle \rangle \, d\mu -\int_M \langle W, \nabla_VX\rangle \, d\mu\\
&= -\int_M \langle W, \nabla_VX\rangle \, d\mu.
\end{split}
\end{equation}
Using this in \eqref{indexstep2} gives
\begin{multline}\label{indexstep3}
I(Y,Y) = \int_0^T \int_M \langle Z, Z\rangle + 2\langle \nabla_YU, Z\rangle + \langle P(\nabla_YU),\nabla_YU \rangle \\
+ \langle P(\nabla_UU), \nabla_YY\rangle - \langle P(\nabla_YU),\nabla_UY\rangle - \langle \nabla_{[U,Y]}U, Y\rangle \, d\mu \, dt.
\end{multline}
Note that we can combine the two terms containing $P(\nabla_YU)$ to get
$$ \int_M \langle P(\nabla_YU),\nabla_YU \rangle - \langle P(\nabla_YU),\nabla_UY\rangle \, d\mu = -\int_M \langle P(\nabla_YU),[U,Y]\rangle \, d\mu = -\int_M \langle \nabla_YU, [U,Y]\rangle,$$
using \eqref{trivialprojection} again. In addition the fact that $U$ solves the Euler equation means that $P(\nabla_UU) = -\tfrac{\partial U}{\partial t}$.
With these two facts in mind, the index form \eqref{indexstep3} now becomes
\begin{equation}\label{indexstep4}
I(Y,Y) = \int_0^T \int_M \langle Z, Z\rangle + 2\langle \nabla_YU, Z\rangle - \langle \nabla_YU,[U,Y] \rangle
- \langle \tfrac{\partial U}{\partial t}, \nabla_YY\rangle - \langle \nabla_{[U,Y]}U, Y\rangle \, d\mu \, dt.
\end{equation}

Integrating by parts in time, using that $Y(0)=Y(T)=0$, gives for the $U_t$ term that
\begin{align*}
-\int_0^T \int_M \langle \tfrac{\partial U}{\partial t}, \nabla_YY\rangle\, d\mu \, dt
&= \int_0^T \int_M \langle U, \nabla_{\tfrac{\partial Y}{\partial t}}Y \rangle + \langle U, \nabla_Y \tfrac{\partial Y}{\partial t} \rangle \, d\mu \,dt \\
&= \int_0^T \int_M \langle U, \nabla_{Z-[U,Y]}Y\rangle + \langle U, \nabla_Y(Z-[U,Y]) \, d\mu\, dt\\
&= \int_0^T \int_M \langle \nabla_{[U,Y]}U, Y\rangle - \langle \nabla_Z U, Y\rangle + \langle \nabla_YU, [U,Y]\rangle - \langle \nabla_YU, Z\rangle \, d\mu \,dt.
\end{align*}
Inserting this into \eqref{indexstep4} gives, after some cancellations, that
$$ I(Y,Y) = \int_0^T \int_M \langle Z, Z\rangle + \langle \nabla_YU, Z\rangle - \langle \nabla_Z U, Y\rangle  \, d\mu \, dt.$$
This becomes \eqref{indexformhardest} upon realizing that Cartan's formula gives
\begin{equation}\label{cartanformula}
dU^{\flat}(Y,Z) = \langle \nabla_YU, Z\rangle - \langle \nabla_ZU, Y\rangle.
\end{equation}
\end{proof}

In the special case where $U$ is independent of time, Misio{\l}ek~\cite{misiolekconjugate} obtained a simple criterion for
existence of conjugate points, which we reproduce below.

\begin{theorem}[Misio{\l}ek]\label{misiolektheorem}
Suppose $U$ is a steady (time-independent) solution of the Euler equation \eqref{eulerequation}. If there is a $W\in T_{\id}\Diffmu(M)$ such that
\begin{equation}\label{misiolekcriterion}
MC(W,W) := \frac{\langle \nabla_{[U,W]}U + \nabla_U[U,W], W\rangle_{L^2(M)}}{\langle W,W\rangle_{L^2(M)}} > 0,
\end{equation}
then there is eventually a conjugate point along the geodesic $\gamma$ which is the flow of $U$, i.e., for some $\tau>0$ there is a Jacobi field
$\mathbf{J}(t)$ along $\gamma$ such that $\mathbf{J}(0)=\mathbf{J}(\tau)=0$.
\end{theorem}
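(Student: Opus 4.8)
The plan is to prove the criterion by exhibiting an explicit energy-reducing variation field and evaluating the simplified index form of Lemma \ref{indexsimplifiedlemma} on it. Since $U$ is steady it solves \eqref{eulerprojection} on every interval $[0,T]$, so for any time-dependent divergence-free field $Y(t)$ tangent to $\partial M$ with $Y(0)=Y(T)=0$ we may use \eqref{indexformhardest}, namely $I(Y,Y) = \int_0^T\int_M \langle Z,Z\rangle + dU^\flat(Y,Z)\,d\mu\,dt$ with $Z = \dot Y + [U,Y]$. I would test with the separable field $Y(t) = \phi(t)\,W$, where $W$ is the given field satisfying $MC(W,W)>0$ and $\phi\colon[0,T]\to\mathbb{R}$ is a scalar profile with $\phi(0)=\phi(T)=0$ to be chosen at the end. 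Then $Z(t) = \phi'(t)\,W + \phi(t)\,[U,W]$, and expanding $\langle Z,Z\rangle$ and $dU^\flat(Y,Z)$ by bilinearity (using $dU^\flat(W,W)=0$) shows that every cross term carries the factor $\phi\phi' = \tfrac12(\phi^2)'$, which integrates to zero over $[0,T]$. What survives is
$$I(Y,Y) = \Big(\int_0^T\phi'(t)^2\,dt\Big)\,\|W\|_{L^2}^2 + \Big(\int_0^T\phi(t)^2\,dt\Big)\Big(\|[U,W]\|_{L^2}^2 + \int_M dU^\flat(W,[U,W])\,d\mu\Big).$$

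The heart of the argument is the algebraic identity
$$\|[U,W]\|_{L^2}^2 + \int_M dU^\flat(W,[U,W])\,d\mu = -\big\langle \nabla_{[U,W]}U + \nabla_U[U,W],\,W\big\rangle_{L^2} = -MC(W,W)\,\|W\|_{L^2}^2.$$
To prove it I would set $\beta = [U,W]$, note that $\beta\in T_{\id}\Diffmu(M)$ so that the integration-by-parts formula \eqref{integrationbyparts} applies to it, expand $dU^\flat(W,\beta) = \langle\nabla_W U,\beta\rangle - \langle\nabla_\beta U,W\rangle$ via \eqref{cartanformula}, integrate by parts to rewrite $\langle\nabla_U\beta,W\rangle_{L^2}$ as $-\langle\beta,\nabla_U W\rangle_{L^2}$, and then collapse everything using the torsion-free identity $\nabla_W U - \nabla_U W = -[U,W] = -\beta$. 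This is the single computation that has to be carried out with care about signs; together with the fact that steadiness of $U$ removes any $\partial_t U$ term from \eqref{indexformhardest} and makes the test field $\phi(t)W$ natural, it is exactly where the hypotheses are used. I expect this identity to be the only genuine obstacle; the rest is bookkeeping.

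Granting the identity, $I(Y,Y) = \|W\|_{L^2}^2\big(\int_0^T\phi'(t)^2\,dt - MC(W,W)\int_0^T\phi(t)^2\,dt\big)$. The Dirichlet Rayleigh quotient $\big(\int_0^T\phi'(t)^2\,dt\big)\big/\big(\int_0^T\phi(t)^2\,dt\big)$ over profiles vanishing at $0$ and $T$ has minimum $\pi^2/T^2$, attained at $\phi(t)=\sin(\pi t/T)$, so this choice gives $I(Y,Y) = \tfrac12\|W\|_{L^2}^2\big(\pi^2/T - T\cdot MC(W,W)\big)$, which is strictly negative as soon as $T^2 > \pi^2/MC(W,W)$. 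Fixing such a $T$, the variation field $\mathbf{Y}(t) = Y(t)\circ\eta(t)$ satisfies $\mathbf{Y}(0)=\mathbf{Y}(T)=0$ and $I(\mathbf{Y},\mathbf{Y})<0$, so by the index-form criterion recalled above in this section there is a Jacobi field along $\gamma$ vanishing at $t=0$ and at some $\tau<T$; that is, $\gamma(\tau)$ is conjugate to $\gamma(0)=\id$.
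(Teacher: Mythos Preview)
Your proof is correct and follows essentially the same route as the paper: the same separable test field $Y(t)=\sin(\pi t/T)\,W$ plugged into the simplified index form \eqref{indexformhardest}, the same cancellation of the $\phi\phi'$ cross terms, and the same algebraic identity reducing $\|[U,W]\|^2+\int_M dU^\flat(W,[U,W])\,d\mu$ to $-MC(W,W)\,\|W\|^2$ via \eqref{cartanformula} and \eqref{integrationbyparts}. Your Rayleigh-quotient framing is a slightly cleaner way to see why large $T$ forces $I<0$, but the content is identical to the paper's $T\to\infty$ limit.
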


\begin{proof}
For $T>0$ we take a test field of the form
\begin{equation}\label{misiolektest}
Y(t,x) = \sin{\left(\frac{t}{T}\right)} \, W(x), \qquad \text{where } W\in T_{\id}\Diffmu(M).
\end{equation}
Obviously $Y(0)=Y(\pi T)=0$.
Then $Z$ from \eqref{indexformhardest} becomes
$$Z(t) = \frac{1}{T} \cos{\left(\frac{t}{T}\right)} \, W + \sin{\left(\frac{t}{T}\right)} \, [U,W].$$
Performing the time integration first, we easily get
$$ \int_0^{\pi T} \langle Z,Z\rangle + dU^{\flat}(Y,Z) \, dt =
\frac{\pi T}{2} \Big( \langle [U,W], [U,W]\rangle + dU^{\flat}\big(W,[U,W]\big) + \frac{1}{T^2} \langle W,W\rangle\Big).$$
Thus we compute in the limit as $T\to\infty$ that
\begin{equation}\label{misioleklimit}
\lim_{T\to \infty} \frac{2}{\pi T} I(Y,Y) = \int_M \langle [U,W], [U,W]\rangle + dU^{\flat}\big(W, [U,W]\big) \, d\mu.
\end{equation}
If the right side is negative for some $W$, then for sufficiently large $T$ the index form $I(Y,Y)$ will be negative, and hence there will be a conjugate point location occurring at some $\tau<\pi T$.

The right side of \eqref{misioleklimit} can be written using the Cartan formula \eqref{cartanformula} again to get
\begin{align*}
\int_M \langle [U,W], [U,W]\rangle + dU^{\flat}\big(W, [U,W]\big) \, d\mu
&= \int_M  \langle [U,W], [U,W]\rangle + \langle \nabla_WU, [U,W]\rangle - \langle \nabla_{[U,W]}U, W\rangle \, d\mu \\
&= \int_M \langle \nabla_UW, [U,W]\rangle - \langle \nabla_{[U,W]}U,W\rangle \, d\mu \\
&= -\langle W,W\rangle_{L^2(M)} MC(W,W),
\end{align*}
using the integration-by-parts formula \eqref{integrationbyparts} in the last line. Thus negativity of \eqref{misioleklimit} is equivalent to positivity of $MC(W,W)$.
\end{proof}

The expression $MC(W,W)$ is called the \emph{Misio{\l}ek curvature} of $W$, and Theorem \ref{misiolektheorem} is called the M-criterion in \cite{tauchiyonedaellipsoid}--\cite{tauchiyonedaellipsoid}.

Now let us specify to the two-dimensional case, not necessarily assuming that $U$ is steady (though in our examples it will be). To make things simpler, we assume that either $M$ is simply-connected or that at least the support of $Y$ is in a simply-connected cell (which will happen later when $M=\mathbb{T}^2$). In this case a divergence-free vector field $Y$ can be expressed as the skew-gradient of a stream function $g$, defined in terms of the area form $\mu$ so that
$$ 
\mu(X, \sgrad g) = dg(X) \text{ for every vector field $X$ on $M$.}$$
Since $Y$ must be tangent to the boundary of $M$, we can show that $g$ must be constant on the boundary of $M$, and we can choose this constant to be zero. Lemma \ref{indexsimplifiedlemma} then becomes the following.

\begin{corollary}\label{indexformcoro2D}
Suppose $M$ is a simply-connected surface (or portion of a larger surface), and that $U=\sgrad f$ solves the Euler equation \eqref{eulerprojection} on $[0,T]$. For a time-dependent family of functions $g(t)$ on $M$, vanishing on the boundary of $M$ and at $t=0$ and $t=T$, the index form \eqref{indexformhardest} for $Y=\sgrad g$ becomes
\begin{equation}\label{indexformhardestsgrad}
I(Y,Y) = \int_0^T \int_M \langle \nabla h, \nabla h\rangle + \Delta f \{g,h\}  \, d\mu \, dt, \qquad h := \frac{\partial g}{\partial t} + \{f,g\}.
\end{equation}
\end{corollary}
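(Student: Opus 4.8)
The plan is to specialize Lemma \ref{indexsimplifiedlemma} to two dimensions by translating every term of \eqref{indexformhardest} from the language of divergence-free vector fields into the language of stream functions via $Y = \sgrad g$, $U = \sgrad f$. First I would note that on a simply-connected surface the map $g \mapsto \sgrad g$ is a linear isomorphism onto divergence-free fields tangent to the boundary (modulo constants, which we fix by the boundary normalization), and that it intertwines the Lie bracket with the Poisson bracket: $[\sgrad f, \sgrad g] = \sgrad\{f,g\}$. Hence $Z(t) = \dot Y + [U,Y] = \sgrad\big(\partial_t g + \{f,g\}\big) = \sgrad h$ with $h$ as defined in the statement, and the first term $\langle Z,Z\rangle$ of the pointwise integrand integrates to $\int_M \langle \sgrad h, \sgrad h\rangle\,d\mu = \int_M \langle \nabla h, \nabla h\rangle\,d\mu$, using that $\sgrad h$ and $\nabla h$ have the same pointwise length (they are related by a pointwise rotation by $\pi/2$ in the metric).

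The only real work is the term $\int_M dU^\flat(Y,Z)\,d\mu$. Here I would use the identity $dU^\flat = -(\Delta f)\,\mu$ for $U = \sgrad f$ on a surface; this is the standard fact that the vorticity two-form of $\sgrad f$ is $-\Delta f$ times the area form (signs depending on orientation conventions, to be fixed to match the rest of the paper). Then $dU^\flat(Y,Z) = -(\Delta f)\,\mu(\sgrad g, \sgrad h)$, and by the defining property $\mu(X,\sgrad h) = dh(X)$ applied with $X = \sgrad g$ we get $\mu(\sgrad g, \sgrad h) = -\mu(\sgrad h, \sgrad g) = -dh(\sgrad g) = -\{h,g\} = \{g,h\}$. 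Combining, $dU^\flat(Y,Z) = -(\Delta f)\{g,h\}$; matching this against the claimed $+\Delta f\,\{g,h\}$ in \eqref{indexformhardestsgrad} simply pins down which sign convention for $\sgrad$ and for $dU^\flat$ the paper is using, and I would state that convention explicitly so the formula comes out as written. The boundary and endpoint hypotheses ($g$ vanishing on $\partial M$ and at $t=0,T$) are exactly what is needed to apply Lemma \ref{indexsimplifiedlemma}, since $g|_{\partial M} = 0$ forces $\sgrad g$ tangent to $\partial M$ and $g(0) = g(T) = 0$ forces $Y(0) = Y(T) = 0$.

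The main obstacle is purely bookkeeping with signs and orientation: reconciling the sign in $dU^\flat = \pm(\Delta f)\mu$, the sign in the definition of $\sgrad$ via $\mu(X,\sgrad g) = dg(X)$, and the antisymmetry of the Poisson bracket, so that the three minus signs collapse to give precisely $+\Delta f\,\{g,h\}$ rather than its negative. Once the conventions are fixed consistently this is immediate, and no analytic difficulty arises — the simply-connectedness is used only to guarantee the stream-function representation exists globally on the relevant region.
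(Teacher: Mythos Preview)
Your proposal is correct and follows essentially the same route as the paper's proof: translate $Y$, $Z$ into stream functions via $\sgrad$, use $[\sgrad f,\sgrad g]=\sgrad\{f,g\}$ to get $Z=\sgrad h$, note $\lvert\sgrad h\rvert^2=\lvert\nabla h\rvert^2$, and compute $dU^\flat(Y,Z)=\Delta f\,\mu(\sgrad g,\sgrad h)=\Delta f\,\{g,h\}$ via $dU^\flat=\Delta f\,\mu$ and the defining relation for $\sgrad$. Your sign bookkeeping is a bit tangled (with the paper's convention $\mu(X,\sgrad g)=dg(X)$ one has directly $dU^\flat=+\Delta f\,\mu$ and $\mu(\sgrad g,\sgrad h)=dh(\sgrad g)=\{g,h\}$, no minus signs needed), but you correctly flag this as the only issue and it resolves exactly as you anticipate.
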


\begin{proof}
See \cite{prestonstability} for details of these computations.
Expressing $U = \sgrad f$, the vorticity $2$-form becomes
$dU^{\flat} = \Delta f \, \mu$ in terms of the usual Laplacian on functions. With $Y=\sgrad g$ and $Z=\sgrad h$, the formula \eqref{indexformhardest} relating $Y$ and $Z$ can be written in terms of the stream functions as
\begin{equation}\label{streamfunctionYZ}
g_t + \{f,g\} = h
\end{equation}
where the Poisson bracket is defined in terms of the area form $\mu$ by the formula
$$ df\wedge dg = \{f,g\} \, \mu.$$
This follows from the fact that $[\sgrad g, \sgrad g] = \sgrad \{g,h\}$.

We then have $$dU^{\flat}(\sgrad g, \sgrad h) \, \mu = \Laplacian f \,\mu(\sgrad g, \sgrad h) \,\mu = (\Laplacian f) dg\wedge dh = (\Laplacian f) \{g,h\} \, \mu,$$ which gives the second term. The first term comes from the fact that if $\mu$ comes from a Riemannian metric, then $\langle \sgrad h,\sgrad h\rangle = \lvert \grad h\rvert^2$.
\end{proof}

Corollary \ref{indexformcoro2D} makes it easy to answer the third question posed by Drivas et al.~\cite{drivasmisiolek}: if the velocity field has constant vorticity, there can be no conjugate points. A special case of this is the rigid rotation of a disc by $U = \partial_{\theta}$, where the vorticity function is $\curl{U} = 2$, for which the absence of conjugate points (or indeed any positive curvature) is known~\cite{nonpositive}.

\begin{proposition}\label{constantvorticityprop}
If $M$ is simply connected and $U=\sgrad f$ where $\Laplacian f$ is constant on $M$ and $f$ is constant on $\partial M$, then $U$ is a steady solution of the Euler equation on $M$, and there are no conjugate points along the geodesic $\gamma$ which is the flow of $U$.
\end{proposition}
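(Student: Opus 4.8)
The plan is to apply Corollary \ref{indexformcoro2D} and show that the index form $I(Y,Y)$ is nonnegative for every admissible variation $g$, and in fact strictly positive unless $Y\equiv 0$. This suffices: if there were a conjugate point at some $\tau>0$, there would be a nontrivial Jacobi field $\mathbf{J}$ vanishing at $t=0$ and $t=\tau$, hence (extending by zero past $\tau$) an admissible field on any $[0,T]$ with $T>\tau$ for which $I$ is nonpositive, contradicting strict positivity. First I would note that $U=\sgrad f$ is a steady Euler solution: since $\Delta f$ is constant, $\{f,\Delta f\}=0$, which is exactly \eqref{steadyeuler}; so Corollary \ref{indexformcoro2D} applies with $U$ time-independent.

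Now write $\Delta f \equiv c$ for the constant vorticity. The index form \eqref{indexformhardestsgrad} becomes
\begin{equation*}
I(Y,Y) = \int_0^T \int_M \langle \nabla h, \nabla h\rangle + c\,\{g,h\} \, d\mu \, dt, \qquad h = \frac{\partial g}{\partial t} + \{f,g\}.
\end{equation*}
The key observation is that the second term integrates to zero. Indeed, $\{g,h\}\,\mu = dg\wedge dh = d(g\,dh)$, and since $g$ vanishes on $\partial M$, Stokes' theorem gives $\int_M \{g,h\}\,d\mu = 0$ for every fixed $t$. Therefore
\begin{equation*}
I(Y,Y) = \int_0^T \int_M \lvert \nabla h\rvert^2 \, d\mu \, dt \ge 0.
\end{equation*}
Strict positivity follows unless $h\equiv 0$ for almost every $t$; but $h\equiv 0$ means $g$ satisfies the transport equation $\partial_t g = -\{f,g\}$ with $g(0)=0$ (from $Y(0)=0$), whose unique solution is $g\equiv 0$, hence $Y\equiv 0$. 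So a nontrivial admissible variation always gives $I(Y,Y)>0$, and there are no conjugate points.

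I expect the main point requiring care to be the passage from ``$I(Y,Y)>0$ for all nontrivial admissible $Y$'' to ``no conjugate points,'' i.e., making sure the extension-by-zero argument is legitimate and that the relevant variational characterization of the first conjugate point (stated in Section \ref{backgroundsection}: $I(\mathbf{Y},\mathbf{Y})=0$ for some nontrivial $\mathbf{Y}$ with $\mathbf{Y}(0)=\mathbf{Y}(T)=0$ iff $\mathbf{Y}$ is a Jacobi field, and $I<0$ somewhere forces an earlier conjugate point) is invoked correctly — in particular that a Jacobi field vanishing at $0$ and $\tau$ extended by $0$ to $[0,T]$ is continuous and piecewise smooth, so it is an admissible test field on which $I$ vanishes, contradicting strictness. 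The computation itself is routine; the Stokes' theorem step is the crux and uses only the boundary condition on $g$, not any structure of $f$ beyond constancy of $\Delta f$.
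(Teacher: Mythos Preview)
Your proof is correct and follows essentially the same approach as the paper: apply Corollary \ref{indexformcoro2D}, pull out the constant $\Delta f$, and use Stokes' theorem to kill the Poisson-bracket term, leaving only the nonnegative $\int\!\!\int |\nabla h|^2$. The one cosmetic difference is that the paper argues $\int_{\partial M} g\,dh = 0$ by first showing $h$ is constant on $\partial M$ (since $f$ and $g$ are both constant there, so $\{f,g\}|_{\partial M}=0$), whereas you use directly that $g$ vanishes on $\partial M$; your route is shorter and equally valid given the normalization already made in the paper. Your added justification of strict positivity via the transport equation and the extension-by-zero argument for ruling out conjugate points is a welcome elaboration of what the paper leaves implicit.
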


\begin{proof}
The curl of the Euler equation \eqref{eulerequation} is given by
$$ \frac{\partial \Laplacian f}{\partial t} + \{f, \Laplacian f\} = 0,$$
so that if $\Laplacian f$ is constant in both space and time, it solves this equation. Note that $M$ must have a boundary in order for any nonconstant function to have constant nonzero Laplacian, since on a compact manifold we have $\int_M \Laplacian f \, d\mu = 0$.

Writing $Y = \sgrad g$ and $Z=\sgrad h$ as in Corollary \ref{indexformcoro2D}, we must have that $g$ is constant on the boundary. Since $h = g_t + \{f,g\}$ and $f$ is also constant on the boundary, we know that $df$ and $dg$ are collinear, so that $df\wedge dg=0$ and thus $\{f,g\}=0$. So $h$ is also constant on the boundary.

The second term in \eqref{indexformhardestsgrad} is now given by
$$  \int_M \Delta f \{g,h\}  \, d\mu = \Delta f \int_M dg \wedge dh =
\Delta f \int_{\partial M} g dh,$$
and this is zero since $h$ is constant on $\partial M$ for each time $t$. Therefore only the positive-definite term remains in \eqref{indexformhardestsgrad}, so $I(Y,Y)>0$ for every nontrivial field $Y$, and there can be no conjugate points along $\gamma$.
\end{proof}

If $\omega = \Delta f$ is piecewise constant instead of constant on all of $M$, the situation changes. Consider for example solutions of the Euler equation with $\omega = \curl{U}$ in $L^{\infty}(M)$. These exist as weak solutions of the Euler equation by a famous result of Yudovich~\cite{yudovich}. These are not in $H^s$ for $s>2$, and thus the results of Ebin-Marsden~\cite{EM1970} and \cite{Mis1993} do not apply; in particular there may not be a geodesic which integrates the velocity field, and the Jacobi equation may not be a reasonable ODE along the corresponding weak geodesic. Nonetheless one might expect to be able to make sense of a vorticity function that is very close to piecewise-constant on patches in a limiting sense if the index form still makes sense there. For this purpose if we suppose that $\omega$ is given by a nonzero constant $\omega_i$ on each simply connected open set $O_i\subset M$ and zero otherwise, then the second term of the index form \eqref{indexformhardestsgrad} would become
$$ \int_M \Delta f \{g, h\} \, d\mu =
\sum_i \omega_i \int_{O_i} dg\wedge dh = -\sum_i \omega_i \int_{\partial O_i} h \,dg,$$
and it is conceivable that this can be made negative for some choice of $g$. For example one could consider a Kirchoff ellipse, where there is a single patch of nonzero vorticity in an ellipse, which must then rotate at a constant speed. (See for example Majda-Bertozzi~\cite{majdabertozzi} for details.) This would take us too far afield here, but would be very interesting to study.

\section{Polar coordinates in a steady fluid cell}\label{polarlemmasection}

We make the following assumptions: there is an open cell $C\subset M$ for which the closure $\overline{C}$ is compact and simply connected, and in which a steady solution $U$ of the Euler equation $P(\nabla_UU) = 0$ is expressed by $U=\sgrad f$. We assume that $f$ has a local minimum on $C$ at some unique point $p\in C$, that $\nabla^2f$ is nondegenerate at $p$, and that $f=0$ on $\partial C$, and that $f$ has no critical points in $\overline{C}$ except for $p$ and points in $\partial C$. (In particular $f$ is a Morse function on $C$ and is always negative.) This is no loss of generality: adding a constant to $f$ does not change the velocity field $U$, and flipping the sign of $f$ would merely reverse the direction of the flow. The two main examples we have in mind are rotationally invariant functions $f(r)$ on rotationally symmetric surfaces with metric $ds^2 = dr^2 + \varphi(r)^2 \, d\theta^2$, and the function $f(x,y) = -\cos{mx}\cos{ny}$ on the flat torus $\mathbb{T}^2$, where in the latter case the cell is $C = (-\tfrac{\pi}{2m}, \tfrac{\pi}{2m})\times (-\tfrac{\pi}{2n}, \tfrac{\pi}{2n})$.

Since there are no other critical points, all level sets of $f$ except the singleton $\{p\}$ and the boundary $\partial C$ are one-dimensional compact manifolds, and hence are circles. The velocity field $U$ is nowhere zero on these level sets, since $\lvert U\rvert = \lvert \grad f\rvert$ is nowhere zero. By our choice of sign of $f$, these flows are all counterclockwise. Let $\gamma_t$ be the flow of $U$ defined by \eqref{flowdef}. Choose a curve $c\colon [0,R]\to M$ such that $c(0)=p$ and $c'(s)\ne 0$ for all $s\in [0,R]$. Then

\begin{lemma}\label{polarcoordsgeneral}
Suppose $f\colon M\to\mathbb{R}$ satisfies the steady Euler equation $\{f, \Delta f\} = 0$, and let $U=\sgrad f$.
Suppose $f$ has a nondegenerate local minimum at $p_0\in M$,
and no other critical points in a simply connected region $C$ with compact closure $\overline{C}$, with $f=0$ on $\partial C$. Then the level sets of $f$ contained in $C$ are all diffeomorphic to circles, and there is a polar coordinate chart $(r,\theta)$ for $0\le r\le R$ and $\theta\in [0,2\pi)$ for $C$ +with $r=0$ at $p_0$, $r=R$ on $\partial C$ such that $f(p) = F(r)$ for some $F\colon [0,R]\to \mathbb{R}$, with the area form given by
\begin{equation}\label{areaformpolar}
\mu = \varphi(r) \, dr\wedge d\theta
\end{equation}
for some function $\varphi\colon [0,R]\to \mathbb{R}_+$
satisfying
\begin{equation}\label{varphilimits}
\varphi(0)=0 \quad \text{and}\quad \varphi'(0)\ne 0
\end{equation}
and the velocity field satisfying
\begin{equation}\label{velocitypolar}
U = \frac{F'(r)}{\varphi(r)} \, \frac{\partial}{\partial \theta}.
\end{equation}
\end{lemma}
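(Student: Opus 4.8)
The plan is to construct the polar coordinates in two stages: first the radial coordinate $r$ from the level-set structure of $f$, then the angular coordinate $\theta$ using the flow $\gamma_t$, and finally to read off the form of $\mu$ and $U$ from the construction.

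For the radial coordinate, I would argue as follows. By the nondegenerate local minimum hypothesis and the Morse lemma, near $p_0$ the level sets $\{f = \text{const}\}$ are small circles shrinking to $p_0$. Away from $p_0$ there are no critical points in $\overline{C}$ other than boundary points, so each nonextreme level set $\{f = c\}$ with $c \in (F(0), 0)$ is a compact $1$-manifold with no boundary, hence a disjoint union of circles; simple connectedness of $C$ together with the fact that these level sets are all homologous as $c$ varies forces each to be a single circle (alternatively: the gradient flow of $f$ gives a diffeomorphism between consecutive level sets, so the topology cannot change, and it starts as one circle near $p_0$). Pick the transversal curve $c\colon[0,R]\to M$ with $c(0) = p_0$, $c'\ne 0$; since $f$ has no critical points along $c$ except possibly at $p_0$, after reparametrizing $c$ if necessary we may assume $f\circ c$ is strictly increasing, and we \emph{define} $r$ on $C$ by declaring the level set $\{f = F(r)\}$ to be the circle through $c(r)$, where $F := f\circ c$. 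Then $f = F(r)$ by construction, $F$ is smooth and strictly increasing with $F'(r) > 0$ for $r > 0$.

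For the angular coordinate, I would use that $U = \sgrad f$ is tangent to the level sets (since $df(U) = \mu(U,U)\cdot(\text{scalar}) = 0$, or directly $\{f,f\}=0$) and nonvanishing there (because $|U| = |\nabla f| \ne 0$ off critical points). Hence on each circle $\{r = \text{const}\}$ the flow $\gamma_t$ is periodic; let $\mathcal{T}(r)$ be its period, which depends only on $r$ since $f$ (and thus $U$) is constant on the level set — actually the period depends only on the level value because $U$ is tangent to it, so $\mathcal{T}$ is a function of $r$ alone. Define $\theta$ at the point $\gamma_t(c(r))$ to be $2\pi t/\mathcal{T}(r)$. This is well-defined modulo $2\pi$, smooth (the period is smooth by the implicit function theorem / regularity of the flow, using nondegeneracy at $p_0$ to handle $r\to 0$), and gives a genuine coordinate chart on $C\setminus\{p_0\}$ extending continuously at $r = 0$. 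In these coordinates $U = \tfrac{\partial}{\partial t} = \tfrac{2\pi}{\mathcal{T}(r)}\,\partial_\theta$, so $U = u(r)\partial_\theta$ with $u(r) = 2\pi/\mathcal{T}(r) > 0$.

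It remains to get the area form. Write $\mu = \varphi(r,\theta)\,dr\wedge d\theta$ in the chart; I need $\partial_\theta\varphi = 0$. This follows because $\gamma_t$ is area-preserving (it is the flow of the divergence-free $U$), and in the $(r,\theta)$ coordinates $\gamma_t$ acts by $(r,\theta)\mapsto(r,\theta + u(r)t)$, a shear in $\theta$; pulling back $\mu$ under this shear and using $\gamma_t^*\mu = \mu$ forces $\varphi$ to be $\theta$-independent. (Equivalently, $\mathcal{L}_U\mu = 0$ expands to $\partial_\theta(u(r)\varphi) = 0$, hence $\partial_\theta\varphi = 0$.) Then $\mu = \varphi(r)\,dr\wedge d\theta$, and comparing $U = \sgrad f$ with $U = u(r)\partial_\theta$ via the defining relation $\mu(X,\sgrad f) = df(X)$ applied to $X = \partial_r$ gives $\varphi(r)\,u(r) = F'(r)$, i.e. \eqref{velocitypolar}. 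Finally $\varphi(0) = 0$ because the chart degenerates at the origin (the circles collapse to a point), and $\varphi'(0)\ne 0$ because the Hessian of $f$ is nondegenerate at $p_0$: near $p_0$, $F(r)\sim \tfrac12 c r^2$ and the level sets are genuine circles of radius $\sim\sqrt r$-order, so the Jacobian $\varphi$ vanishes to exactly first order. I expect the main obstacle to be the regularity of $\theta$ (equivalently of the period function $\mathcal{T}(r)$) as $r\to 0$, which is where the nondegeneracy of $\nabla^2 f$ at $p_0$ is essential and must be invoked carefully via the Morse lemma; the rest is bookkeeping with area-preservation.
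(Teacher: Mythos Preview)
Your proposal is correct and follows essentially the same route as the paper: level sets are circles by regularity, the angular coordinate is rescaled flow time along a transversal, $\varphi$ is $\theta$-independent by $\operatorname{div} U = 0$ (the paper writes out the divergence in coordinates, you use $\mathcal{L}_U\mu=0$, which is the same thing), and the velocity formula drops out of the skew-gradient relation. The only place the paper goes beyond your sketch is exactly where you flagged the obstacle: for the behavior at $r=0$ it passes to locally Euclidean coordinates, Taylor-expands $f$ to quadratic order, writes down the linearized flow explicitly as an elliptic rotation, and reads off both the finiteness of $\lim_{r\to 0}\mathcal{T}(r)$ and the standard polar-coordinate Jacobian behavior from that, rather than invoking the Morse lemma abstractly.
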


\begin{proof}
On $C\backslash \{p_0\}$, we have that $df$ is nonzero, so the implicit function theorem says that the level set $f^{-1}\{k\}$ is a smooth one-dimensional closed submanifold of $M$, and so it must be diffeomorphic to a circle.
Construct a curve $c\colon [0,R]\to M$ such that $c(0)=p_0$ with $c'(0)\ne 0$, and chosen so that $c$ intersects all level sets of
$f$ orthogonally; we can do this by solving the differential equation $c'(s) = \beta(s) \frac{\grad f(c(s))}{\lvert \grad f(c(s))\rvert}$ for $s>0$, for any positive function $\beta$, since $\grad f$ is nowhere zero except at $p_0$.

For each $s\in (0,R)$, the vector field $U$ is tangent to the level set $\{p\in C\, \vert \, f(p)=f(c(s))\}$, and points counterclockwise along it. It is nowhere zero since $\grad f$ is nowhere zero on $C\backslash \{p_0\}$, and its flow $\gamma$ defined by
$$ \frac{\partial \gamma}{\partial t}(t,p) = U\big(\gamma(t,p)\big)$$
satisfies $f(\gamma(t,p)) = f(p)$ for all $t\in \mathbb{R}$. For each $s\in (0,R)$ there is a smallest positive number $P(s)$ such that $\gamma(P(s),c(s)) = c(s)$, which is the period. Because $U$ is steady, $\gamma$ is a one-parameter group, which implies $\gamma(t+P(s), c(s)) = \gamma(t,c(s))$ for all $t\in \mathbb{R}$.

Now define polar coordinates $\Phi\colon (0,R)\to [0,2\pi)\to M$
by $$\Phi(r,\theta) = \gamma\big(\frac{P(r)\theta}{2\pi}, c(r)\big).$$
Then $\Phi$ maps $(0,R)\times [0,2\pi)$ bijectively onto $C\backslash \{p_0\}$, and extends $2\pi$-periodically for all $\theta\in\mathbb{R}$, while sending the origin $r=0$ to the local minimum $p_0$. We have $f(\Phi(r,\theta)) = f(c(r))$, so define $F\colon [0,R)\to \mathbb{R}$ by $F(r) = f(c(r))$. Since the flow of $U$ satisfies
$$ \gamma\big(t,\Phi(r,\theta)\big) = \gamma\Big(t, \gamma\big(\frac{P(r)\theta}{2\pi}, c(r)\big)\Big)
= \gamma\Big( t + \frac{P(r)\theta}{2\pi}, c(r)\Big) = \Phi\Big(r, P(r)\big(\theta+\frac{2\pi t}{P(r)}\big)\Big),$$
we see that the flow is given in polar coordinates by $(t,r,\theta) \mapsto (r,\theta+\tfrac{2\pi t}{P(r)})$. Thus we conclude that the velocity field $U$ must be given in coordinates by
\begin{equation}\label{Upolargeneral}
U = \frac{2\pi}{P(r)} \, \frac{\partial}{\partial \theta}.
\end{equation}

The area form in $(r,\theta)$ coordinates is given by
$$ \mu = \varphi(r,\theta) \, dr\wedge d\theta$$
for some function $\varphi$, and the divergence of $U$ in these coordinates is given by
$$ \diver{U} = \frac{1}{\varphi(r,\theta)} \, \frac{\partial}{\partial \theta} \Big( \frac{2\pi \varphi(r,\theta)}{P(r)}\Big) = \frac{2\pi P(r)}{\varphi(r,\theta)} \, \frac{\partial \varphi(r,\theta)}{\partial \theta}.$$
But since we know $U$ is divergence-free, we conclude that $\varphi$ depends only on $r$, which is \eqref{areaformpolar}.

Recall that $U$ is the skew-gradient of $f$, which is given in $(r,\theta)$ coordinates by $f(\Phi(r,\theta)) = F(r)$. The skew-gradient depends only on
the area form (not the metric) and is given by \eqref{velocitypolar} for a purely radial function with radial area form.

The local behavior at $r=0$ comes from working in different coordinates that make sense there. Since the metric is locally Euclidean and $f$ has a nondegenerate critical point at $p_0$, we can choose coordinates $(x,y)$ such that $ds^2 = dx^2 + dy^2 + O(x^2+y^2)$, rotated if necessary so that
$$f(x,y) = f(p_0) + \frac{1}{2} a x^2 + \frac{1}{2} by^2 + O((x^2+y^2)^{3/2}$$ for some $a>0$ and $b>0$. The area form in these coordinates is $\mu = dx\wedge dy + O(x^2+y^2)$, and the skew-gradient of $f$ is given by
$$ U = -\frac{\partial f}{\partial y} \, \frac{\partial}{\partial x} + \frac{\partial f}{\partial x} \, \frac{\partial}{\partial y} = -by \, \partial_x + ax \, \partial_y + O(x^2+y^2),$$
so that the flow is given near $p_0$ by
$$ \gamma(t,x,y) = \Big( x \cos{(\sqrt{ab}t)} -\sqrt{\tfrac{b}{a}} y \sin{(\sqrt{ab}t)},  \sqrt{\tfrac{a}{b}} x \sin{(\sqrt{ab}t)}
y \cos{(\sqrt{ab}t)} \Big) + O(x^2+y^2).$$

In particular the limit of the periods defined above is $\displaystyle \lim_{r\to 0} P(r) = \frac{2\pi}{\sqrt{ab}}$.
In addition on the overlap of the coordinate charts, if $(x,y) = \big(c_1(s),c_2(s)\big)$ defines the radial curve $c(s)$ in locally Euclidean coordinates, then the coordinates are related by
\begin{align*}
(x,y) &= \Phi(r,\theta) =
\gamma\Big(\frac{P(r)\theta}{2\pi}, c(r)\Big)
= \Big( c_1(r) \cos{(\sqrt{ab}P(r)\theta)/2\pi} -\sqrt{\tfrac{b}{a}} c_2(r) \sin{(\sqrt{ab}P(r)\theta/2\pi)}, \\
&\qquad\qquad  \sqrt{\tfrac{a}{b}} c_1(r) \sin{(\sqrt{ab}P(r)\theta/2\pi)}
c_2(t) \cos{(\sqrt{ab}P(r)\theta/2\pi)} \Big) + O(r^2) \\
&= \Big( rc_1'(0) \cos{\theta} - \sqrt{\tfrac{b}{a}} r c_2'(0) \sin{\theta}, \sqrt{\tfrac{a}{b}} r c_1'(0) \sin{\theta} + r c_2'(0) \cos{\theta}\Big) + O(r^2).
\end{align*}
Since by assumption $c'(0)\ne 0$, this gives a standard polar coordinate transformation near the origin, and so all the functions involved behave exactly as they would in standard polar coordinates, for example \eqref{varphilimits}.
\end{proof}

With $\omega = \Laplacian f$, the vorticity satisfies $\{f,\omega\}=0$ in the simply connected cell $C$. Since $f$ is strictly increasing from $p_0$ to $\partial C$, we can express the vorticity as a function of $f$, via $\Laplacian f = \Omega\circ f$. In our polar coordinates we can simply write $\omega = \omega(r)$. We now see how to compute this in terms of the metric components.

\begin{corollary}\label{vorticitycoords}
In the coordinates $(r,\theta)$ given by Lemma \ref{polarcoordsgeneral},
suppose the metric is given by
$$ ds^2 = g_{11}(r,\theta) \, dr^2 + 2g_{12}(r,\theta) \, dr\,d\theta +
g_{22}(r,\theta) \, d\theta^2.$$
Then the vorticity $\omega = \Laplacian f$ is given by the equation
\begin{equation}\label{vorticityGdef}
\omega(r) = \frac{1}{\varphi(r)} \, \frac{d}{dr} \big( G(r) u(r)\big),
\qquad \text{where } G(r) = \frac{1}{2\pi} \int_0^{2\pi} g_{22}(r,\theta) \, d\theta.
\end{equation}
Equivalently we may write
\begin{equation}\label{Gshortcut}
G(r) = \frac{1}{u(r)} \int_0^r \varphi(s) \omega(s)\,ds.
\end{equation}

In addition the components satisfy
\begin{equation}\label{componentsdeterminant}
g_{11}(r,\theta) g_{22}(r,\theta) - g_{12}(r,\theta)^2 = \varphi(r)^2
\end{equation}
and
\begin{equation}\label{crosscomponentderivative}
\frac{\partial g_{12}(r,\theta)}{\partial \theta} = \frac{1}{u(r)} \, \frac{\partial}{\partial r} \Big( u(r) \big[ g_{22}(r,\theta) - G(r)\big]\Big).
\end{equation}
\end{corollary}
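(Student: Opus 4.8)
The plan is to compute the Laplacian $\Laplacian f$ directly in the coordinates $(r,\theta)$ supplied by Lemma \ref{polarcoordsgeneral}, using the fact that $f = F(r)$ depends only on $r$, and then average over $\theta$. Recall the coordinate formula for the Laplace--Beltrami operator: if $(g^{ij})$ is the inverse metric and $\sqrt{g} = \varphi(r)$ is the volume density (by \eqref{areaformpolar}), then
$$\Laplacian f = \frac{1}{\varphi(r)} \partial_i \big( \varphi(r)\, g^{ij}\, \partial_j f\big).$$
Since $\partial_\theta f = 0$ and $\partial_r f = F'(r) = \varphi(r) u(r)$, only the $j = r$ terms survive, and the $\theta$-derivative in the outer sum kills the $i=\theta$ contribution after we remember that $\Laplacian f = \omega(r)$ is already known to be $\theta$-independent (from $\{f,\omega\}=0$ in the cell, hence $\omega = \Omega\circ f = \omega(r)$). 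Concretely, $g^{rr} = g_{22}/\varphi^2$, so $\varphi\, g^{rr}\, \partial_r f = (g_{22}/\varphi)\cdot \varphi u = g_{22}(r,\theta)\, u(r)$, and $\varphi\, g^{\theta r}\, \partial_r f = -(g_{12}/\varphi)\cdot\varphi u = -g_{12}(r,\theta) u(r)$. Therefore
$$\omega(r) = \Laplacian f = \frac{1}{\varphi(r)}\left[ \partial_r\big(g_{22}(r,\theta) u(r)\big) - \partial_\theta\big(g_{12}(r,\theta) u(r)\big)\right].$$
Averaging both sides over $\theta \in [0,2\pi)$, the $\partial_\theta$ term integrates to zero by periodicity, and the left side is unchanged since it is $\theta$-independent; pulling the average inside the $\partial_r$ and writing $G(r) = \frac{1}{2\pi}\int_0^{2\pi} g_{22}(r,\theta)\,d\theta$ gives exactly \eqref{vorticityGdef}. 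The equivalent formula \eqref{Gshortcut} then follows by integrating $\frac{d}{dr}(G u) = \varphi\omega$ from $0$ to $r$ and using that $G(0)u(0)$ is finite (indeed $G = O(r^2)$, $u = O(1)$ near the origin by the local analysis in Lemma \ref{polarcoordsgeneral}, so the boundary term vanishes), then dividing by $u(r)$.

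For \eqref{componentsdeterminant}, the area form induced by the metric is $\sqrt{\det(g_{ij})}\, dr\wedge d\theta = \sqrt{g_{11}g_{22} - g_{12}^2}\, dr\wedge d\theta$, and this must equal $\mu = \varphi(r)\, dr\wedge d\theta$ by \eqref{areaformpolar}; squaring gives the claim. Finally \eqref{crosscomponentderivative} is obtained by returning to the un-averaged identity
$$\omega(r)\varphi(r) = \partial_r\big(g_{22}(r,\theta) u(r)\big) - \partial_\theta\big(g_{12}(r,\theta) u(r)\big)$$
derived above, and subtracting from it the $\theta$-averaged identity $\omega(r)\varphi(r) = \partial_r\big(G(r) u(r)\big)$ (which holds by \eqref{vorticityGdef}, with $u$ constant in $\theta$ so $\partial_\theta(G u) = 0$). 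The $\omega\varphi$ terms cancel, leaving
$$\partial_\theta\big(u(r) g_{12}(r,\theta)\big) = \partial_r\Big(u(r)\big[g_{22}(r,\theta) - G(r)\big]\Big),$$
and since $u$ factors out of the $\partial_\theta$ on the left (it depends only on $r$) this is precisely \eqref{crosscomponentderivative}.

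The only genuinely delicate point is the justification that $\omega = \Laplacian f$ may be treated as a function of $r$ alone before doing any averaging — i.e. that one is allowed to drop, rather than merely average away, the $\partial_\theta g_{12}$ term when first identifying $\omega$. This is not automatic from the coordinate Laplacian formula; it uses the steady Euler equation $\{f,\Laplacian f\} = 0$, which in the cell $C$ (where $f$ is a Morse function with a single interior critical point) forces $\Laplacian f$ to be constant on each level set of $f$, hence a function of $r$. I would state this reduction explicitly at the start. Everything else is a routine unwinding of the standard Laplace--Beltrami formula in general (non-orthogonal) coordinates together with the normalization $\sqrt{\det g} = \varphi$, so no further obstacle is expected.
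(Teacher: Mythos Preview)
Your argument is correct and follows essentially the same route as the paper: derive the pointwise identity $\omega(r)\varphi(r) = \partial_r\big(u(r)g_{22}\big) - u(r)\partial_\theta g_{12}$, average in $\theta$ to get \eqref{vorticityGdef}, and subtract the averaged from the unaveraged equation to get \eqref{crosscomponentderivative}. The only cosmetic difference is that the paper obtains the pointwise identity by computing $dU^\flat$ from $U^\flat = u\,g_{12}\,dr + u\,g_{22}\,d\theta$ and using $dU^\flat = \omega\,\mu$, whereas you compute $\Laplacian f$ directly via the coordinate Laplace--Beltrami formula; since $U=\sgrad f$ and $dU^\flat = (\Laplacian f)\,\mu$, these are the same calculation in different clothing.
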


\begin{proof}
Recall that the velocity field is given by $U=u(r) \, \frac{\partial}{\partial \theta}$. The vorticity function $\omega$ is given by
$ dU^{\flat} = \omega \, \mu,$
and so the first thing we do is compute the lift of $U$ to a $1$-form:
$$ U^{\flat} = u(r) g_{12}(r,\theta) \, dr + u(r) g_{22}(r,\theta) \, d\theta.$$
Then
\begin{equation}\label{generalvorticity}
dU^{\flat} = \left( \frac{\partial}{\partial r} \big( u(r)g_{22}(r,\theta)\big) - \frac{\partial}{\partial \theta}\big( u(r)g_{12}(r,\theta)\big)\right) \, dr\wedge d\theta.
\end{equation}
We conclude that
\begin{equation}\label{fullvorticity}
\frac{\partial}{\partial r} \big( u(r)g_{22}(r,\theta)\big) - u(r) \, \frac{\partial g_{12}(r,\theta)}{\partial \theta} = \omega(r) \varphi(r).
\end{equation}
Now the metric components $g_{12}$ and $g_{22}$ are $2\pi$-periodic functions of $\theta$, and we may integrate around this formula around the circle to obtain \eqref{vorticityGdef}.

Note that subtracting the averaged formula \eqref{vorticityGdef} from the full vorticity formula \eqref{fullvorticity} gives the formula \eqref{crosscomponentderivative}. Finally the formula \eqref{componentsdeterminant} follows from the usual formula for the Riemannian area form in terms of the metric components, and the formula \eqref{areaformpolar} that we already know. The formula \eqref{Gshortcut} comes from solving \eqref{vorticityGdef} for $G$ and using the fact that $G(0)=0$.
\end{proof}

\begin{remark}\label{constantspeedremark}
Observe that
$ \lvert U\rvert^2 = u(r)^2 g_{22}(r,\theta)$.
Thus $\lvert U\rvert^2$ is constant along trajectories if and only if $g_{22}(r,\theta)$ is independent of $\theta$. If that is the case, then $g_{22}(r,\theta) = G(r)$ as in \eqref{vorticityGdef}, and this means that $g_{12}(r,\theta)$ is constant. It is therefore determined by its values when $\theta=0$, i.e., along the curve $c$. If we choose $c$ to be parallel to the gradient of $f$ (i.e., perpendicular to $U$), then $g_{12}(r,\theta)\equiv 0$, and the metric becomes diagonal. Furthermore since $g_{22}(r,\theta) = G(r)$ and $g_{11}(r,\theta) G(r) = \varphi(r)$, we also conclude that $g_{11}(r,\theta) = E(r)$, so the metric is $ds^2 = E(r) \,dr^2 + G(r) \, d\theta^2$. We may rescale $r$ so that $E\equiv 1$ (corresponding to choosing $c$ to be unit speed), which means $G(r) = \varphi(r)^2$, and obtain the standard polar coordinate metric
$$ ds^2 = dr^2 + \varphi(r)^2 \, d\theta^2.$$
We have thus shown the standard result that $\lvert U\rvert$ is constant on trajectories if and only if $U$ is rotationally symmetric on a rotationally symmetric manifold.

If $\lvert U\rvert$ is not constant on trajectories, then there is no particular reason to demand that $c$ be orthogonal to level sets or that it be unit speed, since those properties will not be preserved as $c$ is rotated around the cell $C$ by the flow.
\end{remark}

Now we consider how much flexibility there is to construct steady fluid flows with given functions $F(r)$, $\varphi(r)$, $E(r)$, and $G(r)$. (Note that these functions also determine $u(r)$ and $\omega(r)$ by formulas \eqref{velocitypolar} and \eqref{vorticityGdef}.

\begin{lemma}\label{polarcoordsconverse}
Suppose $F(r)$, $\varphi(r)/r$, and $G(r)$ are given smooth functions of $r\in [0,R)$ that extend to smooth even functions on $(-R,R)$. (In particular $\varphi(r)$ is a smooth odd function of $r$.) Assume that $F'(r)$, $\varphi(r)$, and $G(r)$ are all positive for $r\in (0,R)$, with $G(0)=0$. Suppose $\greekfcn(r,\theta)$ is a smooth function on the disc $r\in [0,R]$, $\theta\in S^1$, i.e., it can be expressed as
$$\greekfcn(r,\theta) = \sum_{n=0}^{\infty} r^n a_n(r) \cos{n\theta} + \sum_{n=1}^{\infty} r^n b_n(r) \sin{n\theta},$$
where each $a_n$ and $b_n$ extends to a smooth even function of $r$.

Then there is a topological disc $M$ with Riemannian metric
$$ ds^2 = g_{11}(r,\theta) \, dr^2 + 2g_{12}(r,\theta) \, dr\,d\theta + g_{22}(r,\theta) \, d\theta^2,$$
where
\begin{equation}\label{generalmetriccomponents}
\begin{split}
g_{11}(r,\theta) &= \frac{\varphi(r)^2}{G(r)} \, \frac{1 + \frac{1}{F'(r)^2} \, \big(\frac{\partial \greekfcn}{\partial r}(r,\theta)^2\big)}{1 + \frac{\varphi(r)}{F'(r) G(r)} \, \frac{\partial \greekfcn}{\partial \theta}(r,\theta)} \\
g_{12}(r,\theta) &= \frac{\varphi(r)}{F'(r)} \, \frac{\partial \greekfcn}{\partial r}(r,\theta) \\
g_{22}(r,\theta) &= G(r) +  \frac{\varphi(r)}{F'(r)} \, \frac{\partial \greekfcn}{\partial \theta}(r,\theta),
\end{split}
\end{equation}
such that the area form is $dA = \varphi(r) \, dr \wedge d\theta$,
and such that the velocity field $U = \frac{F'(r)}{\varphi(r)} \, \partial_{\theta}$ is a steady solution of the 2D Euler equation on $M$
with vorticity given by \eqref{vorticityGdef}.

Conversely all discs with steady flows with stream function strictly increasing from a single minimum must arise in this way.
\end{lemma}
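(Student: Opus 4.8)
The plan is to verify the forward direction by a direct check that the metric \eqref{generalmetriccomponents} is well-defined, positive-definite, and produces exactly the data claimed, and then to obtain the converse by observing that the construction is reversible given the polar coordinates of Lemma \ref{polarcoordsgeneral}. First I would treat the forward direction. Starting from the stated $g_{ij}$, the key algebraic identity to confirm is the determinant relation $g_{11}g_{22} - g_{12}^2 = \varphi(r)^2$: plugging in the three expressions, the numerator of $g_{11}$ was precisely engineered so that $g_{11}g_{22} = \varphi^2 + (\text{stuff})$ cancels the $g_{12}^2$ term — this is a routine computation. Given that identity, the area form is automatically $dA = \varphi(r)\, dr\wedge d\theta$, and since $\varphi$ depends only on $r$, the field $U = \frac{F'(r)}{\varphi(r)}\partial_\theta = \sgrad F(r)$ is divergence-free. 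Because the stream function $F$ depends only on $r$, we have $\{F, \Delta F\} = 0$ trivially (both are functions of $r$ alone, hence their skew-gradients are parallel), so $U$ is a steady Euler solution. The vorticity formula \eqref{vorticityGdef} then follows from Corollary \ref{vorticitycoords} once we check that $\frac{1}{2\pi}\int_0^{2\pi} g_{22}(r,\theta)\, d\theta = G(r)$, which is immediate since $\partial_\theta \greekfcn$ integrates to zero over the circle.

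Next I would address positivity and smoothness, which is where the smallness hypothesis on $\greekfcn$ is needed. Positive-definiteness requires $g_{22} > 0$ and $g_{11}g_{22} - g_{12}^2 > 0$; the latter equals $\varphi^2 > 0$ for $r > 0$, so we only need $g_{22}(r,\theta) = G(r) + \frac{\varphi(r)}{F'(r)}\partial_\theta\greekfcn(r,\theta) > 0$ and, for $g_{11}$ to be positive (equivalently, for its denominator to be positive), we need $1 + \frac{\varphi}{F'G}\partial_\theta\greekfcn > 0$ — both of which hold provided $\greekfcn$ is $C^1$-small relative to the fixed positive functions $G, F', \varphi$ on $[0,R]$. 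For smoothness at the origin, I would invoke the structure imposed on $\greekfcn$: writing it as a Fourier series in $\theta$ with coefficients vanishing to order $r^n$ means that in the underlying Cartesian-type coordinates near $r=0$ (as in the end of the proof of Lemma \ref{polarcoordsgeneral}), $\greekfcn$ and its relevant derivatives extend smoothly; combined with $\varphi(r)/r$, $F'(r)$, $G(r)/r^2$ being smooth even functions, one checks $g_{11} = O(1)$, $g_{12} = O(r)$, $g_{22} = O(r^2)$ with the right parity, so the metric extends smoothly across $p_0$. This bookkeeping is the fussiest part but is standard polar-coordinate regularity analysis.

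For the converse, suppose we are given any topological disc $M$ carrying a steady flow with stream function strictly increasing from a single nondegenerate minimum and $f = 0$ on the boundary. Lemma \ref{polarcoordsgeneral} produces polar coordinates $(r,\theta)$ in which $f = F(r)$, $\mu = \varphi(r)\, dr\wedge d\theta$, $U = u(r)\partial_\theta$ with $u = F'/\varphi$; Corollary \ref{vorticitycoords} gives $G(r) = \frac{1}{2\pi}\int g_{22}\, d\theta$ and the identities \eqref{componentsdeterminant}, \eqref{crosscomponentderivative}. I would then \emph{define} $\greekfcn$ by integrating \eqref{crosscomponentderivative}: since $g_{22}(r,\theta) - G(r)$ has zero $\theta$-average, set $\greekfcn(r,\theta) = \frac{F'(r)}{\varphi(r)}\int_0^\theta \big(g_{22}(r,\theta') - G(r)\big)\, d\theta'$, which is genuinely $2\pi$-periodic in $\theta$. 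One checks directly that this $\greekfcn$ reproduces $g_{22}$ via the third line of \eqref{generalmetriccomponents}, that $g_{12}$ matches the second line — this is exactly the content of \eqref{crosscomponentderivative} after unwinding the definition — and that $g_{11}$ matches the first line by using the determinant constraint \eqref{componentsdeterminant}. The smoothness of $\greekfcn$ (including the claimed Fourier expansion with $r^n$ factors) follows by running the regularity analysis of Lemma \ref{polarcoordsgeneral} in reverse.

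The main obstacle I anticipate is not any single computation but the regularity/parity bookkeeping at the origin: verifying that the formulas \eqref{generalmetriccomponents} genuinely define a smooth metric on a disc (not merely on the punctured disc), and conversely that the $\greekfcn$ extracted in the converse has the stated smooth Fourier structure. Everything else — the determinant identity, steadiness, the vorticity formula, reproducing the $g_{ij}$ — reduces to short algebraic manipulations using the identities already established in Corollary \ref{vorticitycoords}.
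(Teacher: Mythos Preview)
Your approach is essentially the same as the paper's, but there is one genuine circular step in the forward direction. You write that ``$\{F,\Delta F\}=0$ trivially (both are functions of $r$ alone)'', but that $\Delta F$ depends only on $r$ is precisely what needs to be proved here: since $F$ is radial, $\{F,\Delta F\}=\tfrac{F'(r)}{\varphi(r)}\,\partial_\theta(\Delta F)$, so steadiness is \emph{equivalent} to $\Delta F$ being radial, not a consequence of it. The paper handles this by computing $\curl U$ directly from the given $g_{ij}$: with $u=F'/\varphi$, one has $u\,g_{22}=uG+\partial_\theta\greekfcn$ and $u\,g_{12}=\partial_r\greekfcn$, so in the general formula $\curl U=\tfrac{1}{\varphi}\big(\partial_r(u\,g_{22})-\partial_\theta(u\,g_{12})\big)$ the mixed partials of $\greekfcn$ cancel, leaving $\tfrac{1}{\varphi}\tfrac{d}{dr}(uG)$, which is visibly radial. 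Once you insert this one-line computation your argument is complete.

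For the converse, your explicit formula $\greekfcn(r,\theta)=u(r)\int_0^\theta\big(g_{22}(r,\theta')-G(r)\big)\,d\theta'$ gives $\partial_r\greekfcn=u(r)\big(g_{12}(r,\theta)-g_{12}(r,0)\big)$ after using \eqref{crosscomponentderivative}, so it reproduces $g_{12}$ only if $g_{12}(r,0)=0$. This is easily arranged (choose the radial curve $c$ orthogonal to level sets of $f$), or else add a suitable function of $r$ alone to your $\greekfcn$. The paper avoids this nuisance by invoking the Poincar\'e Lemma abstractly rather than writing down an explicit primitive: \eqref{crosscomponentderivative} says exactly that the $1$-form $u\,g_{12}\,dr+u\,(g_{22}-G)\,d\theta$ is closed on the simply connected disc, hence equals $d\greekfcn$ for some $\greekfcn$.
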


\begin{proof}
It is clear that
\begin{equation}\label{metricdeterminant}
g_{11}g_{22}-g_{12}^2 = \varphi(r)^2\end{equation}
by the definitions.
Thus the area form is $dA = \varphi(r) \, dr \wedge d\theta$, and the divergence of $U = u(r) \,\partial_{\theta}$ is indeed zero. In particular if $U$ is the skew-gradient of $F$, with $u(r) = F'(r)/\varphi(r)$.

It remains to show that the curl of $U$ is a function of $r$ alone. We have already computed in \eqref{generalvorticity} that the curl of $U=u(r)\,\partial_{\theta}$ in a general metric is a function given by
$$\curl{U} = \frac{1}{\varphi(r)} \left( \frac{\partial}{\partial r} \big( u(r)g_{22}(r,\theta)\big) - \frac{\partial}{\partial \theta}\big( u(r)g_{12}(r,\theta)\big)\right),$$
which in this case reduces to
\begin{align*}
\curl{U} &= \frac{1}{\varphi(r)} \left( \frac{\partial}{\partial r} \Big( \frac{F'(r)G(r)}{\varphi(r)} + \frac{\partial \greekfcn}{\partial \theta} \Big) - \frac{\partial}{\partial \theta}\Big( \frac{\partial \greekfcn}{\partial r}\Big)\right) \\
&= \frac{1}{\varphi(r)} \, \frac{d}{dr} \left( \frac{F'(r)G(r)}{\varphi(r)}\right).
\end{align*}
We conclude that $U\cdot \nabla \curl{U}=0$, so indeed $U$ is a steady solution of the Euler equation.

The conditions at $r=0$ are required in order to get smoothness of the metric, since $\varphi(r) \, dr$ must be a smooth $1$-form on $M$, and $g_{22}(0,\theta)$ must be zero. We then get smooth metric coefficients since $u(r) = F'(r)/\varphi(r)$ is a smooth, nowhere-zero function on $[0,R)$.

The converse simply comes from \eqref{fullvorticity},
after writing the metric as $g_{22}(r,\theta) = G(r) + \psi(r,\theta)$, where $\psi(r,\theta)$ has mean zero for each fixed $r$. Then we have that
$$\frac{\partial}{\partial r} \big( u(r)\psi(r,\theta)\big) = u(r) \, \frac{\partial g_{12}(r,\theta)}{\partial \theta}.$$
We conclude from the Poincar\'e Lemma since the disc is simply connected that there must be a smooth function $\greekfcn$ such that $u(r)\psi(r,\theta) = \frac{\partial}{\partial \theta}\greekfcn(r,\theta)$ and $u(r) g_{12}(r,\theta) = \frac{\partial}{\partial r}\greekfcn(r,\theta)$. Finally the formula for $g_{11}$ comes from solving the equation \eqref{componentsdeterminant} for $g_{11}$ in terms of $\varphi$ and the other components.
\end{proof}

\section{Proofs of Theorem \ref{mainindexthm} and Corollary \ref{indexthmQversion}}\label{mainproofsection}

We consider a variation field supported on the cell $C$ as in the previous section, of the form $$Y(t,r,\theta) = \sin{(\tfrac{t}{T})} \sgrad \big( \xi(r) \cos{(\theta-\alpha t)}\big),$$
on the time interval $[0,\pi T]$.
Performing the time integral first in the index formula \eqref{indexformhardestsgrad} turns out to make things much easier due to the following Lemma.

\begin{lemma}\label{timeintegrallemma}
Suppose $\alpha\in \mathbb{R}$ is nonzero.
For any $\theta\in \mathbb{R}$ and any vectors $p, q, r, s\in \mathbb{R}^n$ we have the following limits:
\begin{align}
\lim_{T\to\infty} \frac{4}{\pi T} \int_0^{\pi T}
\sin^2{\big(\frac{t}{T}\big)} \, dt &= 2 \label{sin2limit} \\
\lim_{T\to\infty} \frac{4}{\pi T} \int_0^{\pi T}
\sin^2{\big(\frac{t}{T}\big)} \cos^2{(\theta-\alpha t)} \, dt &= 1
\label{sin2cos2limit} \\
\lim_{T\to\infty} \frac{4}{\pi T} \int_0^{\pi T}
\Big( \sin{\big(\frac{t}{T}\big)} \sin{(\theta-\alpha t)} \, p +
\frac{1}{T} \cos{\big(\frac{t}{T}\big)} \cos{(\theta-\alpha t)}\, q\Big)^2 \, dt &=  \lvert p\rvert^2, \label{psinqcoslimit} \\
\lim_{T\to\infty} \frac{4}{\pi T} \int_0^{\pi T}
\Big( \sin{\big(\frac{t}{T}\big)} \cos{(\theta-\alpha t)} \, p -
\frac{1}{T} \cos{\big(\frac{t}{T}\big) \sin{(\theta-\alpha t)}\, q}\Big)^2 \, dt &= \lvert p\rvert^2, \label{pcosqsinlimit}
\end{align}
\begin{equation}\label{dotproductlimit}
\begin{split}
&\lim_{T\to\infty} \frac{4}{\pi T} \int_0^T
\Big\langle  \sin{\big(\frac{t}{T}\big)} \sin{(\theta-\alpha t)} p +
 \frac{1}{T} \cos{\big(\frac{t}{T}\big)} \cos{(\theta-\alpha t)} q, \\
 &\qquad\qquad\qquad
 \sin{\big(\frac{t}{T}\big)} \cos{(\theta-\alpha t)} r  - \frac{1}{T} \, \cos{\big(\tfrac{t}{T}\big)} \sin{(\theta-\alpha t)} s \Big\rangle \, dt =0.
\end{split}
\end{equation}
\end{lemma}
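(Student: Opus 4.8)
The plan is to reduce all six statements to two elementary facts about oscillatory integrals and then expand each integrand with product-to-sum identities. The first fact (call it \emph{slow cancellation}) is that $\int_0^{\pi T}\cos(\tfrac{2kt}{T})\,dt=\int_0^{\pi T}\sin(\tfrac{2kt}{T})\,dt=0$ for every nonzero integer $k$, since $[0,\pi T]$ is exactly one period of $t\mapsto\cos(\tfrac{2kt}{T})$. The second fact (\emph{fast cancellation}) is that $\bigl|\int_0^{S}\cos(\beta t+\phi)\,dt\bigr|\le 2/\lvert\beta\rvert$ for all $S\ge 0$ and all $\phi$, so that if $\beta_T$ is a family of reals with $\liminf_{T\to\infty}\lvert\beta_T\rvert>0$, then $\tfrac1T\int_0^{cT}\cos(\beta_T t+\phi_T)\,dt\to 0$ for any fixed $c>0$. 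The hypothesis $\alpha\ne 0$ enters precisely here: the frequencies occurring in the fast factors will be $2\alpha$ and $2\alpha\pm\tfrac2T$, and the latter converge to $2\alpha\ne 0$, hence are bounded away from zero once $T$ is large.

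First I would dispatch \eqref{sin2limit} by writing $\sin^2(\tfrac tT)=\tfrac12-\tfrac12\cos(\tfrac{2t}{T})$ and applying slow cancellation; this is in fact an exact identity, $\tfrac{4}{\pi T}\int_0^{\pi T}\sin^2(\tfrac tT)\,dt=2$. For \eqref{sin2cos2limit} I would expand $\cos^2(\theta-\alpha t)=\tfrac12+\tfrac12\cos(2\theta-2\alpha t)$; the $\tfrac12$ contributes $1$ by the previous line, while the remainder is $\tfrac14\cos(2\theta-2\alpha t)-\tfrac14\cos(\tfrac{2t}{T})\cos(2\theta-2\alpha t)$, whose second summand is turned by a product-to-sum identity into frequencies $2\alpha\pm\tfrac2T$; both summands then vanish after dividing by $T$ and letting $T\to\infty$ by fast cancellation. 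The same computation gives $\tfrac{4}{\pi T}\int_0^{\pi T}\sin^2(\tfrac tT)\sin^2(\theta-\alpha t)\,dt\to 1$, which will be reused.

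For \eqref{psinqcoslimit} and \eqref{pcosqsinlimit} I would expand the square into three terms: the cross term carries a factor $\tfrac1T$ times a bounded function, so $\tfrac{4}{\pi T}\int_0^{\pi T}$ of it is $O(1/T)$; the last term carries $\tfrac1{T^2}$ times a bounded function, hence $O(1/T^2)$; and the leading term is $\lvert p\rvert^2$ times the average of $\sin^2(\tfrac tT)\sin^2(\theta-\alpha t)$, respectively $\sin^2(\tfrac tT)\cos^2(\theta-\alpha t)$, which is $\lvert p\rvert^2$ by the two computations just mentioned. Finally \eqref{dotproductlimit} is handled the same way after expanding the inner product into four terms: three carry an explicit $\tfrac1T$ or $\tfrac1{T^2}$ with bounded integrands and so vanish after the $\tfrac1T$ scaling, while the leading term is $\langle p,r\rangle$ times $\tfrac{4}{\pi T}\int_0^{T}\tfrac12\sin^2(\tfrac tT)\sin(2\theta-2\alpha t)\,dt$; here one notes that $\sin(2\theta-2\alpha t)$ has \emph{no} constant Fourier mode in $t$, so writing $\sin^2(\tfrac tT)=\tfrac12-\tfrac12\cos(\tfrac{2t}{T})$ and using product-to-sum produces only frequencies $2\alpha$ and $2\alpha\pm\tfrac2T$, all killed by fast cancellation. (This is why the change of upper limit from $\pi T$ to $T$ in \eqref{dotproductlimit} is harmless: slow cancellation is not needed for this term.)

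The only thing requiring care is the bookkeeping: tracking which terms are genuinely $O(1/T)$ or $O(1/T^2)$ before integration, checking that each surviving oscillatory frequency is either an integer multiple of $\tfrac2T$ on an interval that is a whole number of its periods (so slow cancellation applies exactly) or else stays uniformly bounded away from $0$ (so fast cancellation applies), and confirming that no ``constant $\times$ constant'' mode is ever produced except in \eqref{sin2limit}, \eqref{sin2cos2limit}, \eqref{psinqcoslimit}, \eqref{pcosqsinlimit}, where it yields exactly the claimed value. I do not expect any genuine obstacle beyond this.
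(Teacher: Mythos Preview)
Your proposal is correct and follows essentially the same route as the paper: both reduce \eqref{sin2limit} to an exact identity, handle \eqref{sin2cos2limit} by expanding $\cos^2$ and noting that the oscillatory remainder divided by $T$ tends to zero (the paper writes out the explicit antiderivative rather than invoking a general bound, but the content is the same), and then observe for \eqref{psinqcoslimit}--\eqref{dotproductlimit} that the cross and quadratic-in-$q$ terms carry explicit $1/T$ or $1/T^2$ factors and so disappear in the limit, leaving only the $\lvert p\rvert^2$ (or $\langle p,r\rangle$) term already covered. Your observation that the upper limit $T$ in \eqref{dotproductlimit} is harmless because only fast cancellation is needed there is apt.
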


\begin{proof}
These are easy to evaluate from explicit formulas. For \eqref{sin2limit} we have
$$ \frac{4}{\pi T} \int_0^{\pi T} \sin^2{\big(\frac{t}{T}\big)} \, dt = 2$$
even without taking the limit.
For \eqref{sin2cos2limit} with $\alpha\ne 0$, we get
$$\frac{4}{\pi T} \int_0^{\pi T}
\sin^2{\big(\frac{t}{T}\big)} \cos^2{(\theta-\alpha t)} \, dt
= 1 - \frac{\sin{(\pi T\alpha)} \cos{(2\theta-\pi T\alpha)}}{\pi T \alpha (T^2\alpha^2-1)},
$$
and the limit of this is obviously $1$ as $T\to\infty$.


For the next two limits, note that the terms involving $\langle p, q\rangle$ and $\lvert q\rvert^2$ have terms $\frac{1}{T}$ and $\frac{1}{T^2}$ respectively multiplying them, so they are of obviously smaller order and disappear in the limit; thus \eqref{psinqcoslimit} follows from \eqref{sin2cos2limit}, while \eqref{pcosqsinlimit} follows since it's the same as \eqref{psinqcoslimit} with $\theta$ shifted by $\pi/2$.

For the last limit, by the same reasoning (ignoring the terms that have $\frac{1}{T}$), the limit is equal to
\begin{align*}
\langle p, r\rangle \lim_{T\to\infty} \frac{4}{\pi T} \int_0^{\pi T} \sin^2{\big(\frac{t}{T}\big)} \sin{(\theta-\alpha t)} \cos{(\theta-\alpha t)} \, dt
&= -\langle p,r\rangle \lim_{T\to\infty} \frac{\sin{(\pi T\alpha)} \sin{(2\theta-\pi T\alpha)}}{8\alpha(\alpha^2T^2-1)} = 0.
\end{align*}
\end{proof}

Note that if $\alpha=0$, instead of \eqref{sin2cos2limit} we get
$$ \frac{4}{\pi T} \int_0^{\pi T}
\sin^2{\big(\frac{t}{T}\big)} \cos^2{\theta} \, dt = 2\cos^2{\theta} = 1 + \cos{2\theta},$$
so it really does matter that $\alpha\ne 0$ in the Lemma above. If $\alpha$ is close to zero, we may need to choose $T$ very large in order to actually get close to this limit.
Similarly if $\alpha=0$ then \eqref{dotproductlimit} becomes $\sin{2\theta}\langle p,r\rangle$ rather than zero. We will eventually be integrating with respect to $\theta$, so the resulting terms probably integrate to zero anyway, but it is less obvious.

\begin{proof}[Proof of Theorem \ref{mainindexthm}]
For $T>0$ and $\alpha\in \mathbb{R}$, with $\xi\colon [0,R]\to \mathbb{R}$ satisfying the conditions $\xi(0)=0$, $\xi(R)=0$, and $\xi'(0)\ne 0$, let
\begin{equation}\label{gformula}
g(t,r,\theta) = \sin{(\tfrac{t}{T})} \xi(r) \cos{(\theta-\alpha t)}.
\end{equation}
Note that the conditions on $\xi$ at $r=0$ are necessary and sufficient to make $g$ continuous and differentiable at $p_0$, since near $p_0$ we will have $$g(t,r,\theta) \approx \sin{(\tfrac{t}{T})}\xi'(0) r \cos{(\theta-\alpha t)} \approx \sin{(\tfrac{t}{T})} \xi'(0) (x \cos{\alpha t} + y \sin{\alpha t})$$
in terms of the locally Euclidean coordinates discussed in Lemma \ref{polarcoordsgeneral}.

We compute that $h$ defined by \eqref{streamfunctionYZ} is given by
\begin{equation}\label{hformula}
\begin{split}
h(t,r,\theta) &= \frac{\partial g}{\partial t}(t,r,\theta) + u(r) \, \frac{\partial g}{\partial \theta}(t,r,\theta) \\
&= \frac{1}{T} \, \cot{(\tfrac{t}{T})} g(t,r,\theta) +
\big(\alpha - u(r)\big) \sin{(\tfrac{t}{T})} \xi(r) \sin{(\theta-\alpha t)}.
\end{split}
\end{equation}

In particular we get
\begin{equation}\label{ghbracket}
\begin{split}
\{g,h\} &= \frac{1}{\varphi} \, \left( \frac{\partial g}{\partial r} \, \frac{\partial h}{\partial \theta} - \frac{\partial g}{\partial \theta}\,\frac{\partial h}{\partial r}\right) \\
&= \frac{\sin^2{(\tfrac{t}{T})} \xi(r)}{\varphi(r)} \left( \xi(r) u'(r) \cos^2{(\theta-\alpha t)} + \frac{d}{dr} \big( (\alpha - u(r))\xi(r)\big) \right).
\end{split}
\end{equation}

We also compute from \eqref{hformula} that
\begin{multline}\label{gradh}
\grad h =
\left( \frac{1}{T} \, \cos{\big(\frac{t}{T}\big)} \cos{(\theta-\alpha t)} \xi'(r) + \frac{d}{dr}\Big(\big(\alpha - u(r)\big)\xi(r)\Big) \sin{\big(\frac{t}{T}\big)} \sin{(\theta-\alpha t)} \right) \grad r \\
+ \left( \big(\alpha - u(r)\big) \sin{\big(\frac{t}{T}\big)} \xi(r) \cos{(\theta-\alpha t)}  - \frac{1}{T} \, \cos{\big(\tfrac{t}{T}\big)} \xi(r) \sin{(\theta-\alpha t)} \right) \grad\theta.
\end{multline}

Referring to \eqref{indexformhardestsgrad}, we compute (without integrating in space) using formulas \eqref{psinqcoslimit} and \eqref{pcosqsinlimit} from Lemma \ref{timeintegrallemma} that
\begin{equation}\label{limitgradhsquared}
\begin{split}
&\lim_{T\to\infty} \frac{4}{\pi T} \int_0^T \lvert \grad h(t,r,\theta)\rvert^2 + \omega(r) \{g(t,r,\theta),h(t,r,\theta)\} \, dt \\
&\qquad\qquad =
\left\lvert \frac{d}{dr}\Big(\big(\alpha - u(r)\big)\xi(r)\Big) \right\rvert^2  \lvert \grad r\rvert^2 +
\left\lvert \big(\alpha - u(r)\big)  \xi(r)\right\rvert^2 \, \lvert \grad\theta\rvert^2.
\end{split}
\end{equation}
The term that would involve $\langle \grad r,\grad \theta\rangle$ has zero limit by formula \eqref{dotproductlimit}.

Similarly using \eqref{sin2limit} and \eqref{sin2cos2limit}, we see from equation \eqref{ghbracket} that
\begin{equation}\label{limitpoisson}
\begin{split}
\lim_{T\to\infty} \frac{4}{\pi T} \omega(r) \{g(t,r,\theta),h(t,r,\theta)\}
&=  \frac{\xi(r)}{\varphi(r)} \left( \xi(r) u'(r) + 2 \frac{d}{dr} \big( (\alpha - u(r))\xi(r)\big) \right) \\
&= \frac{1}{\varphi(r)} \frac{d}{dr} \Big( \big(\alpha-u(r)\big) \xi(r)^2\Big).
\end{split}
\end{equation}

Combining \eqref{limitgradhsquared} and \eqref{limitpoisson}, we see that the index form from \eqref{indexformhardestsgrad} will satisfy
\begin{equation}\label{indexlimit1}
 \lim_{T\to\infty} \frac{4}{\pi T} I(Y,Y) =
\int_M \left\lvert \frac{d}{dr}\Big(\big(\alpha - u(r)\big)\xi(r)\Big) \right\rvert^2  \lvert \grad r\rvert^2 +
\left\lvert \big(\alpha - u(r)\big)  \xi(r)\right\rvert^2 \, \lvert \grad\theta\rvert^2
+ \frac{1}{\varphi(r)} \frac{d}{dr} \Big( \big(\alpha-u(r)\big) \xi(r)^2\Big) \, d\mu.
\end{equation}

Observing that the gradients of the coordinate functions in a general Riemannian metric are given by
$$ \grad r = \frac{g_{22} \partial_r - g_{12} \partial_{\theta}}{g_{11}g_{22}-g_{12}^2} \qquad \text{and}\qquad \grad \theta = \frac{-g_{12} \partial_r + g_{22} \partial_{\theta}}{g_{11}g_{22}-g_{12}^2},$$
formula \eqref{componentsdeterminant} shows that
$$ \lvert \grad r\rvert^2 = \frac{g_{22}(r,\theta)}{\varphi(r)^2} \qquad \text{and}\qquad \lvert \grad \theta\rvert^2 = \frac{g_{11}(r,\theta)}{\varphi(r)^2}.$$
Inserting these formulas into \eqref{indexlimit1}, and using the formula \eqref{areaformpolar} for the area element $d\mu$ gives
\begin{multline*}  \lim_{T\to\infty} \frac{4}{\pi T} I(Y,Y) =
\int_0^R \int_0^{2\pi} \frac{1}{\varphi(r)} \left\lvert \frac{d}{dr}\Big(\big(\alpha - u(r)\big)\xi(r)\Big) \right\rvert^2 g_{22}(r,\theta) +
\frac{1}{\varphi(r)} \left\lvert \big(\alpha - u(r)\big)  \xi(r)\right\rvert^2 \, g_{11}(r,\theta) \\
+ \omega(r) \frac{d}{dr} \Big( \big(\alpha-u(r)\big) \xi(r)^2\Big) \, d\theta \, dr.
\end{multline*}

The only terms depending on $\theta$ are $g_{11}(r,\theta)$ and $g_{22}(r,\theta)$, and performing the $\theta$ integration and using
the definitions \eqref{EGdef} of $E(r)$ and $G(r)$, we obtain
$$
\lim_{T\to\infty} \frac{4I(Y,Y)}{\pi T} =
2\pi \int_0^R \frac{G(r)}{\varphi(r)} \left\lvert \frac{d}{dr}\Big(\big(\alpha - u(r)\big)\xi(r)\Big) \right\rvert^2 +
\frac{E(r)}{\varphi(r)} \left\lvert \big(\alpha - u(r)\big)  \xi(r)\right\rvert^2  + \omega(r) \frac{d}{dr} \Big( \big(\alpha-u(r)\big) \xi(r)^2\Big) \, dr.$$
Finally we integrate the last term by parts in $r$ and use the fact that $\xi(0)=\xi(R)=0$ to obtain
$$
\lim_{T\to\infty} \frac{2I(Y,Y)}{\pi^2T} =
\int_0^R \frac{G(r)}{\varphi(r)} \left\lvert \frac{d}{dr}\Big(\big(\alpha - u(r)\big)\xi(r)\Big) \right\rvert^2  +
\frac{E(r)}{\varphi(r)} \left\lvert \big(\alpha - u(r)\big)  \xi(r)\right\rvert^2
- \omega'(r)  \big(\alpha-u(r)\big) \xi(r)^2  \, dr.$$
This limit is the quantity $I$ appearing in \eqref{alphacondition}; if it is  negative for some $\alpha\ne 0$, then for sufficiently large $T$ we can make the actual index $I(Y,Y)$ negative on the interval $[0,T]$, and so there is a conjugate point occurring at some time $\tau<T$.
\end{proof}


\begin{proof}[Proof of Corollary \ref{indexthmQversion}]
The easiest way to proceed is to simply compute the difference between $I_1$ given by \eqref{alphacondition} and $I_2$ given by \eqref{indexVversion}, and show that it is zero. We get
$$I_1 - I_2 =
\int_0^R \frac{G(r)}{\varphi(r)} \Big( u'(r)^2 \xi(r)^2 - 2 u'(r)\big(\alpha - u(r)\big) \xi(r)\xi'(r)\Big) - \big( \alpha - u(r)\big) \xi(r)^2 \, \frac{d}{dr} \big( \omega(r)-2v(r)\big) \, dr.
$$
Now note that by formula \eqref{vorticityGdef} and \eqref{vdef}, we have $\omega(r) - 2v(r) = \frac{G(r)u'(r)}{\varphi(r)}$, so that we get
$$ I_1-I_2 = \int_0^R \Big[ -\frac{G(r)u'(r)}{\varphi(r)} \, \frac{d}{dr}\Big( (\alpha - u(r)) \xi(r)^2\big) - (\alpha - u(r))\xi(r)^2 \, \frac{d}{dr} \Big(\frac{G(r)u'(r)}{\varphi(r)}\Big)\Big] \, dr,
$$
which is obviously the integral of a total derivative, and the function vanishes at the endpoints since $\xi(0)=\xi(R)=0$.

Similarly to prove this is equivalent to \eqref{indexQversion}, we show that $I_2-I_3$ simplifies to zero.
\begin{align*}
I_2-I_3 &= \int_0^R 2\big( \alpha - u(r)\big)^2 Q(r) \xi(r) \xi'(r) + \big(\alpha - u(r)\big)^2 Q'(r) \xi(r)^2 - 2 Q(r) \big( \alpha - u(r)\big) u'(r) \xi(r)^2 \, dr \\
&= \int_0^R \frac{d}{dr} \Big( \big(\alpha - u(r)\big)^2 Q(r) \xi(r)^2\Big) \, dr,
\end{align*}
and again this vanishes since $\xi(0)=\xi(R)=0$.
\end{proof}

\section{Proofs of Corollaries \ref{killingcorollary}, \ref{uprimecorollary}, and \ref{uprimeorigincorollary}}\label{corollaryproofsection}


\begin{proof}[Proof of Corollary \ref{killingcorollary}]
We assume that $u(r)=1$. Then formula \eqref{vorticityGdef} implies that
the vorticity is given by $\omega(r) = G'(r)/\varphi(r)$, and thus its derivative is given by
\begin{equation}\label{vorticityprime}
\omega'(r) = \frac{d}{dr} \left( \frac{G'(r)}{\varphi(r)}\right).
\end{equation}


The condition \eqref{alphacondition} for existence of conjugate points then takes the form (using $\beta = \alpha-1$ for simplicity)
\begin{equation}\label{alphaconditionisochron}
I = \int_0^R \frac{\beta^2}{\varphi(r)} \big[ G(r)\xi'(r)^2 + E(r)\xi(r)^2\big] - \beta \frac{d}{dr} \left( \frac{G'(r)}{\varphi(r)}\right) \xi(r)^2 \, dr < 0.
\end{equation}
For any fixed $\xi$, this takes the form
$ I(\beta) = \beta^2 A - \beta B $
for some positive number $A$ and some number
$B=\int_0^R \frac{d}{dr} \big( \frac{G'}{\varphi}\big) \xi^2 \, dr$, which reaches its minimum at
$$ I\left( \frac{B}{2A}\right) = -\frac{B^2}{2A}. $$
Hence as long as we can find a function $\xi$ that makes $B$ nonzero, we can make the index $I$ negative and obtain a conjugate point by Theorem \ref{mainindexthm}. The only way $B$ must be zero for every $\xi$ is if $\frac{d}{dr}(G'(r)/\varphi(r))$ is identically zero, i.e., if $G'(r) = c \varphi(r)$ for some $c\in\mathbb{R}$.

In particular in the rotationally symmetric case we have $G(r) = \varphi(r)^2$, so that $G'(r)/\varphi(r) = 2\varphi'(r)$, and the only way this is constant is if $\varphi''(r)=0$. Since $\varphi''(r) = -\kappa(r)\varphi(r)$ in terms of the Gaussian curvature, and $\varphi(r)$ is nonzero on $(0,R)$, the only way $B$ would be identically zero in the rotationally symmetric case is if the curvature is identically zero.
\end{proof}

The only cases in which we do not get conjugate points are rotational flows on flat spaces: e.g., rotation of a disc or annulus with $\varphi(r)=r$, or on a flat torus with $\varphi(r)\equiv 1$. It is interesting that either positive curvature or negative curvature of the underlying manifold $M$ could both contribute to conjugate points on $\Diffmu(M)$.

%

\begin{proof}[Proof of Corollary \ref{uprimecorollary}]
We use Corollary \ref{indexthmQversion}, in particular the formula \eqref{indexVversion}.
Choose $\alpha = u(r_0)$ in formula \eqref{indexVversion}. For a small $\varepsilon>0$, we consider functions $\xi$ having support in $(r_0-\varepsilon, r_0+\varepsilon)$, which is possible since $r_0$ is neither $0$ nor $R$.

Since $u'(r_0)=0$ and $u''(r_0)\ne 0$,  we have
$$ u(r)-\alpha = \tfrac{1}{2} u''(r_0) (r-r_0)^2 + O(\varepsilon^3).$$
Hence the condition \eqref{indexVversion} becomes
\begin{multline*}
I := \int_{r_0-\varepsilon}^{r_0+\varepsilon} \frac{\big[\tfrac{1}{2} u''(r_0) (r-r_0)^2 + O(\varepsilon^3)\big]^2}{\varphi(r_0) + O(\varepsilon)} \Big( \big[G(r_0)+O(\varepsilon)\big]\xi'(r)^2 + \big[E(r_0)+O(\varepsilon)\big]\xi(r)^2\Big) \\
+ 2\big[ v'(r_0)+O(\varepsilon)\big] \big[ \tfrac{1}{2} u''(r_0)(r-r_0)^2 + O(\varepsilon^3)\big] \xi(r)^2 \, dr < 0.
\end{multline*}


Changing variables to $s = \frac{r-r_0}{\varepsilon}$ and writing $\xi(r) = \zeta((r-r_0)/\varepsilon)$, this condition becomes
$$ I = \varepsilon^3 \int_{-1}^1 \tfrac{1}{4} u''(r_0)^2 s^4 \varphi(r_0) \zeta'(s)^2 - u(r_0)K(r_0) \varphi(r_0)  u''(r_0) s^2 \zeta(s)^2 \, ds + O(\varepsilon^4) < 0.$$
Clearly since $\varphi(r_0)>0$, for sufficiently small $\varepsilon>0$, we can make $I$ negative as long as for some function $\zeta$ supported in $[-1,1]$, we have
\begin{equation}\label{newcondition}
\tfrac{1}{4} u''(r_0)^2 \int_{-1}^1 s^4 \zeta'(s)^2 \, ds - K(r_0) u(r_0) u''(r_0) \int_{-1}^1 s^2 \zeta(s)^2 \, ds < 0.
 \end{equation}

The inequality
$$ \int_0^1 s^4 \zeta'(s)^2 \,ds \ge \frac{9}{4} \int_0^1 s^2 \zeta(s)^2 \,ds \qquad \text{if $\zeta(1)=0$}$$
is a special case of a general weighted inequality appearing in \cite{whipschains}; this case is easily proved by computing
\begin{align*}
0 &\le \int_0^1 \big( s^2 \zeta'(s) + \tfrac{3}{2} s \zeta(s)\big)^2 \,ds \\
&= \int_0^1 s^4 \zeta'(s)^2 \, ds + 3 \int_0^1 s^3 \zeta(s)\zeta'(s) \, ds + \tfrac{9}{4} \int_0^1 s^2 \zeta(s)^2 \, ds \\
&= \int_0^1 s^4 \zeta'(s)^2 \, ds - \frac{9}{4} \int_0^1 s^2 \zeta(s)^2 \, ds,
\end{align*}
after integrating the middle term by parts. The constant is optimal, as seen by using the test function
$ \zeta(s) = (1-s^2)^2 \lvert s\rvert^{-3/2+\delta}$ for $\delta>0$, which gives
\begin{equation}\label{testfunctioninequality}
\frac{\int_{-1}^1 s^4 \zeta'(s)^2 \, ds}{\int_{-1}^1 s^2 \zeta(s)^2 \, ds} = \frac{9}{4} + \frac{4\delta}{3} + \frac{\delta^2}{3}.
\end{equation}

Thus choosing a smooth $\zeta$ which is close in $H^1$ to such a near-optimal test function, with the ratio in \eqref{testfunctioninequality} smaller than $\tfrac{9}{4}+\varepsilon$, we find that the condition \eqref{newcondition} is equivalent to
\begin{equation}\label{newcondition2}
\tfrac{1}{4} u''(r_0)^2 (\tfrac{9}{4}+\varepsilon) - K(r_0) u(r_0) u''(r_0) < 0.
\end{equation}
Thus condition \eqref{newcondition2} holds for some $\varepsilon>0$ if and only if the condition \eqref{newcondition} holds.
\end{proof}

\begin{proof}[Proof of Corollary \ref{uprimeorigincorollary}]
As in the proof of Corollary \ref{uprimecorollary}, we set $\alpha = u(0)$.
For a small $\varepsilon>0$, we consider odd functions $\xi$ of $r$ having support in $(-\varepsilon, \varepsilon)$.

Since $u'(0)=0$ and $u''(0)\ne 0$,  we have
$$ u(r)-\alpha = \tfrac{1}{2} u''(0) r^2 + O(\varepsilon^3).$$
We also recall that by Lemma \ref{polarcoordsgeneral}, the functions $E$, $\varphi$, and $G$ behave like those in standard Euclidean coordinates.
That is, $E(r) = E(0) + O(\varepsilon)$, $\varphi(r) = \varphi'(0)r + O(\varepsilon^2)$, and $G(r) = \tfrac{1}{2} G''(0) r^2 + O(\varepsilon^3)$,
where all the lowest-order coefficients are positive.
The formula \eqref{vdef} gives
$$ v'(r) = \frac{d}{dr}\left( \frac{G'(r)u(r)}{2\varphi(r)}\right).$$
Now $G'(r)$ and $\varphi(r)$ are both odd functions of $r$, while $u(r)$ is an even function of $r$; thus $v$ is an even function of $r$, and we have
$v'(r) = v''(0)r + O(\varepsilon^2)$.

The condition \eqref{indexVversion} becomes
\begin{multline}\label{indexVorigin}
I := \int_0^{\varepsilon} \frac{\big[\tfrac{1}{2} u''(0) r^2 + O(\varepsilon^3)\big]^2}{\varphi'(0)r + O(\varepsilon^2)} \Big( \big[\tfrac{1}{2}G''(0)r^2+O(\varepsilon^3)\big]\xi'(r)^2 + \big[E(0)+O(\varepsilon)\big]\xi(r)^2\Big) \\
+ 2\big[ rv''(0)+O(\varepsilon^2)\big] \big[ \tfrac{1}{2} u''(0)r^2 + O(\varepsilon^3)\big] \xi(r)^2 \, dr < 0.
\end{multline}
So we just need to compute $v''(0)$.
Using L'Hopital's rule and the fact that all odd derivatives of $G$ and $u$ are zero at $r=0$, while all even derivatives of $\varphi(r)$ are zero at $r=0$, we get
\begin{equation}\label{vprimeprimezero}
v''(0) = \frac{u(0)\varphi'(0)G^{iv}(0) - u(0) \varphi'''(0) G''(0) + 3 u''(0) \varphi'(0) G''(0)}{6\varphi'(0)^2}.
\end{equation}
Again in the rotationally symmetric case, we have $G(r) = \varphi(r)^2$, so that we get the simplification
\begin{equation}\label{vpp0}
v''(0) = u(0) \varphi'''(0) + u''(0) \varphi'(0) = \big( -\kappa(0) u(0) + u''(0)\big) \varphi'(0).
\end{equation}

As in the proof of Corollary \ref{uprimecorollary}, we change variables to $s = \frac{r}{\varepsilon}$ and write $\xi(r) = \zeta(r/\varepsilon)$. Then the index \eqref{indexVorigin} becomes
$$ I = \varepsilon^3 \int_0^1  \bigg[ \frac{u''(0)^2G''(0)}{8\varphi'(0)} s^5\zeta'(s)^2 + \Big( \frac{u''(0)^2E(0)}{4\varphi'(0)}+ v''(0)u''(0)\Big)  s^3 \zeta(s)^2 \bigg]\, ds + O(\varepsilon^4) < 0.$$
For sufficiently small $\varepsilon>0$, we can make $I$ negative as long as for some function $\zeta$ supported in $[0,1]$, extending to a differentiable odd function on $[-1,1]$ we have
\begin{equation}\label{newconditionorigin}
\frac{G''(0)u''(0)^2}{8\varphi'(0)} \int_0^1 s^5 \zeta'(s)^2 \, ds + \Big(\frac{u''(0)^2E(0)}{4\varphi'(0)}+ v''(0)u''(0)\Big) \int_0^1 s^3 \zeta(s)^2 \, ds < 0.
\end{equation}

Using the same strategy as above, we prove that
$$ \int_0^1 s^5 \zeta'(s)^2 \,ds \ge 4 \int_0^1 s^3 \zeta(s)^2 \,ds \qquad \text{if $\zeta(1)=0$}$$
by computing
$$
0 \le \int_0^1 \big( s^3 \zeta'(s) + 2 s \zeta(s)\big)^2 \,ds,$$
again integrating the middle term by parts. Using the test function
$ \zeta(s) = s^{-2+\delta} (1-s)$ for $\delta>0$ gives
\begin{equation}\label{testfunctionorigininequality}
\frac{\int_0^1 s^5 \zeta'(s)^2 \, ds}{\int_0^1 s^3 \zeta(s)^2 \, ds} = \delta^2+\delta+4,
\end{equation}
so again we can get as close as we want by approximating this function.

Thus as in the proof of Corollary \ref{uprimecorollary}, the sufficient condition is
\begin{equation}\label{newconditionorigin2}
\frac{G''(0)u''(0)^2}{2\varphi'(0)} + \frac{u''(0)^2E(0)}{4\varphi'(0)}+ v''(0)u''(0) < 0.
\end{equation}
Using formula \eqref{vpp0}, we see that condition \eqref{newconditionorigin2} holds for some $\varepsilon>0$ if and only if the condition \eqref{originextremecondition} holds.

In the rotationally symmetric case we have $G(r) = \varphi(r)^2$, along with $E(0)=1$ and $\varphi'(0)=1$.
Using $\phi(r) = \phi'(0) r + \frac{1}{6} \phi'''(0) r^3 + \cdots$, we get
$$ G(r) = \phi'(0)^2 r^2 + \tfrac{1}{3} \phi'(0) \phi'''(0) r^4 + \cdots,$$
so that $G''(0) = 2\phi'(0)^2$ and $G^{iv}(0) = 8 \phi'(0) \phi'''(0) = -8 \kappa(0) \phi'(0)^2$. Plugging these into \eqref{originextremecondition}, we get the simplification \eqref{originextremerotational}.
\end{proof}

\section{Proof of Theorem \ref{kolmocase}}\label{kolmosection}

In this section we study the Kolmogorov flows, with stream function on the torus of the form $f(x,y) = -\cos{mx} \cos{ny}$ with $m$ and $n$ both natural numbers. We compute the flow explicitly in terms of Jacobi elliptic functions and find explicit formulas for all the terms appearing in the formula \eqref{indexQversion}. It is interesting that the index form is essentially the same for all of them, regardless of $m$ and $n$.
This becomes more obvious if we change the coordinates.

\begin{lemma}\label{noneuclideankolmo}
Suppose $m$ and $n$ are positive integers, and $f\colon \mathbb{T}^2\to\mathbb{R}$ is given by $f(x,y) = -\cos{mx}\cos{ny}$.
Define coordinates $(X,Y)$ by $X = \sin{mx}$ and $Y = \sin{ny}$. Then in these coordinates the metric becomes
\begin{equation}\label{metricuv}
ds^2 = \frac{dX^2}{m^2(1-X^2)} + \frac{dY^2}{n^2(1-Y^2)},
\end{equation}
with area form given by
\begin{equation}\label{areaformuv}
\mu = \frac{dX\wedge dY}{mn\sqrt{(1-X^2)(1-Y^2)}},
\end{equation}
and the function $f$ is given by
\begin{equation}\label{functionuv}
f(u,v) = -\sqrt{(1-X^2)(1-Y^2)}.
\end{equation}
The cell $C$ in Lemma \ref{polarcoordsgeneral} is given in $(X,Y)$ coordinates by the square $-1<X<1$ and $-1<Y<1$.
\end{lemma}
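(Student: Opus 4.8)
The plan is to perform the change of variables directly; the only point requiring care is the choice of sign for the square roots, which is dictated by staying inside the cell $C$.

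First I would observe that on the cell $C = (-\tfrac{\pi}{2m},\tfrac{\pi}{2m})\times(-\tfrac{\pi}{2n},\tfrac{\pi}{2n})$ the arguments $mx$ and $ny$ lie in $(-\tfrac{\pi}{2},\tfrac{\pi}{2})$, so $\cos mx$ and $\cos ny$ are strictly positive there, and the map $(x,y)\mapsto (X,Y) = (\sin mx,\sin ny)$ is a diffeomorphism of $C$ onto the open square $(-1,1)^2$ (since $\sin$ is strictly increasing on $(-\tfrac{\pi}{2},\tfrac{\pi}{2})$). This simultaneously proves the last assertion about the cell, and it lets me write $\cos mx = \sqrt{1-X^2}$ and $\cos ny = \sqrt{1-Y^2}$ on $C$ using the positive branch.

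Next I would differentiate: $dX = m\cos(mx)\,dx = m\sqrt{1-X^2}\,dx$, hence $dx = dX/(m\sqrt{1-X^2})$, and likewise $dy = dY/(n\sqrt{1-Y^2})$. Substituting into the flat metric $ds^2 = dx^2+dy^2$ yields \eqref{metricuv}, and substituting into $\mu = dx\wedge dy$ yields \eqref{areaformuv}. Finally $f(x,y) = -\cos mx\cos ny = -\sqrt{1-X^2}\,\sqrt{1-Y^2}$, which is \eqref{functionuv}.

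I do not expect any genuine obstacle here; the computation is routine, and the only thing to be careful about is that the identity $\cos mx = +\sqrt{1-X^2}$ (and its analogue in $y$) is valid precisely because we restrict to the cell on which the cosines do not change sign, and that the coordinate change is a true diffeomorphism onto $(-1,1)^2$ rather than merely a local one.
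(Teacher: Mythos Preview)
Your proof is correct and essentially identical to the paper's own argument: both invert the coordinate change on the cell where $\cos mx,\cos ny>0$, substitute $dx=dX/(m\sqrt{1-X^2})$ and $dy=dY/(n\sqrt{1-Y^2})$ into the flat metric and area form, and read off $f$ from the positive square roots. The paper adds a short paragraph classifying the critical points of $f$ to justify that the relevant cell really is $\lvert x\rvert<\pi/(2m)$, $\lvert y\rvert<\pi/(2n)$, which you take as given; you may want to include a sentence to that effect.
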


\begin{proof}
The critical points of $f$ happen when both $\cos{mx}\sin{ny}$ and $\sin{mx}\cos{ny}$ are zero, and these conditions imply that either $x=\frac{j\pi}{m}$ and $y=\frac{k\pi}{n}$, or $x=\frac{(j+1/2)\pi}{m}$ and $y=\frac{(k+1/2)\pi}{n}$, for integers $j$ and $k$. The origin is the former type and has $f_{xx}(0,0) = m^2$, $f_{xy}(0,0) = 0$, and $f_{yy}(0,0)=n^2$. Points of the latter type form the corners of a rectangular grid in the level set $f=0$, and the component of the set containing $(0,0)$ and having $f<0$ is given by $\lvert x\rvert < \frac{\pi}{2m}$ and $\lvert y\rvert< \frac{\pi}{2n}$. This becomes the square $\lvert X\rvert < 1$ and $\lvert Y\rvert < 1$ under the coordinate change, which is bijective on $C$.

Everything else is easy: since $x = \frac{1}{m} \arcsin{X}$ and $y=\frac{1}{n} \arcsin{Y}$, we get that the metric
$ ds^2 = dx^2 + dy^2$ becomes \eqref{metricuv}.
Thus the area form is \eqref{functionuv} by the standard definition of area form on a manifold (or by computing the Jacobian directly).
Clearly the function $f$ is given by \eqref{functionuv} on the cell $C$, where $f$ must be negative.
\end{proof}

In the $(X,Y)$ coordinates, the velocity field $U=\sgrad f$ is given by
\begin{equation}\label{Uinuvcoords}
U = \frac{1}{\varphi(X,Y)} \left( -\frac{\partial f}{\partial Y}\, \frac{\partial}{\partial X} + \frac{\partial f}{\partial X} \, \frac{\partial}{\partial Y}\right) = -mnY(1-X^2) \, \partial_X + mnX(1-Y^2) \, \partial_Y.
\end{equation}
To construct the polar coordinates as in Lemma \ref{polarcoordsgeneral}, we need to find the flow, which requires the use of Jacobi elliptic functions.

\begin{lemma}\label{polarkolmoprop}
For the velocity field $U$ given by \eqref{Uinuvcoords}, the polar coordinates defined by Lemma \ref{polarcoordsgeneral} can be chosen as
\begin{equation}\label{uvpolartau}
X = \frac{r \cn(\tau, r)}{\dn(\tau,r)}, \qquad Y = r \sn(\tau, r), \qquad \text{where } \tau = \frac{2\ellipticK(r) \theta}{\pi}.
\end{equation}
Here $\sn$, $\cn$, and $\dn$ are the usual Jacobi elliptic functions, while $\ellipticK(r)$ is the complete elliptic integral
$$ \ellipticK(r) = \int_0^{\pi/2} \frac{d\phi}{\sqrt{1-r^2\sin^2{\phi}}}.$$
\end{lemma}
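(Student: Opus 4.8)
The plan is to verify directly that the parametrization \eqref{uvpolartau} satisfies all the requirements of the construction in Lemma \ref{polarcoordsgeneral}: that $f$ is constant on the curves $r=\mathrm{const}$, that the flow of $U$ acts by $(t,r,\theta)\mapsto(r,\theta+2\pi t/P(r))$ for a suitable period function $P$, that the origin $r=0$ is the critical point of $f$ while $r=1$ corresponds to $\partial C$, and that $\theta$ is a genuine $2\pi$-periodic angular coordinate reducing to the standard one near $r=0$. The only real work is bookkeeping with the standard derivative and period formulas for the Jacobi elliptic functions; there is no analytic obstacle, and the whole computation is essentially the classical reduction of the pendulum-type system \eqref{Uinuvcoords} to Jacobi elliptic functions.

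First I would record the elliptic identities that make everything work. With modulus $r$ one has $\dn^2=1-r^2\sn^2$ and $\cn^2=1-\sn^2$, so $1-Y^2=\dn^2(\tau,r)$ and $\dn^2-r^2\cn^2=1-r^2$, whence $1-X^2=(1-r^2)/\dn^2(\tau,r)$. Multiplying, $(1-X^2)(1-Y^2)=1-r^2$; comparing with \eqref{functionuv}, this gives $f=-\sqrt{1-r^2}$, a function of $r$ alone, so the stream function is $F(r)=-\sqrt{1-r^2}$. The same identity shows that as $r\to1$ the level set degenerates onto the square boundary $\partial C$, while at $r=0$ we get $X=Y=0$, the critical point of $f$; and since $\ellipticK(0)=\pi/2$ we have $\tau\to\theta$ as $r\to0$, so that $X\approx r\cos\theta$ and $Y\approx r\sin\theta$, i.e. the chart reduces to standard polar coordinates near the origin, with $r$ ranging bijectively over $[0,1)$ as $f$ ranges over $[-1,0)$.

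Next I would verify the flow. Using $\partial_\tau\sn=\cn\dn$, $\partial_\tau\cn=-\sn\dn$, and $\partial_\tau\dn=-r^2\sn\cn$, a short computation together with the identities above gives $\partial_\tau Y=r\cn\dn=X(1-Y^2)$ and $\partial_\tau X=-r\sn(1-r^2)/\dn^2=-r\sn\,(1-X^2)=-Y(1-X^2)$. Since $r$ is constant along every trajectory of $U$ and $\tau=2\ellipticK(r)\theta/\pi$ depends linearly on $\theta$, it follows that setting $\theta(t)=\theta_0+\tfrac{\pi mn}{2\ellipticK(r)}\,t$ makes $t\mapsto(X(t),Y(t))$ solve the flow equations $\dot X=-mnY(1-X^2)$, $\dot Y=mnX(1-Y^2)$ read off from \eqref{Uinuvcoords}. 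Hence in the coordinates $(r,\theta)$ we have $U=\tfrac{\pi mn}{2\ellipticK(r)}\,\partial_\theta$, i.e. the velocity profile is $u(r)=\pi mn/(2\ellipticK(r))$ and the period is $P(r)=4\ellipticK(r)/mn$, matching the form \eqref{Upolargeneral}.

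Finally I would confirm that $\theta$ is a bona fide angular variable. As $\theta$ runs over $[0,2\pi)$ the argument $\tau$ runs over $[0,4\ellipticK(r))$, and $4\ellipticK(r)$ is the common period of $\sn(\cdot,r)$ and of the ratio $\cn(\cdot,r)/\dn(\cdot,r)$ (the latter since $\cn$ has period $4\ellipticK$ and $\dn$ has period $2\ellipticK$, so $\cn/\dn$ has period $4\ellipticK$ and not $2\ellipticK$); hence the orbit closes up exactly once and $\Phi$ is a bijection of $(0,1)\times[0,2\pi)$ onto $C\setminus\{(0,0)\}$ that extends $2\pi$-periodically in $\theta$. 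That $\varphi$ depends only on $r$, together with the relation $u=F'/\varphi$, then follows automatically from the general conclusions of Lemma \ref{polarcoordsgeneral} once the flow is known to have the stated form, so no further verification is needed. The main obstacle, such as it is, is purely notational: keeping the relation among $t$, $\theta$, and $\tau$ and the various $\sn,\cn,\dn$ periods straight.
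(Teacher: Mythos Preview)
Your proposal is correct and follows essentially the same route as the paper: both arguments verify that the formulas \eqref{uvpolartau} solve the flow system $\dot X=-mnY(1-X^2)$, $\dot Y=mnX(1-Y^2)$ using the standard derivative identities for $\sn$, $\cn$, $\dn$, and then read off the period $P(r)=4\ellipticK(r)/mn$ from the elliptic periodicity. The paper phrases this as solving the initial value problem with $c(s)=(s,0)$ and then invoking the shift identities \eqref{pihalfshift} for the period, whereas you verify the given answer directly and check the $4\ellipticK$-periodicity of $\sn$ and $\cn/\dn$; but the underlying computation is the same.
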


\begin{proof}
We choose the curve $c$ in Lemma \ref{polarcoordsgeneral} to be $c(s) = (s,0)$ in the $(X,Y)$ coordinates. Then the flow $\gamma$ is obtained by solving the system
\begin{equation}\label{uvvectorfieldeqn}
\frac{dX}{dt} = -mnY(1-X^2), \qquad \frac{dY}{dt} = mnX(1-Y^2), \qquad X(0) = s, \qquad Y(0) = 0.
\end{equation}
Using the basic derivative formulas for the Jacobi elliptic functions (see \cite{elliptic} or \cite{gradry} for these and other formulas)
$$ \frac{d}{d\tau} \sn(\tau,r) = \cn(\tau,r) \dn(\tau,r), \qquad \frac{d}{d\tau} \cn(\tau,r) = -\sn(\tau,r)\dn(\tau,r), \qquad \frac{d}{d\tau} \dn(\tau,r) = -r^2 \sn(\tau,r)\cn(\tau,r)$$
together with the initial values
$$ \sn(0,r) = 0, \qquad \cn(0,r) = 1, \qquad \dn(0,r) = 1$$
and the basic identities
\begin{equation}\label{basicellipticidentities}
\cn(\tau,r)^2 + \sn(\tau,r)^2 = 1, \qquad r^2 \sn(\tau,r)^2 + \dn(\tau,r)^2 = 1, \qquad \dn(\tau,r)^2 - r^2 \cn(\tau,r)^2 = 1-r^2,
\end{equation}
we can compute that
\begin{equation}\label{flowuvcoords}
X(t,s) = \frac{s\cn(mn t,s)}{\dn(mnt,s)}, \qquad Y(t,s) = s \sn(mnt, s)
\end{equation}
satisfy the system \eqref{uvvectorfieldeqn}.

We have the following identities (analogous to the $(\theta\mapsto \pi/2-\theta)$ shift identities for standard trigonometric functions):
\begin{equation}\label{pihalfshift}
\sn(\tau+\ellipticK(r), r) = \frac{\cn(\tau,r)}{\dn(\tau,r)} \qquad \text{and}\qquad \frac{\cn(\tau+\ellipticK(r),r)}{\dn(\tau+\ellipticK(r),r)} = -\sn(\tau,r),
\end{equation}
which imply that the period of the flow \eqref{flowuvcoords} is $P(r) = \frac{4\ellipticK(r)}{mn}$. Thus the polar coordinate transformation is \eqref{uvpolartau}.
\end{proof}

Now we compute the velocity field, metric components, and area form in these polar coordinates.

\begin{lemma}\label{kolmometricstuff}
In polar coordinates defined by Lemma \ref{polarkolmoprop}, the velocity field is given by
\begin{equation}\label{Ukolmopolar}
U = \frac{\pi mn}{2\ellipticK(r)} \, \frac{\partial}{\partial \theta}
\end{equation}
while the area form is given by
\begin{equation}\label{areakolmopolar}
\mu = \varphi(r) \, dr\wedge d\theta, \qquad \text{where } \varphi(r) = \frac{2r\ellipticK(r)}{\pi mn\sqrt{1-r^2}}.
\end{equation}
The averages of the metric components from \eqref{EGdef} are given by
\begin{equation}\label{Gkolmo}
G(r) = \frac{4(m^2+n^2) r(1-r^2) \ellipticK(r)\ellipticK'(r)}{m^2n^2 \pi^2}
\end{equation}
and
\begin{equation}\label{Ekolmo}
E(r) = 
\frac{(m^2+n^2) J(r)}{m^2n^2(1-r^2)^2\ellipticK(r)},
\end{equation}
where $J$ is defined by 
\begin{equation}\label{Jdef}
J(r) = \int_0^{\ellipticK(r)}
 \big[ \sn(\tau,r)\dn(\tau,r) - \cn(\tau,r) Z(\tau,r)\big] \, d\tau.
\end{equation}
\end{lemma}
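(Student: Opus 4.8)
The plan is to push the metric \eqref{metricuv} and area form \eqref{areaformuv} forward through the change of variables \eqref{uvpolartau} and read off each quantity, the averaged metric components being the only nontrivial part.

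The velocity field \eqref{Ukolmopolar} is immediate: Lemma \ref{polarkolmoprop} gives the period $P(r) = 4\ellipticK(r)/(mn)$, so \eqref{Upolargeneral} from Lemma \ref{polarcoordsgeneral} yields $U = \tfrac{2\pi}{P(r)}\,\partial_\theta = \tfrac{\pi mn}{2\ellipticK(r)}\,\partial_\theta$. For the area form I would note that the radial curve is $c(s) = (s,0)$ in $(X,Y)$ coordinates, so by \eqref{functionuv} the stream function along it is $F(r) = f(r,0) = -\sqrt{1-r^2}$, hence $F'(r) = r/\sqrt{1-r^2}$; since $u(r) = F'(r)/\varphi(r)$ by Lemma \ref{polarcoordsgeneral}, dividing gives $\varphi(r) = F'(r)/u(r) = \tfrac{2r\ellipticK(r)}{\pi mn\sqrt{1-r^2}}$, which is \eqref{areakolmopolar}. (Equivalently, compute the Jacobian of \eqref{uvpolartau} and compare with \eqref{areaformuv}.)

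For $G(r)$, I would differentiate \eqref{uvpolartau} in $\theta$ at fixed $r$ — only $\tau$ depends on $\theta$, with $\partial_\theta\tau = 2\ellipticK(r)/\pi$ — substitute into \eqref{metricuv}, and simplify with the identities \eqref{basicellipticidentities}, in particular $1-X^2 = (1-r^2)/\dn(\tau,r)^2$ and $1-Y^2 = \dn(\tau,r)^2$, to reach $g_{22} = \tfrac{4r^2\ellipticK(r)^2}{\pi^2}\big(\tfrac{(1-r^2)\sn^2}{m^2\dn^2} + \tfrac{\cn^2}{n^2}\big)$. Averaging over $\theta$ then reduces, after the substitution $\tau = 2\ellipticK(r)\theta/\pi$ and the $2\ellipticK(r)$-periodicity of $\sn^2, \cn^2, \dn^{-2}$, to evaluating $\int_0^{\ellipticK(r)}\cn^2\,d\tau$ and $(1-r^2)\int_0^{\ellipticK(r)}\sn^2/\dn^2\,d\tau$; using $\int_0^{\ellipticK}\dn^2\,d\tau = \ellipticE$ and $\int_0^{\ellipticK}\dn^{-2}\,d\tau = \ellipticE/(1-r^2)$, both turn out to equal $\tfrac{\ellipticE - (1-r^2)\ellipticK}{r^2} = r(1-r^2)\ellipticK'(r)$ by the standard differentiation formula for $\ellipticK$, and combining the $m^{-2}$ and $n^{-2}$ contributions gives \eqref{Gkolmo}.

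The hard part will be $E(r)$, that is, computing $g_{11} = \langle\partial_r,\partial_r\rangle$ with $\partial_r$ taken at fixed $\theta$: now $\tau = 2\ellipticK(r)\theta/\pi$ contributes through the chain rule a term $\partial_r\tau|_\theta = \tau\,\ellipticK'(r)/\ellipticK(r)$ in addition to the derivatives of $\sn,\cn,\dn$ with respect to the modulus $r$. The latter are the standard (cumbersome) formulas involving the incomplete elliptic integral of the second kind $\mathcal E(\tau,r) = \int_0^\tau \dn(t,r)^2\,dt$, equivalently the Jacobi zeta function $Z(\tau,r) = \mathcal E(\tau,r) - (\ellipticE/\ellipticK)\,\tau$. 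After substituting these into $g_{11}$ and simplifying with \eqref{basicellipticidentities}, the linear-in-$\tau$ pieces from $\partial_r\tau$ should combine with the $\mathcal E$-pieces from the modulus derivatives exactly into $Z$, and averaging over $\theta$ (again passing to $\int_0^{\ellipticK(r)}$ in $\tau$) should collapse to the integral $J(r)$ of \eqref{Jdef}, giving \eqref{Ekolmo}. I would organize this by first writing $g_{11}$ as a single rational expression in $\sn,\cn,\dn,\tau,Z$ before integrating; the real work is checking that every extraneous term cancels and that the surviving integrand is precisely $\sn(\tau,r)\dn(\tau,r) - \cn(\tau,r)Z(\tau,r)$ up to the stated prefactor $\tfrac{m^2+n^2}{m^2n^2(1-r^2)^2\ellipticK(r)}$.
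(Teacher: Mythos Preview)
Your outline is correct and would work, but the paper shortcuts both nontrivial computations. For $G(r)$, instead of averaging $g_{22}$ directly, the paper uses the vorticity relation \eqref{vorticityGdef}: since $\omega = -(m^2+n^2)F$, one has $\tfrac{d}{dr}\big(u(r)G(r)\big) = \omega(r)\varphi(r) = -(m^2+n^2)F(r)\varphi(r)$, and a single antiderivative yields $G(r)$ with no elliptic integrals to evaluate. For $E(r)$, the paper first observes from the shift formulas \eqref{pihalfshift} that $X(r,\tfrac{\pi}{2}-\theta)=Y(r,\theta)$; this symmetry makes the $\theta$-average of the $X$-term in $g_{11}$ equal to that of the $Y$-term with $m$ and $n$ interchanged, so the two combine into the prefactor $\tfrac{1}{m^2}+\tfrac{1}{n^2}$ and only the single derivative $\partial_r Y=\partial_r\big(r\,\sn(\tau,r)\big)$ needs computing. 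A reference formula (Byrd--Friedman 710.51) then gives $\tfrac{1}{\sqrt{1-Y^2}}\,\partial_r Y = (\sn\dn-\cn\,\zn)/(1-r^2)$ directly, with the $\tau\,\ellipticK'/\ellipticK$ chain-rule piece already absorbed into $\zn$. Your plan of also computing $\partial_r X$ for $X=r\,\cn/\dn$ is viable but roughly doubles the labor; you would effectively rediscover the symmetry when the two averages turned out equal. One small correction: the surviving integrand in \eqref{Jdef} should be $\big[\sn\dn-\cn\,\zn\big]^2$, since $g_{11}$ is quadratic in the $r$-derivatives.
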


\begin{proof}
We first recall that the function is given in $(X,Y)$ coordinates by \eqref{functionuv}, and since our radial curve is $X=r$, $Y=0$, we have
$f(X,Y) = f(r,0)$ for all $(X,Y)$, so that
\begin{equation}\label{rintermsofuv}
(1-X^2)(1-Y^2) = 1-r^2 \qquad \text{or} \qquad r^2 = X^2 + Y^2 - X^2Y^2.
\end{equation}
Hence in particular we have
\begin{equation}\label{FintermsofRkolmo}
F(r) = -\sqrt{1-r^2} \text{ for } 0\le r\le 1.
\end{equation}
Formula \eqref{Ukolmopolar} follows from formula \eqref{Upolargeneral}, since the period is $P(r) = \frac{4\ellipticK(r)}{mn}$. Finally formula \eqref{velocitypolar} gives a quick way to compute the area element $\varphi(r)$, which is
$$ \varphi(r) = \frac{F'(r)}{u(r)} = \frac{2r\ellipticK(r)}{\pi mn\sqrt{1-r^2}}.$$
Of course we could also compute these quantities directly using the coordinate transformation \eqref{uvpolartau} and the usual transformation formulas.

Next we need to compute the metric averages $E(r)$ and $G(r)$ from \eqref{EGdef}. The easiest way to compute $G(r)$ is to use formula \eqref{vorticityGdef}. Since
$$\omega(r)\varphi(r) = -(m^2+n^2) F(r)\varphi(r),$$
formulas \eqref{FintermsofRkolmo}, \eqref{Ukolmopolar}, and \eqref{areakolmopolar} imply that
$$ \frac{\pi mn}{2} \frac{d}{dr}\Big( \frac{G(r)}{\ellipticK(r)}\Big) =
(m^2+n^2) \frac{ 2r \ellipticK(r)}{\pi mn},$$
and integrating gives
$$ G(r) = \frac{4(m^2+n^2)}{\pi^2 m^2 n^2} \, \ellipticK(r) \big[\ellipticE(r)-(1-r^2) \ellipticK(r)\big],$$
which is equivalent to \eqref{Gkolmo}.

Computing $E(r)$ is much more difficult, but it becomes somewhat easier if we first observe that by the shift formulas \eqref{pihalfshift}, the oddness of $\sn$ and evenness of $\cn$ and $\dn$, and the definitions \eqref{uvpolartau} of $u$ and $v$, we have the complementary formula
\begin{equation}\label{uvcomplementary}
X(r,\tfrac{\pi}{2}-\theta) = Y(r,\theta).
\end{equation}
Also using \eqref{pihalfshift} we can see that integrating any function involving only even powers of $X$, $X_{\theta}$, or $X_r$ around the full $[0,2\pi]$ interval is the same as integrating it from $0$ to $\pi/2$ and multiplying by $4$. Finally integrating any even power of $X$ or its derivatives from $0$ to $\pi/2$ is the same as integrating the corresponding power of a derivative of $Y$ backwards from $\pi/2$ to $0$, which gives the same answer by the obvious change of variables. Hence we can reduce everything to integrals of $Y(r,\theta)$.

We first compute
$$
g_{11}(r,\theta) = \frac{1}{m^2(1-X(r,\theta)^2)} \, \Big( \frac{\partial X}{\partial r}(r,\theta)\Big)^2 +
\frac{1}{n^2(1-Y(r,\theta)^2)} \, \Big( \frac{\partial Y}{\partial r(r,\theta)}\Big)^2.$$
Then using the symmetry described above, we see that
\begin{equation}\label{Estep1}
\begin{split}
E(r) &= \frac{1}{2\pi} \int_0^{2\pi} g_{11}(r,\theta)\,d\theta \\
&= \frac{2}{\pi} \left(\frac{1}{m^2} + \frac{1}{n^2}\right) \int_0^{\pi/2} \frac{1}{(1-Y(r,\theta)^2} \, \Big( \frac{\partial Y}{\partial r}(r,\theta)^2\Big) \, d\theta.
\end{split}
\end{equation}

Computing the radial derivative of $Y(r,\theta)$, we get (using formula 710.51 of \cite{elliptic} or (3.10.9) of \cite{lawden})
\begin{align*}
\frac{1}{\sqrt{1-Y(r,\theta)^2}} \frac{\partial Y(r,\theta)}{\partial r} &= \frac{1}{(1-r^2)\dn(\tau,r)} \, \Big( \sn(\tau,r)\dn^2(\tau,r)^2 - \dn(\tau,r) \cn(\tau,r) \zn(\tau,r)\Big) \\
&=  \frac{\sn(\tau,r)\dn^2(\tau,r)^2 - \dn(\tau,r) \cn(\tau,r) \zn(\tau,r)}{1-r^2},
\end{align*}
where $\zn$ is the Jacobi Zeta function defined by
$$ \frac{\partial}{\partial \tau} \zn(\tau,r) = \dn(\tau,r)^2 - \frac{\ellipticE(r) \tau}{\ellipticK(r)},$$
so that (by the known integral for $\dn^2$) we get $\zn(0,r)=0$ and $\zn(\ellipticK(r),r)=0$.

%
Thus we obtain from \eqref{Estep1} that
\begin{align*}
E(r) &= \frac{2(m^2+n^2)}{\pi m^2n^2(1-r^2)^2} \int_0^{\pi/2}
 \big[ \sn(\tau,r)\dn(\tau,r) - \cn(\tau,r) \zn(\tau,r)\big]^2 \, d\theta \\
 &= \frac{(m^2+n^2)}{m^2n^2(1-r^2)^2\ellipticK(r)} \int_0^{\ellipticK(r)}
 \big[ \sn(\tau,r)\dn(\tau,r) - \cn(\tau,r) \zn(\tau,r)\big]^2 \, d\tau,
 \end{align*}
 which is equation \eqref{Ekolmo}.
\end{proof}

Unfortunately we are not able to get a more explicit form of the function $J(r)$ from \eqref{Jdef}.
However this ends up not mattering, since we only need a lower bound for $E$, as in the next proof.

\begin{proof}[Proof of Theorem \ref{kolmocase}]
First we observe the dependence on the pair $(m,n)$ of positive integers. Let us denote by $\varphi_{mn}(r)$, $u_{mn}(r)$, $G_{mn}(r)$, and $E_{mn}(r)$ the formulas \eqref{areakolmopolar}, \eqref{Ukolmopolar}, \eqref{Gkolmo}, and \eqref{Ekolmo}, with their explicit dependence on $(m,n)$. Then we obviously have:
\begin{alignat*}{3}
\varphi_{mn}(r) &= \frac{1}{mn} \, \varphi_{11}(r), \qquad &\qquad  u_{mn}(r) &= mn u_{11}(r) \\
G_{mn}(r) &= \frac{m^2+n^2}{m^2n^2}\, G_{11}(r), & E_{mn}(r) &= \frac{m^2+n^2}{m^2n^2} \, E_{11}(r).
\end{alignat*}
Similarly if we compute $v_{mn}$ using \eqref{vdef}, we get
$$ v_{mn} = \frac{G_{mn}'(r)u_{mn}(r)}{2\varphi_{mn}(r)} =
(m^2+n^2) \, \frac{G_{11}'(r) u_{11}(r)}{2\varphi_{11}(r)} = (m^2+n^2) v_{11}(r),$$
so that using \eqref{Qdef} we get
$$ Q_{mn}(r) = \frac{m^2+n^2}{mn}\, Q_{11}(r).$$
We thus find that in the quantity $M_{mn}(r)$ defined by \eqref{Qconditioncurvature}, all the terms have exactly the same dependence on $(m,n)$, with
$$ M_{mn}(r) = \frac{(m^2+n^2)^2}{m^2n^2} \, M_{11}(r).$$
We conclude that when seeking variation fields supported in a single cell of the flow, the value of $(m,n)$ doesn't matter: every flow has energy-reducing variations in a cell if and only if the $(1,1)$ flow does.

Now we will show that the $(1,1)$ flow \emph{does not}. So from now on we set $m=n=1$ in all the formulas for $u$, $\varphi$, $E$, and $G$.  For this purpose it is easier, since we don't have an explicit formula for $E(r)$, to estimate from below using Cauchy-Schwarz: we have
\begin{align*}
E(r) G(r) &= \frac{1}{2\pi^2} \left( \int_0^{2\pi} g_{11}(r,\theta)\,d\theta\right) \left(\int_0^{2\pi} g_{22}(r,\theta) \,d\theta\right) \\
&\ge \frac{1}{2\pi^2} \left( \int_0^{2\pi} \sqrt{g_{11}(r,\theta)g_{22}(r,\theta)} \, d\theta\right)^2 \\
&\ge \frac{1}{2\pi^2} \left( \int_0^{2\pi} \varphi(r) \, d\theta\right)^2 \\
&= \varphi(r)^2.
\end{align*}
We conclude that
\begin{equation}\label{Mdef}
M_{11}(r) = \frac{G(r)Q'(r)}{\varphi(r)} + Q(r)^2 - \frac{E(r)G(r)}{\varphi(r)^2} \le \overline{M}(r) := \frac{G(r)Q'(r)}{\varphi(r)} + Q(r)^2 - 1.
\end{equation}

For the quantity $\overline{M}(r)$ defined by the right side, we have explicit formulas.
We compute $v_{11}(r)$ from \eqref{vdef}, using \eqref{Gkolmo}, \eqref{Ukolmopolar}, and \eqref{areakolmopolar}, as follows:
$$ v_{11}(r) = \frac{G'(r)u(r)}{2\varphi(r)} = \sqrt{1-r^2}
\left( 1 + \frac{(1-r^2)\ellipticK'(r)^2}{\ellipticK(r)^2}\right).$$
Then we compute from \eqref{Qdef} that
$$ Q_{11}(r) = \frac{v_{11}'(r)}{u_{11}'(r)} = \frac{2}{\pi r \sqrt{1-r^2}} \Big( \frac{r^2 \ellipticK(r)^2}{\ellipticK'(r)} - 2r(1-r^2) \ellipticK(r)
+ (2-3r^2)(1-r^2) \ellipticK'(r) + \frac{r(1-r^2)^2 \ellipticK'(r)^2}{\ellipticK(r)}\Big).$$
With these we can compute for example that for small $r$ we have
$$\overline{M}(r) = -\frac{1}{2} \,r^{2}-\frac{35}{64} \,r^{4} \mathrm{O}(r^{8}),$$
while for $r\approx 1$ we have
$$  \overline{M}(r) = -\frac{1}{\pi^2}
\Big( \frac{2}{1-r}+4 \big[ \ln{(1-r)} + \tfrac{15}{4}-3\ln{2}\big]^2+ O(1).$$ The graph plotted in Figure \ref{Mplot} shows that $\overline{M}(r)$ is always negative for $r\in (0,1)$, and thus so is $M(r)$. Now examining the index formula \eqref{indexQversion} for variations $\xi$ supported in $[0,R]$, we  see that it is positive-definite since $M(r)\le 0$, and thus no such variations can detect conjugate points.
\end{proof}

\begin{figure}[!ht]
\centering
\includegraphics[scale=0.5]{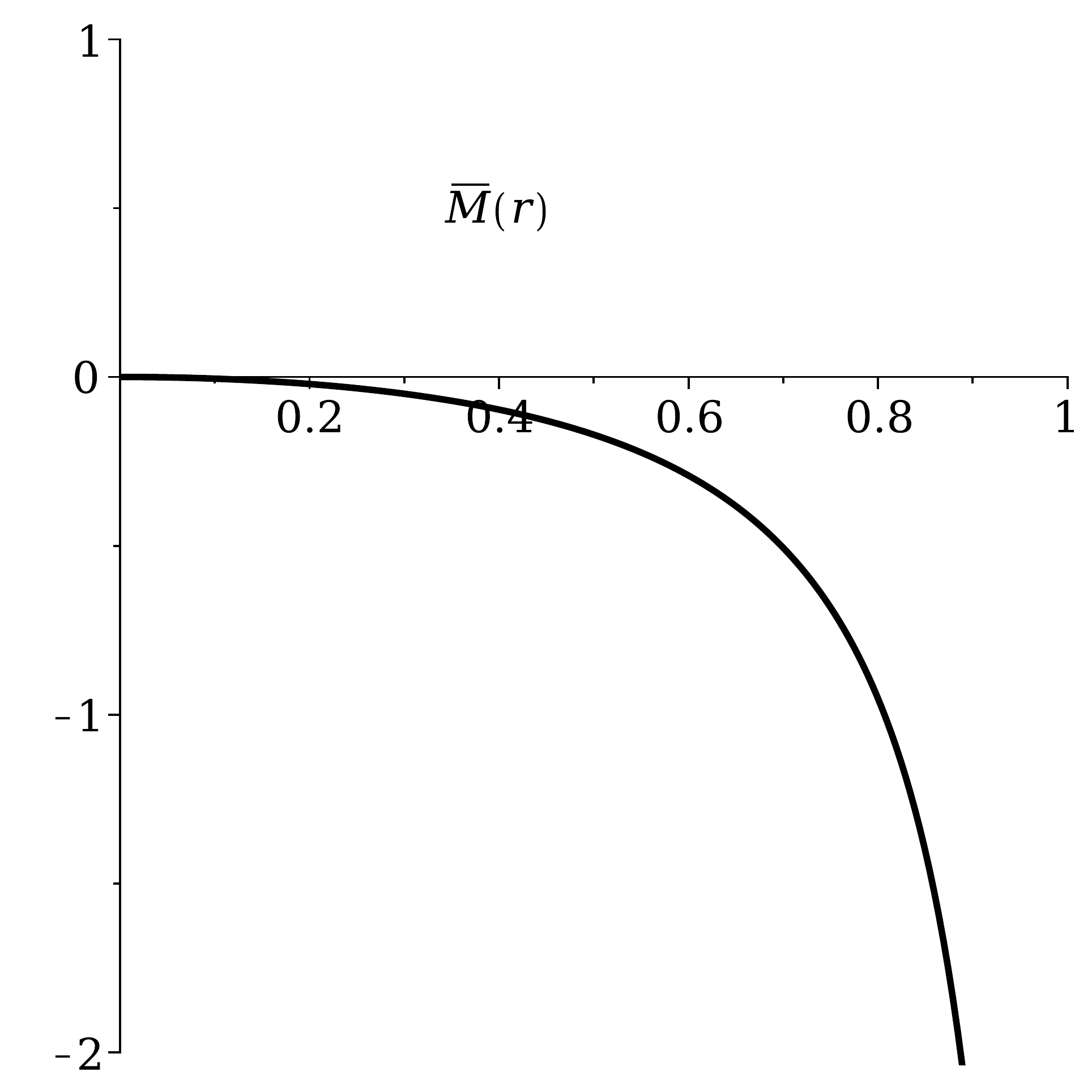}
\caption{A plot of the simplified upper bound $\overline{M}(r)$ from \eqref{Mdef}, for the quantity appearing in \eqref{indexQversion}. Negativity of this function everywhere implies that the index form is positive definite on variations supported in a single cell of any Kolmogorov flow on the torus.}\label{Mplot}
\end{figure}

Obviously there \emph{are} conjugate points along Kolmogorov flows, as found by Misio{\l}ek~\cite{misiolekconjugate} and Drivas et al.~\cite{drivasmisiolek}, so it must be that the variation fields \emph{cannot} be supported in a single cell. To use the computations in this paper to prove a general theorem on existence of conjugate points along Kolmogorov flows, one would need to patch together variation fields in different cells which are \emph{not} assumed to vanish on the boundary. The tricky bit here is that what seem like very well-behaved stream functions on the torus look rather bad in our polar coordinates, since the polar coordinates themselves degenerate on the cell boundaries. This difficulty seems quite surmountable, but would take us too far afield here, so we leave it for future work.

We thus find that for Kolmogorov flows on the flat torus, the method proposed in this paper does not improve upon the original M-criterion for detecting conjugate points. So instead we will apply that method in the Appendix to find a few more examples.

\section{Examples}\label{examplesection}

In this section we will work out some details for explicit examples where the criteria of Theorem \ref{mainindexthm} and its corollaries produce conjugate points.

First we give an example of Arnold-stable and nonisochronal flows with conjugate points, answering a question posed by Drivas et al.~\cite{drivasmisiolek}. Of course the isochronal flow on the same constant curvature hyperbolic disc will also be an Arnold-stable flow with conjugate points.

\begin{example}\label{hyperbolicdisc}
Consider the hyperbolic disc with metric $ds^2 = dr^2 + \sinh^2{r} \, d\theta^2$ for $r\in [0,R]$, for $R=\ln{\rho}$ with $\rho>1$.
Choose $F(r) = \frac{1}{2} \cosh^2{r}$, so that $u(r) = \cosh{r}$ and
$$\omega(r) = \frac{1}{\varphi(r)} \, \frac{d}{dr} \big( \phi(r)^2 u(r)\big) = 3\cosh^2{r} - 1 = 6F(r) - 1.$$
Then $\frac{\omega'(r)}{F'(r)} = 6$, which implies that $U$ satisfies the nonlinear Arnold stability criterion for steady 2D Euler flows.
The function $v$ from \eqref{vdef} is given by $v(r) = \cosh^2{r}$.

Choose test function $\xi(r) = \sinh{r} \sinh{(R-r)}$. Then the index given by \eqref{indexVversion} can be computed explicitly and gives
\begin{multline*}
I(\alpha,\rho) =
\frac{\left(\rho -1\right)^{4}}{3360\rho^5} \, \Big( 84 \rho^{2} \left(\rho^{2}+8 \rho +1\right) \alpha^{2}-105 \rho  (\rho +1)^4 \alpha \\
+ 23 \rho^6+92 \rho^5+181 \rho^4+248 \rho^3+181 \rho^2+92 \rho +23\Big).
\end{multline*}
This is minimized at
$$ \alpha = \frac{5(\rho +1)^4}{8 \rho(\rho^2+8 \rho +1)},$$
and for this $\alpha$ we obtain
$$ I = -\frac{(157 \rho^{4}+412 \rho^{3}+366 \rho^{2}+412 \rho +157) (\rho -1)^8}{53760 \rho^5 (\rho^{2}+8 \rho +1)},$$
which is obviously negative, so we get a conjugate point regardless of the radius $R=\ln{\rho}$.
\end{example}

Here we give an example of how to use the criterion from Corollary \ref{indexthmQversion} to select an appropriate test function $\xi$.

\begin{example}
Consider the standard $2$-sphere with $R=\pi$ and $\varphi(r) = \sin{r}$. Set $u(r) = \frac{7}{4}+4\cos{r} + \cos^2{r}$, which has $u'(r)<0$ for $r\in (0,\pi)$.
We compute that
$$ Q(r) := \frac{1}{u'(r)} \frac{d}{dr} \big( \phi'(r) u(r)\big) = \frac{7+32\cos{r}+12\cos^2{r}}{16+8\cos{r}},$$
which leads to $M$ defined by \eqref{Qconditioncurvature} being given by
$$ M(r) = \frac{3}{64} \, \frac{80\cos^4{r}+384\cos^3{r} + 496\cos^2{r}-
64\cos{r}-221}{(2+\cos{r})^2}.$$
This function $S$ is positive on $[0,0.3\pi]$ and again on $[0.83\pi,\pi]$ as shown in Figure \ref{example1fig}.
Choose
$$ \xi(r) = \begin{cases} \sin{4r} & 0\le r\le \frac{\pi}{4}, \\
0 & \text{otherwise.}\end{cases}$$
Then the index form \eqref{alphacondition} is given by
\begin{equation}\label{indexsphereexample}
I(\alpha) = \int_0^{\pi/4} \big[ u(r)-\alpha\big]^2 \Big[ \sin{r} \Big( \xi'(r) - \frac{Q(r)}{\varphi(r)} \xi(r)\Big)^2 -\frac{M(r)}{\sin{r}} \, \xi(r)^2\Big]  \, dr,
\end{equation}
and we can smooth $\xi$ out to a $C^{\infty}$ function at the expense of an arbitrarily small increase in $I$. As a function of $\alpha$, the index $I$ is given by
$$ I(\alpha) \approx 3.549
+\alpha^2 - 38.57\alpha + 104.2,$$
so that in particular $I(5.431) \approx -0.5635$; see the graph in Figure \ref{example1fig}. Thus $\eta(T)$ is eventually conjugate to the identity for sufficiently large $T$.
\end{example}

\begin{figure}[!ht]
\centering
\includegraphics[scale=0.4]{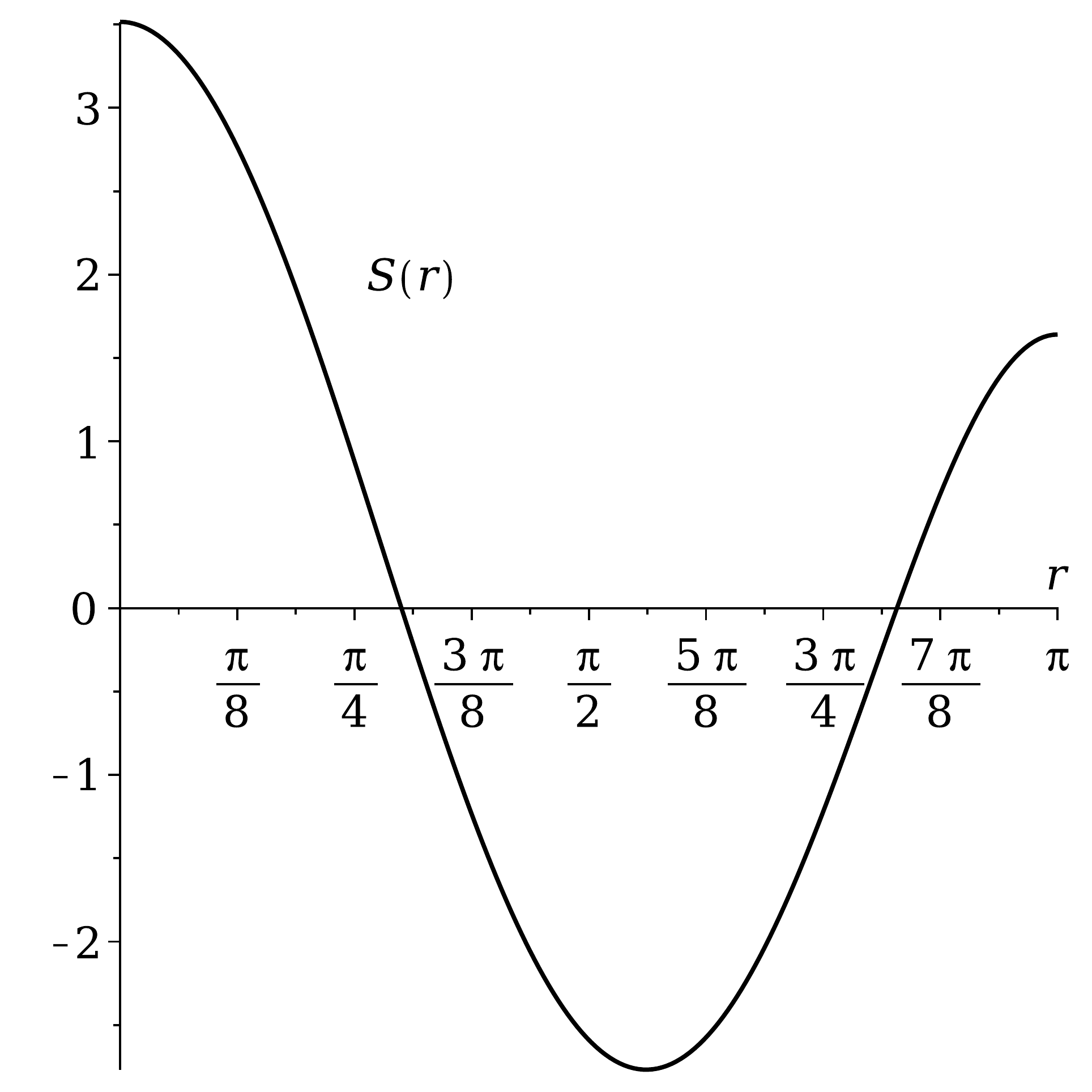} \qquad \includegraphics[scale=0.38]{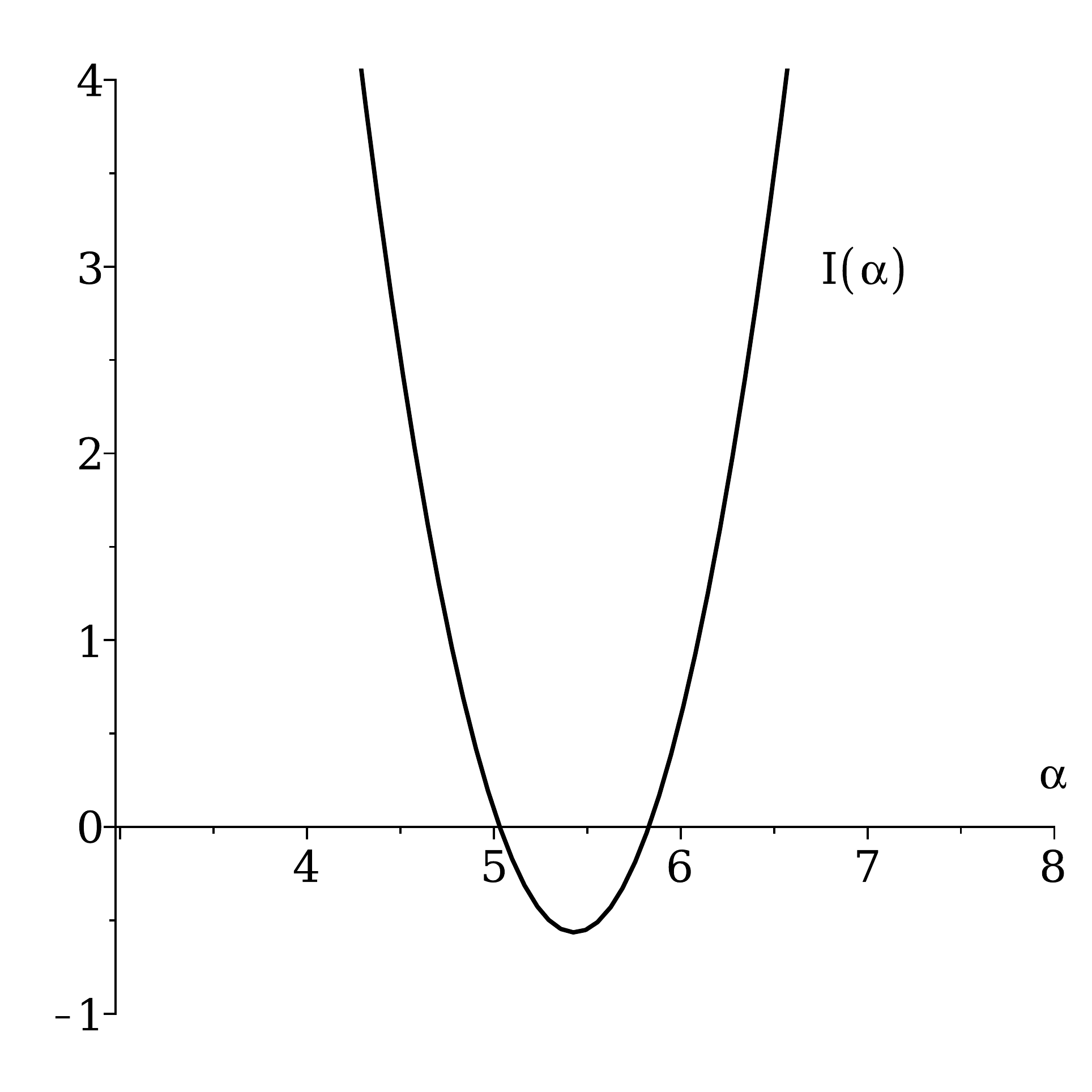}
\caption{On the left, the function $S = \varphi Q' + Q^2 - 1$, for $u$ defined on the round $2$-sphere by $u(r) = \frac{7}{4}+4\cos{r} + \cos^2{r}$. In particular it is positive on $[0,\frac{\pi}{4}]$, which is the support of the perturbation $\xi$.
On the right, the index form $I$ defined by \eqref{indexsphereexample}, considered for the function $\xi(r) = \sin{4r}$ on $[0,\tfrac{\pi}{4}]$ as a function of $\alpha$. }\label{example1fig}
\end{figure}

Next we give an example applying the local criterion of Corollary \ref{uprimecorollary}, in the case where $u$ has a local minimum in the interior.

\begin{example}
If $\varphi(r) = \sin{r}$ and $u(r) = \frac{9}{8} - \sqrt{2}\cos(r) + \cos^2(r)$, then $u'(\tfrac{\pi}{4})=0$ and
$\dfrac{u(\tfrac{\pi}{4})}{u''(\tfrac{\pi}{4})} = \frac{5}{8} > \frac{9}{16}$.
We compute that
$$\omega(r) = -3 \sqrt{2}\, \cos^2{r}+4 \cos^3{r}+\sqrt{2}+\tfrac{1}{4} \cos{r}.$$

If we choose $\epsilon=\delta=\frac{1}{10}$ in the notation of the proof of Corollary \ref{uprimecorollary}, and set
$$\xi(r) = \frac{\frac{\pi}{4}+\tfrac{1}{10}-r}{\lvert r-\frac{\pi}{4}\rvert^{\frac{7}{5}}} \qquad \text{for $r\in [\tfrac{\pi}{4}, \tfrac{\pi}{4}+\tfrac{1}{10}$,}$$
and zero otherwise, then  the integrand in \eqref{alphacondition} can be expressed in a series as
\begin{multline*}
\varphi(r) \left( \frac{d}{dr}\Big[ \big(\alpha - u(r))\xi(r)\Big]\right)^2 + \frac{1}{\varphi(r)} \big( \alpha - u(r)\big)^2 \xi(r)^2 - \omega'(r)\big( \alpha - u(r)\big) \xi(r)^2 = \\
-\frac{7 \sqrt{2}}{40000 (r-\frac{\pi}{4})^{\frac{4}{5}}}+\frac{51 \sqrt{2}\, \left(r-\frac{\pi}{4}\right)^{\frac{1}{5}}}{10000} + \cdots,
\end{multline*}
and this is integrable on $[\tfrac{\pi}{4},\tfrac{\pi}{4}+\frac{1}{10}]$, giving a negative value.

The variation field here is supported in a very small neighborhood of the circle $r=\tfrac{\pi}{4}$ on the sphere but approaches infinity as $r\to \tfrac{\pi}{4}$. Setting $\tilde{\xi}(r)$ to be the even extension of $\xi(r)$ around $r=\tfrac{\pi}{2}$, with $\tilde{\xi}(r) = \xi(\tfrac{\pi}{2}+\beta)$ for $\lvert r-\tfrac{\pi}{2}\rvert \le \beta$ for an even smaller $\beta$, will make this function finite and piecewise $C^1$, while still yielding a negative integral for \eqref{alphacondition}.
\end{example}

Finally we present an example of applying Corollary \ref{uprimeorigincorollary}, using also Lemma \ref{polarcoordsconverse} to demonstrate how this all can work in the non-rotationally symmetric case.

\begin{example}
Suppose $\varphi(r)=r$, $G(r) = r^2-\frac{r^4}{8}$, and $u(r) = 5+\tfrac{1}{2} r^2$. Let $\eta(r,\theta) = r^6 \cos{2\theta}$. By formulas \eqref{generalmetriccomponents}, we have
\begin{equation}\label{nonflatcomponents}
\begin{split}
g_{11} &= \frac{1152 r^8 \cos^2{2\theta} + 8 (r^2+10)^2}{(r^2+10)
(80 -r^4- 32r^2 \sin{2 \theta}-2 r^{2})} \\
g_{12} &= \frac{12r^5 \cos{2\theta}}{10+r^2} \\
g_{22} &= \frac{r^2(80 -r^4- 32r^2 \sin{2 \theta}-2 r^2)}{8(10+r^2)}.
\end{split}
\end{equation}
We easily verify that $g_{11}(0,\theta)=1$ so that $E(0)=1$, that
$ g_{11}g_{22}-g_{12}^2 = r^2$, and that the vorticity of $U$ is given by
$ \omega = 10r - \tfrac{1}{2} r^3 - \tfrac{3}{8} r^5$.

Since $E(0)=1$, $G''(0) = 2$, $G^{iv}(0) = -3$, $u(0)=5$, $u''(0)=1$, $\phi'(0)=1$, and $\phi'''(0)=0$, the condition \eqref{originextremecondition} becomes
$$ 3 + 24 < \frac{10}{1} \cdot \big(0 - (-3)\big),$$
which is of course satisfied. Hence there is a conjugate point along this geodesic, and the variation test field is supported in an arbitrarily small neighborhood of the origin.

To get a sense of the geometry of this very nonflat surface, we plot the sectional curvature on the disc of radius $R=1$, as computed in Maple; in particular it is positive near the origin but takes on both signs as we move away.
\begin{figure}[!ht]
\centering
\includegraphics[scale=0.5]{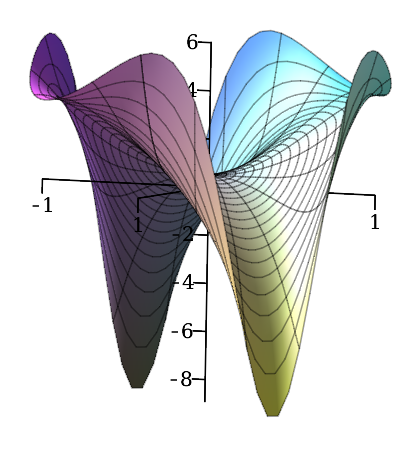}
\caption{The sectional curvature for the metric given by the components \eqref{nonflatcomponents} on the disc $r\le 1$. The steady flow velocity profile has a local minimum at the origin, and satisfies the condition of Corollary \ref{uprimeorigincorollary} to have an energy-reducing variation supported near the origin, but the surface itself does not have any rotational symmetry.}
\end{figure}

\end{example}

\appendix
\section{Appendix: The M-criterion for Kolmogorov flows on $\mathbb{T}^2$}\label{appendix}

As noted in Section \ref{kolmosection}, the criterion of Theorem \ref{mainindexthm} is not able to detect conjugate points along Kolmogorov flows on $\mathbb{T}^2$, where the stream functions are given by $f(x,y) = -\cos{mx}\cos{ny}$ for $m,n\in \mathbb{N}$. (Because of the symmetry between $x$ and $y$, there is no loss of generality in assuming $m\ge n$, which we will do throughout.)

In this case the Misio{\l}ek criterion becomes, using Corollary \ref{indexformcoro2D} with a stream function of the form $g(t,x,y) = \sin{\tfrac{t}{T}} \zeta(x,y)$, that
\begin{equation}\label{MisTorus}
\int_{\mathbb{T}^2} \lvert \nabla \phi\rvert^2 - (m^2+n^2) \phi^2 \, dx\,dy < 0, \qquad \text{where } \phi = \{f, \zeta\}
\end{equation}
for some function $\zeta\colon \mathbb{T}^2\to \mathbb{R}$.

The result of Drivas, Misio{\l}ek, Shi, and Yoneda~\cite{drivasmisiolek} can be stated as follows.
\begin{theorem}[DMSY]
If $(m,n)\in\mathbb{N}^2$ with $n\ge 2$ and $m>\frac{3n^2+6}{\sqrt{3}n}$, then the test function $\zeta(x,y) = \cos{(mx+y)}\cos{ny}$ gives a negative value in \eqref{MisTorus} on the stream function $f(x,y) = -\cos{mx}\cos{ny}$. Hence the corresponding geodesic eventually has a conjugate point.
\end{theorem}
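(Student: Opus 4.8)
The plan is to evaluate the quadratic form in \eqref{MisTorus} directly for the given $\zeta$, exploiting that on the flat torus everything in sight is a trigonometric polynomial. First I would compute $\phi=\{f,\zeta\}=f_x\zeta_y-f_y\zeta_x$ (the area form being $dx\wedge dy$) with $f=-\cos mx\cos ny$ and $\zeta=\cos(mx+y)\cos ny$. After differentiating and using the identity $\cos mx\,\sin(mx+y)-\sin mx\,\cos(mx+y)=\sin y$ together with the product-to-sum formulas for $\cos^2 ny$, $\sin ny\cos ny$, and the remaining products, $\phi$ collapses to a sum of six pure cosine modes, with wave vectors $(0,1)$, $(0,2n-1)$, $(0,2n+1)$ and $(2m,1)$, $(2m,1-2n)$, $(2m,1+2n)$, each with an explicit amplitude proportional to $m$. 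In particular $\phi$ is not identically zero, so the criterion is nonvacuous.

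The next step is to observe that for $n\ge 2$ these six wave vectors are pairwise non-proportional — this is the first place the hypothesis $n\ge 2$ enters, since for $n=1$ the mode $(0,2n-1)$ coincides with $(0,1)$ — so the corresponding cosines are mutually $L^2(\mathbb{T}^2)$-orthogonal and the form in \eqref{MisTorus} splits as a sum over modes. A single mode $a\cos\big(k\cdot(x,y)\big)$ contributes a positive multiple of $a^2\big(|k|^2-(m^2+n^2)\big)$, so $\int_{\mathbb{T}^2}\big(|\nabla\phi|^2-(m^2+n^2)\phi^2\big)$ becomes a weighted sum of the six numbers $|k_j|^2-(m^2+n^2)$. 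Collecting the terms and factoring out $m^2$, this sum is a positive multiple of
$$ (12-8n^2)\,m^2 + 24n^4 + 44n^2 + 12. $$
For $n\ge 2$ the leading coefficient $12-8n^2$ is negative, so \eqref{MisTorus} holds exactly when $m^2 > \dfrac{(3n^2+1)(2n^2+3)}{2n^2-3}$.

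It then remains to check that the stated hypothesis $m>\frac{3n^2+6}{\sqrt 3\,n}$, i.e.\ $m^2>\frac{3(n^2+2)^2}{n^2}$, implies this. For $n\ge 3$ this is the elementary inequality $3(n^2+2)^2(2n^2-3)\ge n^2(3n^2+1)(2n^2+3)$, which reduces to $4n^4-15n^2-36\ge 0$ and holds for all integers $n\ge 3$; the single remaining case $n=2$ follows because the hypothesis then forces the integer $m$ to be at least $6$, whereas the threshold $\frac{(3n^2+1)(2n^2+3)}{2n^2-3}$ equals $\frac{143}{5}<36$ there. Once \eqref{MisTorus} is negative, the M-criterion (Theorem \ref{misiolektheorem}, in the form given by Corollary \ref{indexformcoro2D}) produces a conjugate point along the geodesic.

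The only real work is bookkeeping: carrying out the trigonometric expansion of $\{f,\zeta\}$ correctly and tracking the six amplitudes, since a slip there changes the final polynomial and hence the threshold. Conceptually there is no obstacle — the problem is finite-dimensional once one passes to Fourier modes — and the role of the hypothesis $n\ge 2$ is transparent: it is needed both to keep the six modes distinct and to make $12-8n^2<0$ so that a sufficiently large $m$ can win.
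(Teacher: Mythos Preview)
The paper does not prove this theorem; it is quoted from Drivas--Misio{\l}ek--Shi--Yoneda \cite{drivasmisiolek} and stated without proof, so there is no argument in the paper to compare against.

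Your computation is correct. Expanding $\phi=\{f,\zeta\}$ indeed yields the six cosine modes with the amplitudes you describe, and summing the contributions gives the index as a positive multiple of $(12-8n^2)m^2+24n^4+44n^2+12$. The factorisation $24n^4+44n^2+12=4(3n^2+1)(2n^2+3)$ and the resulting threshold $m^2>\frac{(3n^2+1)(2n^2+3)}{2n^2-3}$ are right, and your verification that the stated hypothesis $m^2>\frac{3(n^2+2)^2}{n^2}$ implies this --- via the inequality $4n^4-15n^2-36\ge 0$ for $n\ge 3$, together with the integrality argument at $n=2$ --- is clean and correct.

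One small correction: the six wave vectors are \emph{not} pairwise non-proportional --- for instance $(0,1)$, $(0,2n-1)$, and $(0,2n+1)$ are all parallel. What you actually need, and what you are in fact using, is that they are pairwise distinct modulo sign, i.e.\ $k_i\ne\pm k_j$ for $i\ne j$; that is the orthogonality condition for the functions $\cos(k\cdot(x,y))$ on $\mathbb{T}^2$. This distinctness holds for $n\ge 2$ exactly as you say, so only the terminology needs adjusting.
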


In this appendix, we will fill out some of the diagram of Kolmogorov flows known to eventually have conjugate points, as shown in Figure \ref{kolmodiagram}.

\begin{figure}[!ht]
\centering
\includegraphics[scale=0.5]{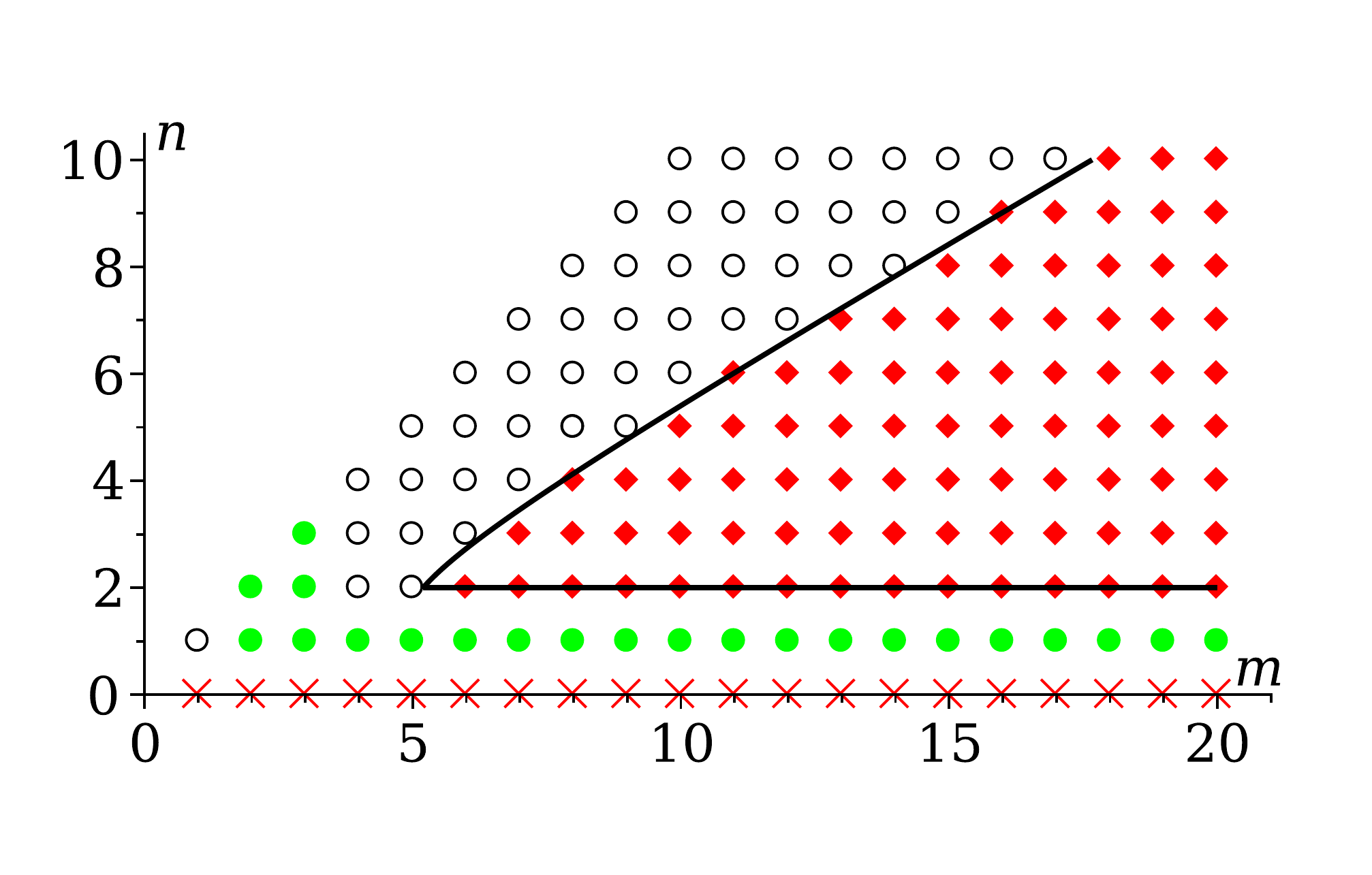}
\caption{Here we display the values of $(m,n)$ with $m\ge n$ for which the Kolmogorov flow $f(x,y) = -\cos{mx}\cos{ny}$ is known to have conjugate points. The diamond shapes in red,enclosed in the region $n\ge 2$,  $m>\frac{3n^2+6}{\sqrt{3}n}$, are those found by Drivas et al.~\cite{drivasmisiolek}. The red X marks when $n=0$ are known to have no conjugate points. The green circles are the new ones found here, while all unfilled circles are still unknown.}\label{kolmodiagram}
\end{figure}

\begin{theorem}\label{newkolmothm}
The geodesics corresponding to Kolmogorov flows $f(x,y) = -\cos{mx}\cos{ny}$ eventually have conjugate points if $n=1$ and $m\ge 2$.
In addition the pairs $(2,2)$, $(3,2)$, and $(3,3)$ also give geodesics with eventual conjugate points.
\end{theorem}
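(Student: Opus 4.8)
The plan is to apply the Misio{\l}ek criterion \eqref{MisTorus} directly, exploiting that on the flat torus the Laplacian is diagonalized by the Fourier basis. Writing a candidate perturbation stream function as a trigonometric polynomial $\zeta$ and setting $\phi=\{f,\zeta\}$ (again a trigonometric polynomial, since $\{e^{i(ax+by)},e^{i(cx+dy)}\}$ is proportional to $e^{i((a+c)x+(b+d)y)}$ with coefficient $bc-ad$), the inequality \eqref{MisTorus} becomes the purely algebraic condition
\[
\sum_{(j,k)\in\mathbb{Z}^2}\big(j^2+k^2-(m^2+n^2)\big)\,\lvert\widehat{\phi}_{j,k}\rvert^2<0,
\]
i.e.\ the spectral mass of $\phi$ must be concentrated on lattice points strictly inside the circle of radius $\sqrt{m^2+n^2}$. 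The stream function $f=-\cos mx\cos ny$ has its four Fourier modes $(\pm m,\pm n)$ sitting exactly on that circle, and $\{f,\cdot\}$ translates each of them by a lattice vector; a short argument with the triangle inequality shows that no single-frequency $\zeta$ can keep all of the resulting modes inside the circle, so one must arrange destructive cancellation among the ``outward'' modes while preserving ``inward'' ones.

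For the infinite family $n=1$, $m\ge 3$ I would use the explicit test function $\zeta(x,y)=\cos((m-1)x)\cos y$. A direct computation of the bracket gives $\phi=\{f,\zeta\}=-\tfrac14\sin 2y\,\big[\sin((2m-1)x)+(2m-1)\sin x\big]$, whose only modes lie at $(\pm(2m-1),\pm 2)$ and $(\pm 1,\pm 2)$, with amplitude ratio $1:(2m-1)$. Plugging these into the algebraic inequality above reduces the M-criterion to
\[
(2m-1)^2\big(m^2-4\big)>3m^2-4m+4,
\]
which is elementary for every integer $m\ge 3$ (there $m^2-4\ge 5$, and $5(2m-1)^2=20m^2-20m+5>3m^2-4m+4$ for all $m\ge 1$). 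This settles $(m,1)$ for $m\ge 3$; the borderline value $m=2$ makes the left side vanish, so $(2,1)$ is treated together with the sporadic cases.

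The remaining pairs $(2,1)$, $(2,2)$, $(3,2)$, $(3,3)$ I would handle by a finite-dimensional computation. For each, fix a small space $V$ of trigonometric polynomials $\zeta$ --- taking the Drivas--Misio{\l}ek--Shi--Yoneda-type ansatz $\zeta=\cos(mx+y)\cos ny$, augmented if necessary by one or two more modes with free coefficients so as to break the $x\mapsto-x$, $y\mapsto-y$ symmetry of $f$ --- compute $\phi=\{f,\zeta\}$, and write the quadratic form $Q(\zeta)=\int_{\mathbb{T}^2}\lvert\nabla\phi\rvert^2-(m^2+n^2)\phi^2\,dx\,dy$ as a small symmetric matrix with rational entries. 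One then exhibits a vector on which $Q$ is negative, equivalently checks that this matrix has a negative eigenvalue; since everything is rational this is a finite verification. Because the DMSY sufficient inequality is not tight for the chosen ansatz, the exact value of $Q$ can still be negative for these small $(m,n)$ even though $m$ lies below the DMSY threshold.

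The main obstacle is exactly step three: for small $(m,n)$ the resonant modes of $f$ lie on the critical circle, and the symmetries of $f$ annihilate many of the would-be ``inward'' modes of $\{f,\zeta\}$ (for instance the $(1,1)$ output for $(m,n)=(2,2)$ vanishes because $(2,2)$ and $(1,1)$ are collinear), so naive separable choices of $\zeta$ all give $Q\ge 0$; one genuinely has to search for a symmetry-breaking test function and then verify that the resulting indefinite quadratic form takes a negative value. Once such a $\zeta$ is in hand the rest is mechanical. I would also note, consistently with Theorem \ref{kolmocase}, that no admissible $\zeta$ can be supported in a single cell, so the test functions produced here necessarily spread over the whole torus.
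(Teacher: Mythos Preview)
Your overall strategy---apply the M-criterion \eqref{MisTorus} and produce explicit trigonometric test functions $\zeta$---is exactly the paper's. Your treatment of the family $n=1$, $m\ge 3$ is correct and in fact uses a different test function than the paper: you take $\zeta=\cos((m-1)x)\cos y$, and your computation of $\phi=\{f,\zeta\}$ and of the resulting inequality $(2m-1)^2(m^2-4)>3m^2-4m+4$ is right. The paper instead uses $\zeta_{m1}(x,y)=\cos x\,(4+\cos 2y)$, for which the index evaluates to $\tfrac{\pi^2}{4}(29-12m^2)$; this has the advantage of covering $m=2$ in the same stroke, whereas your choice degenerates there.

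The genuine gap is in the sporadic cases $(2,1)$, $(2,2)$, $(3,2)$, $(3,3)$. You outline a finite-dimensional search over a small space of trigonometric polynomials and say ``one then exhibits a vector on which $Q$ is negative,'' but you do not exhibit it, and you yourself identify this as ``the main obstacle.'' That is not a detail to be filled in later: producing the test function \emph{is} the proof, and the starting ansatz you suggest (the DMSY form $\cos(mx+y)\cos ny$) is known to fail precisely for these pairs. In the paper the required $\zeta$'s are written down explicitly and the index computed exactly; for instance the $(3,2)$ case requires a combination of fifteen modes up to frequency $(12,11)$, with coefficients tuned so finely that most cannot be perturbed to a different integer without destroying negativity. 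There is no reason to expect a one- or two-mode augmentation of the DMSY ansatz to succeed, so as written your argument for these four pairs is incomplete.
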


\begin{proof}
All we have to choose a test function $\zeta$ to make the M-criterion \eqref{MisTorus} satisfied in each case.

For $f(x,y) = -\cos{mx} \cos{y}$,
choose
$$ \zeta_{m1}(x,y) = \cos{x}(4+\cos{2y}).$$
Then
$$ \phi:= \{f,g\} =
2 m \sin{m x} \cos{y} \sin{2 y} \cos{x}+ \cos{m x}\sin{x} \sin{y} (4+\cos{2 y}),$$
and the index is
$$ \iota(\zeta,\zeta) = \iint_{\mathbb{T}^2}  \lvert \nabla \phi\rvert^2 - (m^2+1) \phi^2 \, dA = \frac{\pi^2(29-12m^2)}{4},$$
which is negative for all integers $m\ge 2$.

For $f(x,y) = -\cos{2x}\cos{2y}$, we choose
\begin{multline}
\zeta_{22}(x,y) = 235(\cos{x} + \cos{y}) - 27(\cos{3x} + \cos{3y})  -9(\cos{5x} + \cos{5y}) \\
 - 5(\cos{4x}\cos{5y} + \cos{5x}\cos{4y}) - 10(\cos{3x}\cos{4y} + \cos{4x}\cos{3y}).
\end{multline}
We then compute that if $\phi = \{f,\zeta\}$, then the index \eqref{MisTorus} is
$$ \iota(\zeta,\zeta) = \iint_{\mathbb{T}^2} \lvert \nabla \phi\rvert^2 - 8 \phi^2 \, dA = -854 \pi^2.$$

Finally for $f(x,y) = -\cos{3x}\cos{2y}$, choose
\begin{multline}
\zeta_{32}(x,y) = (1320\cos{6x} + 235\cos{12x})\cos{y} +
(1000 + 1060\cos{6x}+190\cos{12x} )\cos{3y}\\
+ (550 + 650\cos{6x} + 125\cos{12x})\cos{5y}
+ (195 + 270\cos{6x} + 63\cos{12x})\cos{7y} \\
+ (50 + 73\cos{6y} + 22\cos{12y})\cos{9y} +
(10 + 10\cos{6x} + 3\cos{12x})\cos{11y}.
\end{multline}
Then we obtain
$$ \iota(\zeta,\zeta) = \iint_{\mathbb{T}^2} \lvert \nabla \phi\rvert^2 - 13 \phi^2 \, dA = - \frac{764865\pi^2}{8}.$$

For $f(x,y) = -\cos{3x}\cos{3y}$, choose
$$
\zeta_{33}(x,y) = 1000\cos{x} - 18\cos{5x}\cos{6y} - 42\cos{5x} - 20\cos{7x} - 11\cos{7x}\cos{6y}.
$$
Then the index is
$$ \iota(\zeta,\zeta) = \iint_{\mathbb{T}^2} \lvert \nabla \phi\rvert^2 - 18 \phi^2 \, dA = -51939\pi^2.$$
\end{proof}

Once the test functions are guessed, verifying that the index is negative is straightforward. We can find them by searching in the space of Fourier coefficients, but especially as seen from the $(3,2)$ case, this starts to require many modes beyond those we start with. We have scaled the variation fields so that all the coefficients are integers (which allows us to compute the integrals exactly rather than numerically). The coefficients are quite sensitive: most of the ones in the $(3,2)$ case cannot be changed to a different integer without making the index positive. All computations here were done in Maple 2021. In Figure \ref{kolmofigure} we plot the variation fields whose formulas are given in the proof of Theorem \ref{newkolmothm}.

\begin{figure}[!ht]
\centering
\includegraphics[scale=0.4]{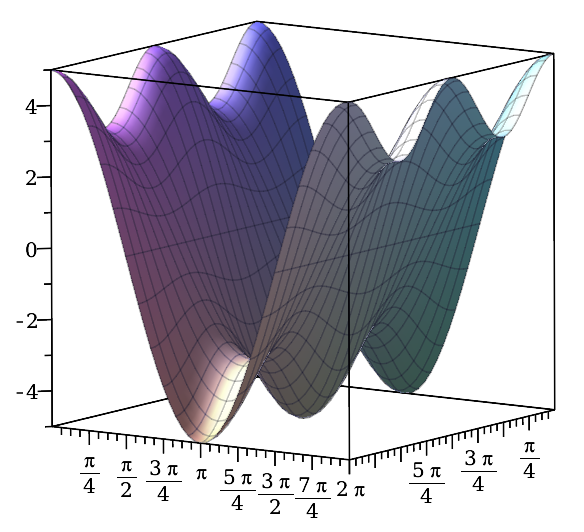} \qquad \includegraphics[scale=0.4]{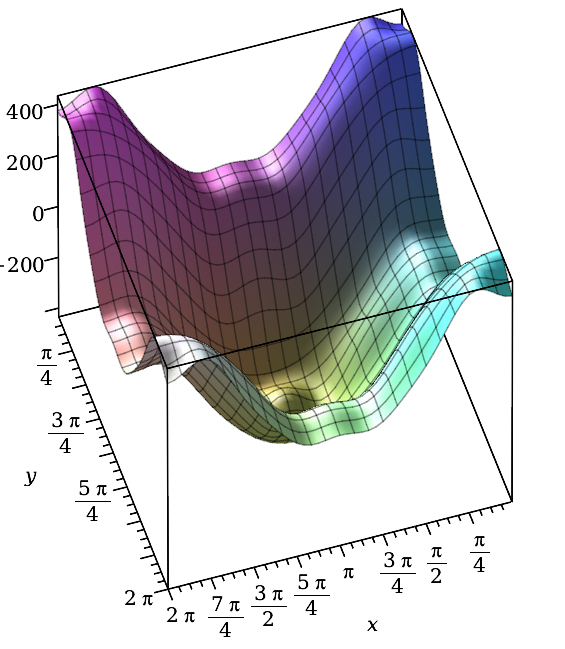} \\
\includegraphics[scale=0.4]{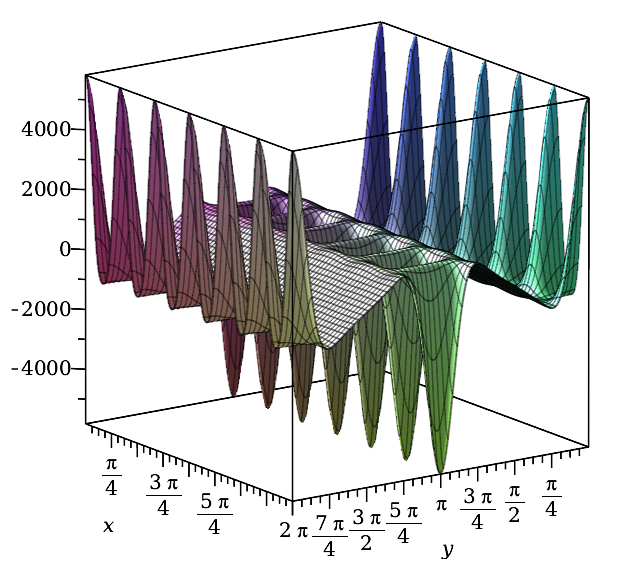} \qquad
\includegraphics[scale=0.4]{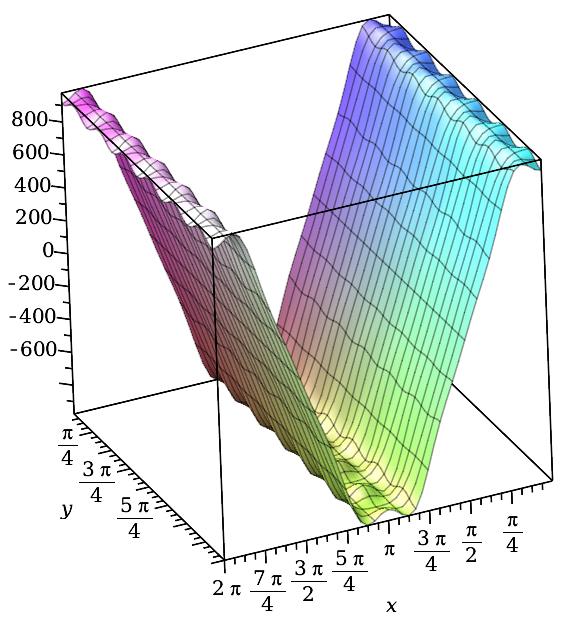} \\
\caption{Variation fields which eventually reduce the energy along specified Kolmogorov flows determined by parameters $(m,n)$. Top row: $(2,1)$ and $(2,2)$. Bottom row: $(3,2)$ and $(3,3)$.}\label{kolmofigure}
\end{figure}


\begin{thebibliography}{10}

\bibitem{Arn1966} V.I. Arnold, \emph{On the differential geometry of infinite-dimensional Lie groups and its application to the hydrodynamics of perfect fluids}, in Vladimir I. Arnold---collected works. Vol. II. Hydrodynamics, bifurcation theory, and algebraic geometry 1965--1972. Editors Givental and B.A. Khesin and A.N. Varchenko and V.A. Vassiliev and O.Ya. Viro. Springer-Verlag, Berlin, 2014.
\bibitem{AK1998} V. Arnold and B. Khesin, \emph{Topological nethods in hydrodynamics}, second edition, Springer-Verlag, New York, 2021.
\bibitem{benn} J. Benn, \emph{Conjugate points in $\Diffmu(S^2)$}, J. Geom. Phys. \textbf{170} 104369 (2021).
\bibitem{elliptic} P.F. Byrd and M.D. Friedman, \emph{Handbook of Elliptic Integrals for Engineers and Scientists}, Springer-Verlag New York, 1971.
\bibitem{docarmo} M.P. do Carmo, \emph{Riemannian Geometry}, Birkh{\"a}user, Boston 1992.
\bibitem{drivasmisiolek} T. Drivas, G. Misio{\l}ek, B. Shi, and T. Yoneda, \emph{Conjugate and cut points in ideal fluid motion}, Ann. Math. Qu{\' e}. \textbf{46} pp. 207--225 (2022).
\bibitem{EM1970} 	D. Ebin and J. Marsden,
	\textit{Groups of diffeomorphisms and the motion of an incompressible fluid},
	Ann. of Math. \textbf{92} (1970), 102-163.
\bibitem{EMP}	D. Ebin, G. Misio{\l}ek and S.C. Preston,
	\textit{Singularities of the exponential map on the volume-preserving diffeomorphism group},
	Geom. funct. anal. \textbf{16} (2006), 850-868.
\bibitem{gradry} I. S. Gradshteyn and I. M. Ryzhik, \emph{Table of Integrals, Series, and Products}, 8th edition, Academic Press, Cambridge, MA.
\bibitem{lawden} D.F. Lawden, \emph{Elliptic Functions and Applications}, Springer-Verlag, New York, 1989.
\bibitem{majdabertozzi} 	A. Majda and A. Bertozzi,
	\textit{Vorticity and Incompressible Flow},
	Cambridge University Press, Cambridge 2001.
\bibitem{Mis1993} 	G. Misio{\l}ek,
	\textit{Stability of ideal fluids and the geometry of the group of diffeomorphisms},
	Indiana Univ. Math. J. \textbf{42} (1993), 215--235.
\bibitem{misiolekconjugate} 	G. Misio{\l}ek,
	\textit{Conjugate points in $\mathcal{D}_\mu(\mathbb{T}^2)$},
	Proc. Amer. Math. Soc. \textbf{124} (1996), 977-982.
\bibitem{misiolekmorse} G. Misio{\l}ek, \emph{The exponential map near conjugate points in 2D hydrodynamics}, 
Arnold Math. J. \textbf{1} (2015), 243--251.
\bibitem{hardest} 	S.C. Preston,
	\emph{On the volumorphism group, the first conjugate point is always the hardest},
	Commun. Math. Phys. \textbf{267} (2006), 493-513.
\bibitem{nonpositive} S.C. Preston, \emph{Nonpositive curvature on the area-preserving diffeomorphism group}, J. Geom. Phys. \textbf{53} (2005).
\bibitem{prestonstability} S.C. Preston,
\emph{For ideal fluids, Eulerian and Lagrangian instabilities are equivalent},
{\it Geom. Func. Anal.} \textbf{14} no. 5, (2004), 1044--1062.
\bibitem{prestonthesis} S.C. Preston, \emph{Eulerian and Lagrangian stability of fluid motions}, Ph. D. Thesis, SUNY Stony Brook, 2002.
\bibitem{whipschains} S.C. Preston, \emph{The motion of whips and chains}, J. Differential Equations \textbf{3} (2011) 504--550.
\bibitem{wkb} S.C. Preston, \emph{The WKB method for conjugate points in the volumorphism group}, Indiana Univ. Math. J. \textbf{57} no. 7 (2008), 3303--3328.
\bibitem{tauchiyonedaellipsoid} T. Tauchi and T. Yoneda, \emph{Existence of a conjugate point in the incompressible Euler flow on an ellipsoid}, J. Math. Soc. Japan Advance Publication pp. 1--25 (2021).
\bibitem{tauchiyonedaarnold} T. Tauchi and T. Yoneda, \emph{Arnold stability and Misiołek curvature} https://arxiv.org/abs/2110.04680
\bibitem{tauchiyonedacoriolis} T. Tauchi and T. Yoneda, \emph{Positivity for the curvature of the diffeomorphism group corresponding to the incompressible Euler equation with Coriolis force}, Prog. Theor. Exp. Phys. 210043 (2021).
\bibitem{washabaugh} P. Washabaugh and S.C. Preston, \emph{The geometry of axisymmetric ideal fluid flows with swirl}, Arnold Math. J. \textbf{3}(2) (2017), 175--185.
\bibitem{yudovich} V.I. Yudovich, \emph{Non-stationary flow of an ideal incompressible liquid}, USSR Comp. Math. \& Math. Phys. \textbf{3}(6) (1963), 1407--1456.

\end{thebibliography}
\end{document}